\newtheorem{theorem}{Theorem}[section]
\newtheorem{proposition}[theorem]{Proposition}
\newtheorem{lemma}[theorem]{Lemma}
\newtheorem{corollary}[theorem]{Corollary}
\theoremstyle{definition}
\newtheorem{definition}[theorem]{Definition}
\numberwithin{equation}{section}
\newcommand{\CC}{\ensuremath{\mathbb{C}}}
\begin{document}

\baselineskip=15pt

\title[Holomorphic $G$-structure and foliated Cartan geometry]{Holomorphic $G$-structures and foliated Cartan 
geometries on compact complex manifolds}

\author[I. Biswas]{Indranil Biswas}

\address{School of Mathematics, Tata Institute of Fundamental
Research, Homi Bhabha Road, Mumbai 400005, India}

\email{indranil@math.tifr.res.in}

\author[S. Dumitrescu]{Sorin Dumitrescu}

\address{Universit\'e C\^ote d'Azur, CNRS, LJAD, France}

\email{dumitres@unice.fr}

\subjclass[2020]{53C07, 53C10, 32Q57}

\keywords{Holomorphic geometric structure, holomorphic ${\rm GL}(2)$-structure, K\"ahler manifold, foliated 
principal bundle, foliated Cartan geometry}

\date{}
\begin{abstract}
This is a survey paper dealing with holomorphic $G$-structures and holomorphic Cartan geometries on compact complex 
manifolds. Our emphasis is on the foliated case: holomorphic foliations with transverse (branched or generalized) 
holomorphic Cartan geometries.
\end{abstract}

\maketitle

\tableofcontents

\section{Introduction}

In this survey paper we deal with holomorphic geometric structures on compact complex manifolds. We focus on 
holomorphic $G$-structures and on holomorphic Cartan geometries.

This subject of geometric structures has its roots in the study of the symmetry group of various geometric spaces. 
Important seminal work which shaped the subject was done by F. Klein and S. Lie. In particular, in his famous 
Erlangen address (1872), F. Klein established the program of a unifying background for all geometries (including 
the Euclidean, affine and projective geometries) as being that of homogeneous spaces $X\,=\,G/H$ under (the transitive) 
action of a symmetry group $G$; here $G$ is a finite dimensional connected Lie group, and $H$ is the stabilizer of a 
given point in the model space $X$.

The special case of the complex projective line $\CC \text{P}^1$ seen as a homogeneous space for the M\"obius group 
$\text{PSL}(2, {\mathbb C})$ appears naturally in the study of the second order linear differential equations on 
the complex domain. More precisely, the quotient of two local linearly independent solutions of an equation of the 
above type provides a local coordinate in $\CC \text{P}^1$ on which the monodromy of the equation (obtained by 
analytic continuation of the local solution along a closed curve avoiding the singularities) acts by elements in 
the M\"obius group $\text{PSL}(2, {\mathbb C})$. This procedure naturally leads to a {\it complex projective 
structure} on the Riemann surface $M$ bearing the second order linear differential equation, meaning that $M$ 
admits a holomorphic atlas with local coordinates in ${\CC}\text{P}^1$ such that the transition maps are
in $\text{PSL}(2, {\mathbb C})$.
 
Inspired by Fuchs' work on second order linear differential equations, Poincar\'e was the first one to understand 
that the above procedure provides a promising background which could lead to a general uniformization theorem for 
any Riemann surface $M$: one should find a second order linear differential equation on $M$ which is {\it 
uniformizing}, in the sense that, when pull-backed to the universal cover of $M$, the quotient of two local 
solutions provides a biholomorphic identification between the universal cover of $M$ and an open set $U$ in the 
complex projective line $\CC \text{P}^1$. The monodromy of the equation defined on $M$ furnishes a group 
homomorphism from the fundamental group of $M$ into the subgroup of the M\"obius group $\text{PSL}(2, {\mathbb 
C})$ preserving $U \,\subset\, \CC \text{P}^1$ and acting properly and discontinuously on $U$. The above identification 
(between the universal cover of $M$ and $U \,\subset\, \CC \text{P}^1$) is equivariant with respect to the monodromy 
morphism; this identification map is called the {\it developing map}. Consequently, the Riemann surface is 
uniformized as a quotient of $U \,\subset\, \CC \text{P}^1$ by a subgroup of $\text{PSL}(2, {\mathbb C})$ acting 
properly and discontinuously on $U$. Of course, in the generic (hyperbolic case) the open subset $U$ is the upper-half 
plane and the corresponding subgroup in the M\"obius group is a discrete subgroup of $\text{PSL}(2, \mathbb R)$. A 
complete proof based on the above considerations and also other proofs of the uniformization theorem for surfaces 
together with the historical background are presented in \cite{StG} (see also \cite{Gu}).
 
Another extremely important root of the subject is Elie Cartan's broad generalization of Klein's homogeneous model 
spaces to the corresponding infinitesimal notion, namely that of {\it Cartan geometries} (or {\it Cartan 
connections}) \cite{Sh}. Those geometric structures are infinitesimally modeled on homogeneous spaces $G/H$. A 
Cartan geometry on a manifold $M$ is naturally equipped with a curvature tensor which vanishes exactly when $M$ is 
locally modeled on $G/H$ in the following sense described by Ehresmann \cite{Eh}. In such a situation the Cartan 
geometry is called {\it flat}.

A manifold $M$ is said to be {\it locally modeled on a 
homogeneous space $G/H$} if $M$ admits an atlas with charts in $G/H$ satisfying the
condition that the transition maps are given 
by elements of $G$ using the left-translation action of $G$ on $G/H$. In this way $M$ is locally 
endowed with the $G/H$-geometry and all $G$-invariant geometrical features of $G/H$ have 
intrinsic meaning on $M$ \cite{Eh}.

In the same spirit as for complex projective structures on Riemann surfaces, homogeneous spaces are useful models 
for geometrization of topological manifolds of higher dimension. This was Thurston's point of view when he formulated
the geometrization conjecture for threefolds, using three-dimensional Riemannian homogeneous spaces $G/H$ (or 
equivalently, $H$ is compact).

Also Inoue, Kobayashi and Ochiai studied holomorphic projective connections on compact complex surfaces. A 
consequence of their work is that compact complex surfaces bearing holomorphic projective connections also admit 
a (flat) holomorphic projective structure such that the corresponding developing map
is injective. In particular, such complex 
surfaces are uniformized as quotients of open subsets $U$ in the complex projective plane $\CC \text{P}^2$ by a 
discrete subgroup in $\text{PSL}(3, \CC)$ acting properly and discontinuously on $U$ \cite{IKO, KO1,KO2}. Of 
course, most of the complex compact surfaces do not admit any holomorphic projective connection. It appears to be useful 
to allow more flexibility in the form of orbifold (or other mild) singularities.

In this spirit \cite{M1, M2} Mandelbaum introduced and studied {\it branched projective structures} on Riemann 
surfaces. A branched projective structure on a Riemann 
surface is given by some holomorphic atlas whose local charts are finite branched 
coverings of open subsets in ${\mathbb C}\text{P}^1$ while the transition maps lie in 
$\text{PSL}(2, {\mathbb C})$. 

Inspired by the above mentioned articles of Mandelbaum, we defined in \cite{BD} a more general notion of {\it 
branched holomorphic Cartan geometry} on a complex manifold $M$ which is valid also for complex manifolds in higher 
dimension and for non-flat Cartan geometries. We defined and studied a more general notion of {\it generalized 
holomorphic Cartan geometry} in \cite{BD5} (see also, \cite{AM}).

These notions of generalized and branched Cartan geometry are much more flexible than the usual one: branched 
holomorphic Cartan geometry are stable under pull-back through holomorphic ramified maps, while generalized 
holomorphic Cartan geometries are stable by pull-back through holomorphic maps. Also all compact complex 
projective manifolds admit branched flat holomorphic projective structure \cite{BD}.

A foliated version of a Cartan geometry (meaning Cartan geometry transverse to a foliation) was 
worked out in \cite{Bl, Mo}. We defined and studied the notion of a foliated holomorphic branched Cartan geometry 
(meaning a branched holomorphic Cartan geometry transverse to a holomorphic foliation) in \cite{BD3}. We define 
here also the foliated version of generalized Cartan geometry.

We show that these notions are rigid enough to enable one to obtain classification results.

The organization of this paper is as follows. After this introduction, Section 2 presents  the 
geometric notion of holomorphic $G$-structure and  a number of examples. Section 3 focuses on the special case of ${\rm GL}(2,\CC)$-structure 
and ${\rm SL}(2,\CC)$-structure and surveys some classification results. Sections 4 is about holomorphic Cartan 
geometry in the classical case and in the branched and generalized case. Section 5 deals with two equivalent 
definitions of foliated (branched, generalized) Cartan geometry and provides some classification results. At the 
end in Section 6 some related open problems are formulated.

\section{Holomorphic $G$-structure}\label{section G-struct}

Let $M$ be a complex manifold of complex dimension $n$. We denote by $TM$ (respectively, $T^*M$) its holomorphic 
tangent (respectively, cotangent) bundle. Let $R(M)$ be the holomorphic {\it frame bundle} associated to $TM$. We 
recall that for any point $m \,\in\, M$, the fiber $R(M)_m$ identifies with the set of $\CC-$linear maps from 
${\CC}^n$ to $T_mM$. The pre-composition with elements in $\rm{GL}(n, \CC)$ defines a (right) $\rm{GL}(n,
\CC)$--action on $R(M)$. This action is holomorphic, free and transitive in the fibers; therefore the quotient of $R(M)$ 
by $\rm{GL}(n, \CC)$ is the complex manifold $M$. The frame bundle is the typical example of a holomorphic 
principal $\rm{GL}(n, \CC)$--bundle over $M$.

Let $G$ be a complex Lie subgroup of $\rm{GL}(n, \CC)$. Then we have the following definition.

\begin{definition}
A holomorphic $G$-structure on $M$ is a holomorphic principal $G$-subbundle ${\mathcal R}\,\longrightarrow\,M$ in 
$R(M)$. Equivalently, ${\mathcal R}$ is a holomorphic reduction of the structure group of $R(M)$
to the subgroup $G\, \subset\, \rm{GL}(n, \CC)$.
\end{definition} 

It may be mentioned that the above definition is equivalent to the existence of a $\rm{GL}(n, \CC)$-equivariant 
holomorphic map $\Psi\,:\, R(M) \,\longrightarrow \,\rm{GL}(n, \CC)/G$. The pre-image, under $\Psi$, of the class of 
the identity element in $\rm{GL}(n, \CC)/G$ is the holomorphic $G$-subbundle in the above definition. Notice that 
the map $\Psi$ is a holomorphic section of the bundle $R(M)/G$ (with fiber type $\rm{GL}(n, \CC)/G$), where the $G$ 
action on $R(M)$ is the restriction of the principal $\rm{GL}(n, \CC)$-action.

The $G$-structure $\mathcal R\,\subset\, R(M)$ is said to be {\it flat} (or, {\it integrable}) if for every $m \,\in\, M$, 
there exists an open neighborhood $U$ of $0\,\in\, \CC^n$ and a holomorphic local biholomorphism $$f \,:\, (U,\,0) 
\,\longrightarrow \,(M,\,m)$$ such that the image of the differential $df$ of $f$, lifted as a map $df \,:\, R(U) 
\,\longrightarrow\, R(M)$, satisfies $df(s(U)) \,\subset\, \mathcal R$, where $s\, :\, U\, \longrightarrow\,
R(U)$ is the section of the projection $R(U)\, \longrightarrow\, U$ given by the standard frame of $\CC^n$.

For more details about this concept the reader is referred to \cite{Kob}. Let us now describe some important 
examples which are at the origin of this concept.

{\bf ${\rm SL}(n,\CC)$-structure}.\, An ${\rm SL}(n, \CC)$-structure on $M$ is equivalent with giving a non-vanishing 
holomorphic section of the canonical bundle $K_M\,=\,\bigwedge^n T^*M$. This holomorphic section $\omega \,\in\, 
H^0(M,\, K_M)$ defines the holomorphic principal $SL(n, \CC)$-subbundle in $R(M)$ with fiber above $m\,\in\, M$ 
given by the subset ${\mathcal R}_m \,\subset\, R(M)_m$ consisting of all frames $l \,:\, \CC^n\,\longrightarrow\, 
T_mM$ such that $l^*\omega_m$ is the $n$-form on $\CC^n$ given by the determinant. Therefore,
$\mathcal R\,\subset\, R(M)$ is defined by those frames which have volume $1$ with respect to $\omega$.

Recall that with respect to holomorphic local coordinates $(z_1,\, \ldots,\, z_n)$ on $M$, a holomorphic section 
$\omega \,\in\, H^0(M,\, K_M)$ is given by an expression $f(z_1, \ldots,z_n)dz_1 \wedge dz_2\wedge 
\ldots\wedge dz_n$, with $f$ a non-vanishing local holomorphic function. We can always operate a biholomorphic 
changing of local coordinates in order to have $f$ identically equal to $1$. This implies that an $SL(n, 
\CC)$-structure is always flat.

{\bf ${\rm Sp}(2n,\CC)$-structure}.\, This structure is equivalent with giving a holomorphic pointwise non-degenerate 
holomorphic $2$-form on $M$. Such a section defines an isomorphism between $TM$ and $T^*M$ and implies $M$ has even 
dimension $2n$. Equivalently, an ${\rm Sp}(2n,\CC)$-structure is a section $\omega_0 \,\in\,
H^0(M,\, \bigwedge ^2T^*M)$ such that $\omega_0^n$ is a nowhere vanishing section of
$\bigwedge^{2n}T^*M$. The corresponding ${\rm Sp}(2n,\CC)$-structure is 
flat if there exists holomorphic local coordinates with respect to which the local expression of $\omega_0$ is $dz_1 
\wedge dz_2 + \ldots + dz_{2n-1} \wedge dz_{2n}$. By Darboux theorem this is equivalent
to the condition that $d \omega_0\,=\,0$, i.e., 
$\omega_0$ is a holomorphic symplectic form (or equivalently, a holomorphic non-degenerate closed $2$-form).

If $M$ is a compact K\"ahler manifold, then any holomorphic form on $M$ is automatically closed. In this 
case any holomorphic ${\rm Sp}(2n,\CC)$-structure on $M$ is flat: it endows $M$ with a holomorphic symplectic form.

Holomorphic symplectic structures on compact complex non K\"ahler manifolds were constructed in \cite{Gu1,Gu2}.

{\bf ${\rm O}(n,\CC)$-structure.}\, Giving an ${\rm O}(n, \CC)$-structure on $M$ is equivalent to giving a pointwise 
nondegenerate (i.e., of rank $n$) holomorphic section $g$ of the bundle $S^2(T^*M)$ of symmetric complex quadratic 
forms in the holomorphic tangent bundle of $M$. This structure is also known as a {\it holomorphic Riemannian metric} 
\cite{Le,Gh,Du,DZ,BD3}. Notice that the complexification of any real-analytic Riemannian or pseudo-Riemannian metric 
defines a local holomorphic Riemannian metric which coincides with the initial (pseudo-)Riemannian metric on the 
real locus.

The fiber above $m\,\in\, M$ of the corresponding subbundle $\mathcal R_m\,\subset\, R(M)_m$ is formed by all frames $l 
\,:\, \CC^n\,\longrightarrow \, T_mM$ such that $l^*(g_m)\,=\,dz_1^2+ \ldots +dz_n^2$. In local holomorphic coordinates 
$(z_1,\, \ldots,\, z_n)$ on $M$ the local expression of the section $g \,\in\, H^0(M,\, S^2(T^{*}M))$ is 
$\sum_{ij}g_{ij}dz_idz_j,$ with $g_{ij}$ a nondegenerate $n\times n$ matrix with holomorphic coefficients. There 
exists local coordinates with respect to which the matrix $g_{ij}$ is locally constant if and only if the holomorphic 
Riemannian metric is flat (e.g., has vanishing curvature tensor). Hence the ${\rm O}(n,\CC)$-structure is flat if and only 
if the corresponding holomorphic Riemannian metric is flat (i.e, there exists local holomorphic coordinates with 
respect to which the local expression of $g$ is $dz_1^2+\ldots +dz_n^2$).

{\bf Homogeneous tensors.}\, The three previous examples are special cases of the following construction. Consider 
a holomorphic linear $\rm{GL}(n, \CC)$--action on a finite dimension complex vector space $V$ given by 
$i\,:\,\rm{GL}(n, \CC)\,\longrightarrow\, \rm{GL}(V)$. This action together with $R(M)$ define a holomorphic vector 
bundle with fiber type $V$ by the usual quotient construction: two elements $(l,\,v),\, (l',\,v')\,\in\, R(M)\times 
V$ are equivalent if there exists $g \,\in \,\rm{GL}(n, \CC)$ such that $$(l',\, v')\,=\,(l \cdot g^{-1}, \,i(g) 
\cdot v).$$ The holomorphic vector bundle constructed this way will be denoted $R(M) \ltimes_i V$.

Note that for the linear representation $$V^{p,q}\,=\, (\CC^n)^{\otimes p} \otimes (({\CC}^n)^*)^{\otimes q},$$ 
where $p,\, q$ are nonnegative integers, the corresponding vector bundle $R(M) \ltimes V$ coincides with the bundle 
of holomorphic tensor $TM^{\otimes p} \otimes (TM^*)^{\otimes q}$. Moreover any irreducible algebraic representation 
$i \,:\, \rm{GL}(n, \CC) \,\longrightarrow\, \rm{GL}(V)$ is known to be a factor of the representation 
$(\CC^n)^{\otimes p} \otimes (({\CC}^n)^*)^{\otimes q}$ for some integers $(p,\,q)$,

Now consider a holomorphic section of $TM^{\otimes p} \otimes (TM^*)^{\otimes q}$ (namely, a holomorphic tensor of 
type $(p,q)$) given by a $\rm{GL}(n, \CC)-$equivariant map $$\Theta\,\,:\,\, R(M)\,\longrightarrow\, V^{p,q}.$$
Assume that the 
image of $\Theta$ lies in a $\rm{GL}(n, \CC)-$orbit of $V^{p,q}$, with stabilizer $G_0 \,\subset\, \rm{GL}(n, \CC)$. 
Such a tensor is called homogeneous (of order $0$). Under this assumption, the map $\Theta \,:\, R(M)\,\longrightarrow
\,\rm{GL}(n, \CC)/G_0$ defines a holomorphic $G_0$-structure on $M$. Conversely, any $G_0$-structure comes in the 
above way from a unique (up to a scalar constant) holomorphic tensor $\Theta$.

{\bf ${\rm CO}(n,\CC)$-structure}.\, Here ${\rm CO}(n, \CC) \,\subset\, \rm{GL}(n, \CC)$ is the subgroup generated 
by the homotheties and ${\rm O}(n, \CC)$; it is known as the {\it orthogonal similitude group}. It is the stabilizer of the 
line generated by the standard holomorphic Riemannian metric $dz_1^2+\ldots +dz_n^2$ in the linear representation of 
$\rm{GL}(n, \CC)$ on $S^2((\CC^n)^{*})$. Therefore a ${\rm CO}(n, \CC$)--structure is {\it a holomorphic conformal 
structure} on $M$: it is given by a holomorphic line $\mathcal L\,\subset\, S^2(T^*M)$ such that any nonvanishing 
local holomorphic section of $\mathcal L$ is pointwise a nondegenerate complex quadratic form on the holomorphic 
tangent bundle. A holomorphic conformal structure is flat if and only if a nontrivial holomorphic local section of 
$\mathcal L$ can be expressed in local holomorphic coordinates as $f(z_1, \ldots, z_n) (dz_1^2+ \ldots +dz_n^2),$ 
with $f$ a holomorphic function. By a famous result of Gauss, this is always the case when the complex dimension $n$ 
equals $2$. In higher dimension the flatness of the holomorphic conformal structures is equivalent to the 
vanishing of a certain curvature tensor. The curvature tensor in question is known as the Weyl conformal tensor.

{\bf ${\rm CSp}(2n,\CC)$-structure}.\, The subgroup ${\rm CSp}(2n, \CC) \,\subset\, \rm{GL}(2n, \CC)$ is generated 
by ${\rm Sp}(2n, \CC)$ and the homotheties; it is known as the {\it symplectic similitude group}. This ${\rm 
CSp}(2n, \CC)$ is the stabilizer of the line generated by the standard symplectic form $dz_1 \wedge dz_2 + \ldots + 
dz_{2n-1} \wedge dz_{2n}$ in the linear representation of $\rm{GL}(n, \CC)$ on $\bigwedge^2((\CC^n)^{*})$. On a 
complex manifold $M$, a ${\rm CSp}(2n,\CC)$--structure is a holomorphic line subbundle $\mathcal L \,\subset\, 
\bigwedge^2(TM^{*})$ such that any nonvanishing local holomorphic section is pointwise nondegenerate. This structure 
is called a {\it a nondegenerate holomorphic twisted $2$-form}. It is flat exactly when a nontrivial holomorphic 
local section is locally expressed as $f(z_1, \ldots, z_n) (dz_1 \wedge dz_2 + \ldots + dz_{2n-1} \wedge dz_{2n})$, 
with $f$ a local holomorphic function. Moreover, $f$ can be chosen to be constant if and only if the local section 
is closed. In this case we say that $M$ is endowed with a {\it twisted holomorphic symplectic structure}.

{\bf ${\rm GL}(2,\CC)$-structure}.\, Here the corresponding group homomorphism $$\text{GL}(2,{\mathbb C}) 
\,\longrightarrow\, \text{GL}(n,{\mathbb C})$$ is defined by the $(n-1)$-th symmetric product $S^{(n-1)} (\CC^{2})$ 
of the standard representation of $\text{GL}(2,{\mathbb C})$. Recall that $S^{(n-1)} (\CC^{2})$ gives the unique, 
up to tensoring with a $1$-dimensional representation, irreducible $\text{GL}(2,{\mathbb C})$-linear representation 
in complex dimension $n$. This representation is the induced $\text{GL}(2,{\mathbb C})$--action on the homogeneous 
polynomials of degree $(n-1)$ in two variables.

Therefore giving a holomorphic $\text{GL}(2,{\mathbb C})$-structure on a complex manifold $M$ of complex dimension 
$n\, \geq\, 2$ is equivalent to giving a holomorphic rank two vector bundle $E$ over $M$ together with a 
holomorphic isomorphism $TM \,\stackrel{\simeq}{\longrightarrow}\, S^{n-1} (E)$, where $S^{n-1}(E)$ is the 
$(n-1)$-th symmetric power of $E$.

{\bf ${\rm SL}(2,\CC)$-structure}.\, In this case the group homomorphism $\text{SL}(2,{\mathbb C}) 
\,\longrightarrow\, \text{GL}(n,{\mathbb C})$ is defined by the $(n-1)$-th symmetric product $S^{(n-1)} (\CC^{2})$ 
of the standard representation of $\text{SL}(2,{\mathbb C})$. This is the restriction to $\text{SL}(2,{\mathbb C}) 
\,\subset \,\text{GL}(2,{\mathbb C})$ of the homomorphism in the previous example.

An $\text{SL}(2,{\mathbb C})$-structure on a complex manifold $M$ of complex dimension $n\, \geq\, 2$ is the same 
data as a holomorphic rank two vector bundle $E$ with trivial determinant over $M$ together with a holomorphic 
isomorphism $TM \,\stackrel{\simeq}{\longrightarrow}\, S^{n-1}(E)$.

\section{\text{GL(2)}-geometry and \text{SL(2)}-geometry}

In this section we study the holomorphic $\text{GL}(2,{\mathbb C})$-structures (also called 
$\text{GL(2)}$-geometry) and the holomorphic $\text{SL}(2,{\mathbb C})$-structures (also called 
\text{SL(2)}-geometry) on complex manifolds.

An shown before, holomorphic $\text{GL}(2,{\mathbb C})$-structures and $\text{SL}(2, {\mathbb C})$-structures are 
in fact particular cases of holomorphic irreducible reductive $G$-structures \cite{Kob, HM}. Recall that they 
correspond to the holomorphic reduction of the structure group of the frame bundle $R(M)$ of the manifold $M$ from 
$\text{GL}(n,{\mathbb C})$ to $\text{GL}(2,{\mathbb C})$ and $\text{SL}(2,{\mathbb C})$ 
respectively. Also, recall that for a $\text{GL}(2,{\mathbb C})$-structure, the 
corresponding group homomorphism $\text{GL}(2,{\mathbb C}) \,\longrightarrow\, 
\text{GL}(n,{\mathbb C})$ is given by the $(n-1)$-th symmetric product of the standard
representation of $\text{GL}(2,{\mathbb C})$. This 
$n$-dimensional irreducible linear representation of $\text{GL}(2,{\mathbb C})$ is also given by the 
induced action on the homogeneous polynomials of degree $(n-1)$ in two variables. For an
$\text{SL}(2,{\mathbb C})$-geometry, the corresponding homomorphism $\text{SL}(2,{\mathbb C}) 
\,\longrightarrow\, \text{GL}(n,{\mathbb C})$ is the restriction of the above homomorphism to
$\text{SL}(2,{\mathbb C})\, \subset\,\text{GL}(2,{\mathbb C})$.

\begin{proposition}
Let $M$ be a complex manifold endowed with a holomorphic ${\rm SL}(2,{\mathbb 
C})$-structure (respectively, a holomorphic ${\rm GL}(2,{\mathbb C})$-structure). Then the following hold:
\begin{enumerate}
\item[(i)] If the complex dimension of $M$ is odd, then $M$ is endowed with a holomorphic Riemannian metric 
(respectively, a holomorphic conformal structure).

\item[(ii)] If the complex dimension of $M$ is even, then $M$ is endowed with a nondegenerate holomorphic 
$2$-form (respectively, a nondegenerate holomorphic twisted $2$-form). Moreover, if $M$ is K\"ahler then $M$ is 
endowed with a holomorphic symplectic form (respectively, a holomorphic twisted symplectic form).
\end{enumerate}
\end{proposition}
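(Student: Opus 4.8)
The key observation is that the representation $\mathrm{SL}(2,\mathbb C)\longrightarrow \mathrm{GL}(n,\mathbb C)$ given by $S^{n-1}(\mathbb C^2)$ preserves a nondegenerate bilinear form on $\mathbb C^n$, and this form is symmetric when $n-1$ is even (i.e. $n$ odd) and antisymmetric when $n-1$ is odd (i.e. $n$ even). The plan is to make this linear-algebra fact explicit and then transport it to $M$ via the $\mathrm{SL}(2,\mathbb C)$-structure, using the homogeneous-tensor dictionary set up in Section~\ref{section G-struct}.

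First I would recall that $S^{n-1}(\mathbb C^2)$, viewed as the space of degree-$(n-1)$ homogeneous polynomials in two variables $x,y$, carries an $\mathrm{SL}(2,\mathbb C)$-invariant pairing: up to scale it is the unique invariant element of $S^{n-1}(\mathbb C^2)^*\otimes S^{n-1}(\mathbb C^2)^*$, which exists because $S^{n-1}(\mathbb C^2)$ is self-dual as an $\mathrm{SL}(2,\mathbb C)$-module (the standard module $\mathbb C^2$ is self-dual via the symplectic form, and symmetric powers inherit self-duality). Concretely one can write this pairing using the transvectant/apolarity form $\langle x^iy^{n-1-i},\, x^jy^{n-1-j}\rangle \propto (-1)^i \delta_{i+j,\,n-1}\big/\binom{n-1}{i}$; a direct check shows it is $\mathrm{SL}(2,\mathbb C)$-invariant and that swapping the two arguments multiplies it by $(-1)^{n-1}$. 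Hence for $n$ odd it is a nondegenerate symmetric form, giving an $\mathrm{SL}(2,\mathbb C)$-equivariant inclusion $\mathrm{SL}(2,\mathbb C)\hookrightarrow \mathrm O(n,\mathbb C)$, and for $n$ even it is a nondegenerate antisymmetric form, giving $\mathrm{SL}(2,\mathbb C)\hookrightarrow \mathrm{Sp}(n,\mathbb C)$. For the $\mathrm{GL}(2,\mathbb C)$ case, the central $\mathbb C^*\subset \mathrm{GL}(2,\mathbb C)$ acts on $S^{n-1}(\mathbb C^2)$ by the character $\lambda\mapsto\lambda^{n-1}$, hence scales the pairing by $\lambda^{2(n-1)}$; so the pairing is preserved only up to scalar, i.e. $\mathrm{GL}(2,\mathbb C)$ maps into $\mathrm{CO}(n,\mathbb C)$ (for $n$ odd) or $\mathrm{CSp}(n,\mathbb C)$ (for $n$ even), the line spanned by the pairing being preserved.

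Next I would feed this through the $G$-structure. An $\mathrm{SL}(2,\mathbb C)$-structure on $M$ is a reduction $\mathcal R\subset R(M)$ of the frame bundle to $\mathrm{SL}(2,\mathbb C)$ acting via $S^{n-1}$; composing with the inclusion into $\mathrm O(n,\mathbb C)$ (resp. $\mathrm{Sp}(n,\mathbb C)$) enlarges it to an $\mathrm O(n,\mathbb C)$-structure (resp. $\mathrm{Sp}(n,\mathbb C)$-structure) on $M$, which by the examples of Section~\ref{section G-struct} is exactly a holomorphic Riemannian metric (resp. a nondegenerate holomorphic $2$-form). Equivalently, and perhaps more cleanly for the write-up, I would use the homogeneous-tensor description: the invariant pairing is a fixed vector in $S^2((\mathbb C^n)^*)$ or $\bigwedge^2((\mathbb C^n)^*)$ under $\mathrm{SL}(2,\mathbb C)$, so the associated bundle construction $\mathcal R\ltimes(\text{pairing})$ produces a global holomorphic section $g\in H^0(M,S^2(T^*M))$ (resp. $\omega_0\in H^0(M,\bigwedge^2 T^*M)$), nondegenerate fibrewise because the pairing is nondegenerate on $\mathbb C^n$. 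For $\mathrm{GL}(2,\mathbb C)$ one instead gets a well-defined holomorphic line subbundle $\mathcal L\subset S^2(T^*M)$ (resp. $\subset\bigwedge^2 T^*M$) with nondegenerate local generators, i.e. a holomorphic conformal structure (resp. a nondegenerate holomorphic twisted $2$-form). This proves (i) and the first sentence of (ii).

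For the final Kähler assertion: if $n$ is even we have produced a nondegenerate holomorphic $2$-form $\omega_0$ (resp. a twisted one). On a compact Kähler manifold every holomorphic form is closed — this is the standard Hodge-theoretic fact that $\partial$-closed holomorphic forms are $d$-closed — so $d\omega_0=0$, and by the Darboux discussion already recorded in the $\mathrm{Sp}(2n,\mathbb C)$-example this means $\omega_0$ is a holomorphic symplectic form; in the twisted case a local generator of $\mathcal L$ can be chosen closed, giving a twisted holomorphic symplectic structure. The only mild subtlety to be careful about is the $\mathrm{GL}(2,\mathbb C)$/twisted case, where "closed" must be interpreted after a choice of local generator and one checks this choice can be made consistently; but no curvature computation is needed anywhere, so there is no serious obstacle — the one genuinely content-carrying step is the invariant-theoretic claim that $S^{n-1}(\mathbb C^2)$ carries an $\mathrm{SL}(2,\mathbb C)$-invariant nondegenerate form whose symmetry type is $(-1)^{n-1}$, and I would either cite classical $\mathrm{SL}(2)$ representation theory or give the two-line apolarity computation above.
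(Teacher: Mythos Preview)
Your proposal is correct and follows essentially the same approach as the paper: both arguments rest on the fact that the $\mathrm{SL}(2,\mathbb C)$-invariant symplectic form on $\mathbb C^2$ induces a nondegenerate bilinear form on $S^{n-1}(\mathbb C^2)$ of symmetry type $(-1)^{n-1}$, that $\mathrm{GL}(2,\mathbb C)$ preserves only the line it spans, and that on a compact K\"ahler manifold holomorphic forms are closed. The only difference is presentational: where the paper cites \cite{DG,Kr} for the existence and parity of the invariant form, you supply the self-duality argument and the explicit apolarity pairing, making your version more self-contained.
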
 

\begin{proof}
A holomorphic $\text{SL}(2,{\mathbb C})$-structure on a complex surface $M$ is a holomorphic trivialization of 
the canonical bundle $K_M\,=\, \bigwedge^2 T^*M$. Indeed, the standard action of $\text{SL}(2,{\mathbb C})$ on 
${\mathbb C}^2$ preserves the holomorphic volume form $dz_1\bigwedge dz_2$.

Notice that in the case of complex dimension two the holomorphic volume form $dz_1\bigwedge dz_2$ coincides with 
the standard holomorphic symplectic structure. Hence the (flat) ${\rm SL}(2,\CC)$--structure coincide with the 
(flat) ${\rm Sp}(2,\CC)$--structure.

This produces a nondegenerate complex quadratic form on $S^{n-1}({\mathbb C}^2)$, for any $n-1$ even. Also, for any 
odd integer $n-1$, this endows $S^{n-1}({\mathbb C}^2)$ with a nondegenerate complex alternating 2-form on the 
symmetric product $S^{n-1}({\mathbb C}^2)$. Consequently, the corresponding irreducible linear representation 
$\text{SL}(2,{\mathbb C})\,\longrightarrow\, \text{GL}(n,{\mathbb C})$ preserves a nondegenerate complex quadratic 
form on ${\mathbb C}^n$, if $n$ is odd and a nondegenerate 2-form on ${\mathbb C}^n$, if $n$ is even (see, for 
example, Proposition 3.2 and Sections 2 and 3 in \cite{DG}; see also \cite{Kr}).

The above linear representation $\text{GL}(2,{\mathbb C})\,\longrightarrow\, \text{GL}(n,{\mathbb C})$ preserves 
the line in ${(\mathbb C}^n)^*\otimes {(\mathbb C}^n)^*$ spanned by the above tensor. The action of 
$\text{GL}(2,{\mathbb C})$ on this line is nontrivial.

(i) Assume that the complex dimension of $M$ is odd. The above observations imply that the $\text{SL}(2,{\mathbb 
C})$-geometry (respectively, $\text{GL}(2,{\mathbb C})$-geometry) induces a {\it holomorphic Riemannian metric} 
(respectively, a holomorphic conformal structure) on M.

(ii) Assume that the complex dimension of $M$ is even. The $\text{SL}(2,{\mathbb C})$-geometry (respectively, 
$\text{GL}(2,{\mathbb C})$-geometry) induces a holomorphic nondegenerate $2$- form (respectively, a holomorphic 
twisted nondegenerate $2$-form).

Moreover, if $M$ is K\"ahler, any holomorphic differential form on $M$ is closed. This implies that
the holomorphic nondegenerate $2$-form is a holomorphic symplectic form.
\end{proof} 

The simplest nontrivial examples of $\text{GL}(2,{\mathbb C})$ and $\text{SL}(2,{\mathbb C})$ structures are 
provided by the complex threefolds. In this case we have the following:

\begin{proposition}
Let $M$ be a complex threefold.
\begin{enumerate}
\item[(i)] A holomorphic ${\rm SL}(2,{\mathbb C})$-structure on $M$ is a holomorphic Riemannian metric.

\item[(ii)] A holomorphic ${\rm GL}(2,{\mathbb C})$-structure on $M$ is a holomorphic conformal structure.
\end{enumerate}
\end{proposition}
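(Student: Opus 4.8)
The plan is to apply the previous proposition (for $M$ of complex dimension $n=3$) together with the observation that in dimension three the relevant $G$-structures are \emph{automatically flat}, so there is nothing extra to prove beyond specializing.

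First I would invoke part (i) of the previous proposition in the case $n\,=\,3$: an $\text{SL}(2,{\mathbb C})$-structure on $M$ induces a holomorphic Riemannian metric, because $n-1\,=\,2$ is even, so the representation $\text{SL}(2,{\mathbb C})\,\longrightarrow\,\text{GL}(3,{\mathbb C})$ preserves a nondegenerate complex quadratic form on ${\mathbb C}^3$ (this is the classical identification of the symmetric square of the standard representation with the standard three-dimensional representation of $\text{SO}(3,{\mathbb C})$ up to finite cover). Conversely, a holomorphic Riemannian metric on a threefold is exactly an $\text{O}(3,{\mathbb C})$-structure, and since $\text{SO}(3,{\mathbb C})$ is the image of $\text{SL}(2,{\mathbb C})$ under the symmetric-square homomorphism (a two-to-one isogeny), an $\text{SL}(2,{\mathbb C})$-structure on $M$ is the same data as an $\text{SO}(3,{\mathbb C})$-structure, hence the same as a holomorphic Riemannian metric on $M$ together with an orientation, i.e.\ a non-vanishing section of $K_M$ — but this last piece is already part of the $\text{SL}(2,{\mathbb C})$-structure. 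The point to spell out is that the map $\text{SL}(2,{\mathbb C})\,\longrightarrow\,\text{SO}(3,{\mathbb C})$ is surjective with finite kernel $\{\pm I\}$, so reductions to $\text{SL}(2,{\mathbb C})$ and to $\text{SO}(3,{\mathbb C})$ of $R(M)$ correspond bijectively; I would cite \cite{DG} (Proposition 3.2 and Sections 2, 3) for the representation-theoretic identifications, exactly as in the proof of the preceding proposition.

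For part (ii), the same argument applies with $\text{GL}(2,{\mathbb C})$ in place of $\text{SL}(2,{\mathbb C})$: the symmetric-square homomorphism $\text{GL}(2,{\mathbb C})\,\longrightarrow\,\text{GL}(3,{\mathbb C})$ has image the group of linear maps preserving the quadratic form up to scale, namely $\text{CO}(3,{\mathbb C})$, with kernel the finite group of square roots of $1$ acting by scalars that are absorbed. Hence a holomorphic $\text{GL}(2,{\mathbb C})$-structure on a threefold is precisely a $\text{CO}(3,{\mathbb C})$-structure, which by the discussion in Section~\ref{section G-struct} is exactly a holomorphic conformal structure. One should check that the homomorphism $\text{GL}(2,{\mathbb C})\,\longrightarrow\,\text{CO}(3,{\mathbb C})$ is indeed surjective: the scaling factor on the quadratic form realized by $\lambda\cdot I\,\in\,\text{GL}(2,{\mathbb C})$ is $\lambda^{2}$, which together with the already-surjective map onto $\text{SO}(3,{\mathbb C})$ from the $\text{SL}(2,{\mathbb C})$ part generates all of $\text{CO}(3,{\mathbb C})$.

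The main (and essentially only) obstacle is the bookkeeping of these isogenies: one must be careful that passing from $\text{SL}(2,{\mathbb C})$ to $\text{SO}(3,{\mathbb C})$ (and from $\text{GL}(2,{\mathbb C})$ to $\text{CO}(3,{\mathbb C})$) does not lose or add data at the level of principal subbundles of $R(M)$. Since the kernels are finite and central, a reduction of structure group to the source group determines and is determined by a reduction to the image group — there is no obstruction or ambiguity because we are working with an honest subbundle of the frame bundle rather than with abstract principal bundles. Everything else is a direct quotation of the previous proposition specialized to $n\,=\,3$, noting that in dimension three holomorphic conformal structures are automatically flat by the classical result of Gauss recalled in Section~\ref{section G-struct}, though flatness is not part of the assertion here.
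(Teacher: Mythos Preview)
Your proposal is essentially correct and follows the same route as the paper: both arguments rest on the fact that the symmetric-square representation $\text{SL}(2,\mathbb C)\to\text{GL}(3,\mathbb C)$ has image exactly $\text{SO}(3,\mathbb C)$ (the paper makes the invariant quadratic form explicit as the discriminant $b^2-4ac$ on $\{aX^2+bXY+cY^2\}$, while you cite \cite{DG}), and similarly that $\text{GL}(2,\mathbb C)\to\text{GL}(3,\mathbb C)$ has image $\text{CO}(3,\mathbb C)$. The paper also records the same isogeny bookkeeping you flag, noting that $-\text{Id}$ acts trivially so that $\text{PSL}(2,\mathbb C)\cong\text{SO}(3,\mathbb C)$ and $\text{GL}(2,\mathbb C)/(\mathbb Z/2\mathbb Z)\cong\text{CO}(3,\mathbb C)$.

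One factual slip to correct: your closing remark that ``in dimension three holomorphic conformal structures are automatically flat by the classical result of Gauss'' is wrong. Gauss's theorem (as recalled in Section~\ref{section G-struct}) is for complex dimension $2$, not $3$; in dimension $3$ conformal flatness is a genuine condition (governed by the Cotton tensor). Since, as you yourself note, flatness plays no role in the statement, this error does not affect your argument---but you should delete that sentence.
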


\begin{proof} 
(i) Recall that $S^{2}({\mathbb C}^2)$ is the unique irreducible $\text{SL}(2,{\mathbb C})$-representation on
the three-dimensional complex vector space. It coincides with the natural action of $\text{SL}(2,{\mathbb C})$ on 
the vector space of homogeneous quadratic polynomials in two variables
\begin{equation}\label{hp}
\{aX^2+bXY +c Y^2\, \mid\, a,\, b,\, c\, \in\, \mathbb C\}.
\end{equation}
The above (coordinate changing) action preserves the discriminant 
\begin{equation}\label{ded}
\Delta\,:=\,b^2-4ac
\end{equation}
which is a nondegenerate complex quadratic form. Notice also that the action of $-{\rm Id}\,\in\, 
\text{SL}(2,{\mathbb C})$ is trivial. Consequently, we obtain a holomorphic isomorphism between 
$\text{PSL}(2,{\mathbb C})$ and the complex orthogonal group $\text{SO}(3, \mathbb C)$, the connected component of 
the identity in the complex orthogonal group $\text{O}(3, \mathbb C)$.

The discriminant $\Delta$ in \eqref{ded} induces a holomorphic Riemannian metric on the complex threefold $M$. This 
holomorphic Riemannian metric is given by a reduction of the structural group of the frame bundle $R(M)$ to the 
orthogonal group $\text{O}(3, \mathbb C)$. On a double unramified cover of $M$ there is a reduction of the 
structural group of $R(M)$ to the connected component of the identity, namely the subgroup $\text{SO}(3, \mathbb 
C)$.

An $\text{SL}(2,{\mathbb C})$-geometry on a complex threefold $M$ is exactly the same data as a holomorphic 
Riemannian metric on $M$. Moreover, a $\text{PSL}(2,{\mathbb C})$-geometry is the same data as a holomorphic 
Riemannian metric with a compatible holomorphic orientation (i.e., a nowhere vanishing holomorphic section $vol$ 
of $\bigwedge^3T^*M$ such that any pointwise basis of $TM$ which is orthogonal with respect to the holomorphic 
Riemannian metric has volume one).

(ii) Notice that the $\text{GL}(2,{\mathbb C})$-representation (by coordinate changing) on the vector space defined 
in \eqref{hp} globally preserves the line generated by the discriminant $\Delta$ in \eqref{ded}(and acts nontrivially 
on it). Recall that the action of -Id is trivial.

This gives an isomorphism 
between $\text{GL}(2,{\mathbb C})/({\mathbb Z}/2{\mathbb Z})$ and the conformal group $$\text{CO}(3, \mathbb C) 
\,=\,(\text{O}(3, \mathbb C) \times {\mathbb C}^* )/({\mathbb Z}/2{\mathbb Z}) \,=\, \text{SO}(3, \mathbb C) \times {\mathbb C}^*.$$ Consequently, 
the $\text{GL}(2,{\mathbb C})$--structure coincides with a holomorphic reduction of the structure 
group of the frame bundle $R(M)$ to $\text{CO}(3, \mathbb C)$. This holomorphic reduction of the 
structure group defines a holomorphic conformal structure on $M$. Notice that $\text{CO}(3, \mathbb C)$ being connected, the two different holomorphic orientations
of a three dimensional holomorphic Riemannian manifold are conformally equivalent.
\end{proof} 

\subsection{Models of the flat holomorphic conformal geometry}
 
Recall that flat conformal structures in complex dimension $n \,\geq\, 3$ are locally modeled on the 
quadric
$$
Q_n\,\, :=\,\, \{[Z_0 : Z_1 : \cdots : Z_{n+1}] \, \mid\, Z_0^2+Z_1^2+ \ldots +Z_{n+1}^2\,=\,0\}
\, \subset\, {\mathbb C}{\mathbb P}^{n+1}\, .
$$
The holomorphic automorphism group of $Q_n$ is $\text{PSO}(n+2, \mathbb C)$.

Let us mention that $Q_n$ is identified with the real Grassmannian of oriented $2$--planes in ${\mathbb R}^{n+2}$ 
(see, for instance, Section 1 in \cite{JR2}). As a real homogeneous space we have the following identification: 
$$Q_n\,=\, {\rm SO}(n+2, \mathbb R)/({\rm SO}(2, \mathbb R)\times{\rm SO}(n, \mathbb R)).$$

Moreover, the standard action of ${\rm SO}(n+2, \mathbb R)$ on $Q_n$ is via holomorphic automorphisms. To see this 
let us describe the complex structure on $Q_n$ from the (real) Lie group theoretic point of view.

Note that the 
real tangent bundle of $Q_n\,=\, {\rm SO}(n+2, \mathbb R)/({\rm SO}(2, \mathbb R)\times
{\rm SO}(n, \mathbb R))$ is identified with the  vector bundle associated to the principal
${\rm SO}(2, \mathbb R)\times{\rm SO}(n, \mathbb R)$--bundle
$$
{\rm SO}(n+2, \mathbb R)\, \longrightarrow\, {\rm SO}(n+2, \mathbb R)/({\rm SO}(2, \mathbb R)\times
{\rm SO}(n, \mathbb R))
$$
for the adjoint action of ${\rm SO}(2, \mathbb R)\times{\rm SO}(n, \mathbb R)$ on the 
quotient of real Lie algebras ${\rm so}(n+2, \mathbb R)/({\rm so}(2, \mathbb R)
\oplus{\rm so}(n, \mathbb R))$.
The complex structure of the tangent space ${\rm so}(n+2, \mathbb R)/({\rm so}(2, \mathbb R)
\oplus{\rm so}(n, \mathbb R))$ at the point $({\rm SO}(2, \mathbb R)\times{\rm SO}(n, \mathbb R))/
({\rm SO}(2, \mathbb R)\times{\rm SO}(n, \mathbb R))$ of $Q_n$ is given 
by the almost-complex operator $J$ such that its exponential $\{\exp(tJ)\}_{t\in\mathbb R}$
is the adjoint action of the factor ${\rm SO}(2, \mathbb R)$. Since ${\rm SO}(2, \mathbb R)$
lies in the center of ${\rm SO}(2, \mathbb R)\times{\rm SO}(n, \mathbb R)$, this almost complex structure $J$ is
preserved by the action of the adjoint action of ${\rm SO}(2, \mathbb R)\times{\rm SO}(n, \mathbb R)$
on ${\rm so}(n+2, \mathbb R)/({\rm so}(2, \mathbb R)
\oplus{\rm so}(n, \mathbb R))$. Consequently, translating this almost complex structure $J$ by the
action of ${\rm SO}(n+2, \mathbb R)$ we get an almost complex structure on $Q_n$. This
almost complex structure is integrable.

The quadric $Q_n$  is an irreducible Hermitian symmetric space of type \textbf{BD I} 
\cite[p.~312]{Bes}. For more about its geometry and that of its noncompact dual $D_n$ (as a Hermitian symmetric 
space) the reader is referred to \cite[Section 1]{JR2}. A detailed description of $D_3$ and $Q_3$ will be given in Section \ref{Q_3}.

Let $M$ be a compact complex manifold of complex dimension $n$ endowed with a flat holomorphic conformal structure. 
Then the pull-back of the flat holomorphic conformal structure to the universal cover $\widetilde{M}$ of $M$ gives 
rise to a local biholomorphism $\text{dev} \,:\, \widetilde{M}\,\longrightarrow\, Q_n$ which is called the {\it 
developing map} and also gives rise to a group homomorphism from the fundamental group of $M$ to $\text{PSO}(n+2, 
\mathbb C)$ which is called the {\it monodromy homomorphism}. The developing map is uniquely defined up to the 
post-composition by an element of $\text{PSO}(n+2, \mathbb C)$, and the monodromy morphism is well-defined up to an 
inner automorphism of $\text{PSO}(n+2, \mathbb C)$. Moreover, the developing map is equivariant with respect to the 
action of the fundamental group of $M$ on $\widetilde{M}$ (by deck transformations) and on $Q_n$ (through the 
monodromy homomorphism).

For more details about this classical description of flat geometric structures, due to C. Ehresmann, the reader is 
referred to \cite{Eh,Sh} (see also Section \ref{sect-Cartan}). This also leads to the following characterizations 
of the quadric $Q_n$.

\begin{proposition}[{\cite{Oc,HM,Ye}}]\label{quadrics} Let $M$ be a compact complex simply connected manifold of 
complex dimension $n$, endowed with a flat holomorphic conformal structure. Then $M$ is biholomorphic with $Q_n$ and 
the conformal structure on it is the standard one.
\end{proposition}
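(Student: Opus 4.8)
The plan is to use the theory of flat holomorphic Cartan geometries and the classical Ehresmann description recalled just above. A flat holomorphic conformal structure on $M$ is equivalent to a flat holomorphic Cartan geometry modeled on the quadric $Q_n\,=\,\text{PSO}(n+2,\mathbb C)/P$, where $P$ is the parabolic subgroup stabilizing an isotropic line; hence $M$ comes with a developing map $\text{dev}\,:\,\widetilde M\,\longrightarrow\, Q_n$ and a monodromy homomorphism $\rho\,:\,\pi_1(M)\,\longrightarrow\,\text{PSO}(n+2,\mathbb C)$, with $\text{dev}$ a $\rho$-equivariant local biholomorphism. Since $M$ is assumed simply connected, $\widetilde M\,=\,M$ and the monodromy is trivial, so we simply have a local biholomorphism $\text{dev}\,:\, M\,\longrightarrow\, Q_n$. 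The whole problem reduces to showing that a local biholomorphism from a compact connected complex manifold to $Q_n$ must be a global biholomorphism.

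First I would observe that $\text{dev}$ is an open map (being a local biholomorphism), so its image is open in $Q_n$; it is also closed because $M$ is compact and $Q_n$ is Hausdorff, so the image is all of $Q_n$ by connectedness. Thus $\text{dev}$ is a surjective local biholomorphism from a compact manifold, hence a finite covering map of $Q_n$. Now I would invoke the fact that $Q_n$ is simply connected for $n\,\geq\,2$: indeed $Q_n\,=\,{\rm SO}(n+2,\mathbb R)/({\rm SO}(2,\mathbb R)\times{\rm SO}(n,\mathbb R))$ is a compact Hermitian symmetric space, and these are all simply connected (alternatively, $Q_n$ is a smooth hypersurface in $\mathbb{CP}^{n+1}$ of dimension $\geq 2$, so the Lefschetz hyperplane theorem gives $\pi_1(Q_n)\,=\,\pi_1(\mathbb{CP}^{n+1})\,=\,0$). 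A connected covering of a simply connected space is a one-sheeted covering, so $\text{dev}$ is injective, and therefore a biholomorphism $M\,\stackrel{\simeq}{\longrightarrow}\,Q_n$. Transporting the flat conformal structure along $\text{dev}$, and using that $\text{dev}$ is by construction compatible with the conformal Cartan geometries, shows the conformal structure on $M$ is carried to the standard flat conformal structure on $Q_n$; since $\text{dev}$ is now a biholomorphism this identifies the conformal structure on $M$ with the standard one.

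The main obstacle — really the only nontrivial input — is the existence and good behaviour of the developing map: one must know that a flat holomorphic conformal structure genuinely integrates to a developing pair $(\text{dev},\rho)$ with $\text{dev}$ a local biholomorphism into $Q_n$. This is the holomorphic incarnation of Ehresmann's theorem on $(G,X)$-structures applied to the flat Cartan geometry attached to a flat holomorphic conformal structure; it rests on the fact that the flatness (vanishing of the Weyl conformal tensor for $n\,\geq\,3$, automatic for $n\,=\,2$) makes the conformal structure locally isomorphic to that of $Q_n$, and that these local isomorphisms, being rigid, patch along analytic continuation to give a global map on the simply connected $M$. Once this is granted, the rest is the soft topological argument above. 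I would also remark that the hypothesis $n\,\geq\,2$ (so that $\dim_{\mathbb C}Q_n\,\geq\,2$) is what guarantees $\pi_1(Q_n)\,=\,0$; for $n\,=\,1$ the quadric $Q_1$ is $\mathbb{CP}^1$ which is still simply connected, so in fact the conclusion persists, but the conformal interpretation degenerates there. Finally, for completeness one can cite that these arguments, together with the identification of the automorphism group $\text{PSO}(n+2,\mathbb C)$ of $Q_n$, are exactly those of Ochiai, Hwang--Mok and Ye referenced in the statement.
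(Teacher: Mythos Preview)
Your proof is correct and follows the same approach as the paper: use the developing map of the flat Cartan geometry, reduce to a local biholomorphism $M\to Q_n$ by simple connectedness of $M$, and conclude it is a global biholomorphism. The paper's proof is a two-line assertion that compactness and simple connectedness of $M$ force $\text{dev}$ to be a biholomorphism; you have simply unpacked the implicit topological step (surjectivity via open--closed image, then covering map via properness, then triviality of the cover via $\pi_1(Q_n)=0$), which the paper leaves to the reader.
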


\begin{proof} Since $M$ is compact and simply connected, the developing map $\text{dev} \,:\, \widetilde{M}\,=\,M 
\,\longrightarrow\, Q_n$ is a biholomorphism. Moreover, this developing map intertwines the flat holomorphic 
conformal structure on $M$ with the standard conformal structure of $Q_n$.
\end{proof}

All quadrics $Q_n$ are {\it Fano manifolds}. Recall that a Fano manifold is a compact complex projective manifold 
$M$ such that its anticanonical line bundle $K_M^{-1}$ is ample. Fano manifolds are intensively studied. In 
particular, it is known that they are rationally connected \cite{Ca2}, \cite{KMM}, and simply connected \cite{Ca1}.

An important invariant for Fano manifolds is their {\it index}. By definition, the index of a Fano manifold $M$ is 
the maximal positive integer $l$ such that the canonical line bundle $K_M$ is divisible by $l$ in the Picard group 
of $M$ (i.e., the group of holomorphic line bundles on $M$). In other words, the index $l$ is the maximal positive 
integer such that there exists a holomorphic line bundle $L$ over $M$ such that $L^{\otimes l}=K_M$.

Theorem \ref{Fano} below, proved in \cite{BD1}, shows, in particular, that among the quadrics $Q_n$,\, $n \,\geq\, 3$, 
only $Q_3$ admits a holomorphic $\text{GL}(2,{\mathbb C})$--structure.

\begin{theorem}[{\cite{BD1}}]\label{Fano}
Let $M$ be a Fano manifold, of complex dimension $n \,\geq\, 3$, that admits a holomorphic 
$\text{GL}(2,{\mathbb C})$--structure. Then $n\,=\,3$, and $M$ is biholomorphic to the quadric $Q_3$ (the 
${\rm GL}(2,{\mathbb C})$--structure being the standard one).
\end{theorem}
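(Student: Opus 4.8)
The plan is to exploit the very rigid structure that a holomorphic $\text{GL}(2,\CC)$-structure imposes on $TM$, namely the isomorphism $TM\,\cong\, S^{n-1}(E)$ for a holomorphic rank two bundle $E$, and to confront this with the positivity coming from the Fano hypothesis. First I would record the numerical consequences: from $TM\,\cong\, S^{n-1}(E)$ one gets $\det TM\,=\,K_M^{-1}\,\cong\, (\det E)^{\otimes n(n-1)/2}$, so $K_M^{-1}$ is divisible (in the Picard group) by $n(n-1)/2$; in particular the Fano index of $M$ is at least $n(n-1)/2$. Now invoke the Kobayashi--Ochiai bound: the index of an $n$-dimensional Fano manifold is at most $n+1$, with equality only for $\CC\mathbb{P}^n$ and the value $n$ only for the quadric $Q_n$. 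Comparing $n(n-1)/2\,\le\, n+1$ forces $n\,\le\, 3$, and since we assume $n\,\ge\,3$ we get $n\,=\,3$. (One should check the borderline arithmetic: for $n=3$, $n(n-1)/2\,=\,3\,\le\, 4\,=\,n+1$, consistent; for $n=4$, $n(n-1)/2\,=\,6\,>\,5$, excluded.)

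Having pinned $n\,=\,3$, the remaining task is to identify $M$ with $Q_3$ together with its standard $\text{GL}(2,\CC)$-structure. By Proposition stated above (the threefold case), a holomorphic $\text{GL}(2,\CC)$-structure on the complex threefold $M$ is the same as a holomorphic conformal structure, i.e. a reduction of $R(M)$ to $\text{CO}(3,\CC)$; equivalently a holomorphic line subbundle $\mathcal L\,\subset\, S^2(T^*M)$ fibrewise nondegenerate. So $M$ is a Fano threefold carrying a holomorphic conformal structure. The strategy here is to show this conformal structure is automatically flat, and then to apply Proposition \ref{quadrics}. For flatness, I would use that $M$ is Fano, hence rationally connected and simply connected; rational curves on $M$ can be used to propagate the vanishing of the holomorphic Weyl tensor (a holomorphic section of a bundle that is ``negative along rational curves''), or alternatively one cites the classification/rigidity results for holomorphic conformal structures on Fano manifolds (the references \cite{Oc,HM,Ye,JR2} and \cite{KO2,IKO} handle exactly such projective-connection/conformal-structure rigidity on low-dimensional Fano manifolds). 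Once the conformal structure is flat, Proposition \ref{quadrics} applies since $M$ is compact and simply connected, giving a biholomorphism $M\,\cong\, Q_3$ carrying the conformal structure to the standard one; unwinding the dictionary of the threefold Proposition, the $\text{GL}(2,\CC)$-structure is then the standard one on $Q_3$.

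The main obstacle is the flatness step. The index computation cleanly extracts $n\,=\,3$, but a priori the holomorphic conformal structure (equivalently the $\text{GL}(2,\CC)$-structure) on the Fano threefold could be non-flat, and ruling this out requires real work: one must show that the holomorphic Weyl curvature tensor, a global holomorphic section of a certain tensor bundle built from $T^*M$, must vanish. The cleanest route is the ``vanishing theorem along minimal rational curves'' philosophy: restrict the curvature tensor to a minimal rational curve $\CC\mathbb{P}^1\,\hookrightarrow\, M$, show the relevant bundle has negative degree on that curve (using $TM\,\cong\, S^2(E)$ to control the splitting type of $E|_{\CC\mathbb{P}^1}$ and hence of $T^*M|_{\CC\mathbb{P}^1}$), conclude the section vanishes there, and then spread the vanishing out since such rational curves cover $M$. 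An alternative, less computational route is simply to quote the known classification of Fano threefolds admitting a holomorphic $\text{GL}(2)$-structure or a holomorphic conformal structure from \cite{BD1} itself and its antecedents; since this is a survey, either presentation is acceptable, but the rational-curve argument is the one I would expect the authors to have in mind.
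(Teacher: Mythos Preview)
Your proposal is correct and uses the same index-based strategy as the paper. The only difference is in the order of the final identification: the paper exploits the Kobayashi--Ochiai characterization more fully, noting that since $K_M=(\bigwedge^2 E^*)^{\otimes 3}$ is a cube in $\mathrm{Pic}(M)$, the case $M\cong\CC\mathbb{P}^3$ (index $4$, $K=-4$ not a cube) is excluded and hence $M\cong Q_3$ \emph{before} any flatness is discussed; flatness is then invoked only to identify the $\mathrm{GL}(2,\CC)$-structure on $Q_3$ as the standard one, and for this the paper simply cites \cite{KO,Kl,HM,Ye} rather than running a rational-curve argument. You instead extract only $n=3$ from the index inequality and recover both the biholomorphism $M\cong Q_3$ and the standardness of the structure in one stroke via flatness plus Proposition~\ref{quadrics}. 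Both routes are valid; the paper's is slightly more economical (and makes the flatness step lighter, since one already knows the underlying manifold), while yours has the virtue of not needing to separately dispose of $\CC\mathbb{P}^3$.
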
 

\begin{proof}
Let
$$TM \,\stackrel{\sim}{\longrightarrow}\, S^{n-1}(E)$$
be a holomorphic $\text{GL}(2,{\mathbb C})$--structure on $M$, where $E$ is a holomorphic vector bundle
of rank two on $X$. A direct computation shows that
$$K_M\,= \, (\bigwedge\nolimits^2 E^*)^{\frac{n(n-1)}{2}}\, .$$
Hence the index of $M$ is at least $\frac{n(n-1)}{2}$.
It is a known fact that the index of a Fano manifold $N$ of complex dimension $n$ is at most $n+1$. Moreover, the 
index is maximal ($=\, n+1$) if and only if $N$ is biholomorphic to the projective space 
${\mathbb C}{\mathbb P}^n$, and the index equals $n$ if and only if $N$ is biholomorphic to the quadric 
\cite{KO1}. These imply that
$n\,=\,3$ and $M$ is biholomorphic to the quadric $Q_3$. 

The $\text{GL}(2,{\mathbb C})$--structure on the quadric $Q_3$ must be flat \cite{KO, Kl, HM,Ye}.
Since the quadric is simply connected this flat $\text{GL}(2,{\mathbb C})$--structure coincides with the 
standard one \cite{Oc,HM,Ye} (see also Proposition \ref{quadrics}).
\end{proof} 

Recall that a general result of Borel on Hermitian irreducible symmetric spaces shows 
that the noncompact dual is always realized as an open subset of its compact dual.
 
We will give below a geometric description of the noncompact dual $D_3$ of $Q_3$ as an open 
subset in $Q_3$ which seems to be less known (it was explained to us by Charles Boubel whom we warmly acknowledge).

\subsection{Geometry of the quadric $Q_3$}.   \label{Q_3}
 
Consider the complex quadric form $q_{3,2}\,:=\,Z_0^2 +Z_1^2+Z_2^2-Z_3^2-Z_4^2$ of five variables,
and let $$Q\,\,\subset\,\, {\mathbb C}{\mathbb P}^4$$
be the quadric defined 
by the equation $q_{3,2}\,=\,0$. Then $Q$ is biholomorphic to $Q_3$.
Let ${\rm O}(3,2)\,\subset\, \text{GL}(5,{\mathbb R})$ be the real orthogonal group for
$q_{3,2}$, and denote by ${\rm SO}_0(3,2)$ the connected component
of ${\rm O}(3,2)$ containing the identity element. The quadric $Q$ admits a 
natural holomorphic action of the real Lie group ${\rm SO}_0(3,2)$, which is not transitive,
in contrast to the action of ${\rm SO}(5, \mathbb R)$ on $Q_3$. The orbits of the
${\rm SO}_0(3,2)$--action on $Q_3$ 
coincide with the connected components of the complement $Q_3 \setminus S$, where $S$ is
the real hypersurface of $Q_3$ defined by the equation
$$\mid Z_0\mid^2 + \mid Z_1\mid^2 + \mid Z_2\mid^2 - \mid Z_3\mid^3 - 
\mid Z_4\mid^2\,=\,0\, .$$
 
Notice that the above real hypersurface $S$ contains all real points of $Q$. In fact, it can be 
shown that $S \bigcap Q$ coincides with the set of point $m \,\in\, Q$ such that the complex 
line $(m,\, \overline{m})$ is isotropic (i.e., it actually lies in $Q$). Indeed, since 
$q_{3,2}(m)\,=\,0$, the line generated by $(m, \,\overline{m})$ lies in $Q$ if and only if 
$m$ and $\overline{m}$ are perpendicular with respect to the bilinear symmetric form associated 
to $q_{3,2}$, or equivalently $m \,\in\, S$.

For any point $m \,\in\, Q \setminus S$, the form $q_{3,2}$ is nondegenerate on the line $(m,\, 
\overline{m})$. To prove this first observe that the complex line generated by $(m,\, 
\overline{m})$, being real, may be considered as a plane in the real projective space ${\mathbb 
R}{\mathbb P}^4$. The restriction of the (real) quadratic form $q_{3,2}$ to this real plane 
$(m,\, \overline{m})$ vanishes at the points $m$ and $\overline{m}$ which are distinct (because 
all real points of $Q$ lie in $S$). It follows that the quadratic form cannot have signature 
$(0,\,1)$ or $(1,\,0)$ when restricted to the real plane $(m,\, \overline{m})$. Consequently, 
the signature of the restriction of $q_{3,2}$ to this plane is either $(2,\,0)$ or $(1,\,1)$ or 
$(0,\,2)$. Each of these three signature types corresponds to an ${\rm SO}_0(3,2)$ orbit in $Q$.
 
Take the point $m_0\,=\, [0:\,0:\,0:\,1:\,\sqrt{-1}] \,\in\, Q$. The noncompact dual $D_3$ of $Q_3$ is the 
${\rm SO_0}(3,2)$--orbit of $m_0$ in $Q$. It is an open subset of $Q$ biholomorphic to a 
bounded domain in ${\mathbb C}^3$; it is the three dimension Lie ball (the bounded domain 
$IV_3$ in Cartan's classification).

The signature of $q_{3,2}$ on the above line $(m_0,\,\overline{m}_0)$ is $(0,\,2)$. The 
signature of $q_{3,2}$ on the orthogonal part of $(m_0,\, \overline{m}_0)$, which is canonically 
isomorphic to $T_{m_0}Q$, is $(3,\,0)$. Then the ${\rm SO}_0(3,2)$--orbit of $m_0$ in $Q$ inherits 
an ${\rm SO}_0(3,2)$--invariant Riemannian metric. The stabilizer of $m_0$ is ${\rm SO}(2, 
\mathbb R) \times{\rm SO}(3, \mathbb R)$. Here ${\rm SO}(2, \mathbb R)$ acts on ${\mathbb 
C}^3$ through the one parameter group $\text{exp}(t J)$, with $J$ being the complex 
structure, while the ${\rm SO}(3, \mathbb R)$ action on ${\mathbb C}^3$ is given by the 
complexification of the canonical action of ${\rm SO}(3, \mathbb R)$ on ${\mathbb R}^3$. 
Consequently, the action of ${\rm U}(2)\,= \,{\rm SO}(2,{\mathbb R})\times{\rm SO}(3, 
{\mathbb R})$ is the natural irreducible action on the symmetric product $S^2({\mathbb 
C}^2)\,=\, {\mathbb C}^3$ constructed using the standard
representation of ${\rm U}(2)$. This action coincides with the holonomy representation of this 
Hermitian symmetric space $D_3$; as mentioned before, $D_3$ is the noncompact dual of $Q_3$. 
The holonomy representation for $Q_3$ is the same.

Recall that the automorphism group of the noncompact dual $D_3$ is ${\rm PSO}_0(3,2)$; it is 
the subgroup of the automorphism group of $Q$ that preserves $D_3$ (which lies in $Q$ as 
the ${\rm SO}_0(3,2)$--orbit of $m_0 \,\in\, Q$). Consequently, any quotient of $D_3$ by a 
lattice in ${\rm PSO}_0(3,2)$ admits a flat holomorphic conformal structure induced by that 
of the quadric $Q$. 

Note that the compact projective threefolds admitting a holomorphic conformal structure 
(a $\text{GL}(2,{\mathbb C})$-structure) were classified in \cite{JR1}. There are in fact 
only the standard examples: finite quotients of three dimensional abelian varieties, the 
smooth quadric $Q_3$ and quotients of its noncompact dual $D_3$. In \cite{JR2}, the same 
authors classified also the higher dimensional compact projective manifolds admitting a flat 
holomorphic conformal structure; they showed that the only examples are the standard ones.

\subsection{Classification results}

Theorem \ref{KE} shows that the only compact non-flat K\"ahler-Einstein manifolds bearing a holomorphic 
$\text{GL}(2,{\mathbb C})$-structure are $Q_3$ and those covered by its noncompact dual $D_3$.

\begin{theorem}[{\cite{BD1}}]\label{KE}
Let $M$ be a compact K\"ahler--Einstein manifold, of complex dimension at least three,
endowed with a holomorphic ${\rm GL}(2,{\mathbb 
C})$-structure. Then we are in one of the following (standard) situations:
\begin{enumerate}
\item $M$ admits a finite unramified covering by a compact complex torus and the pull-back of the ${\rm 
GL}(2,{\mathbb C})$-structure on the compact complex torus is the (translation invariant) standard one.

\item $M$ is biholomorphic to the three dimensional quadric $Q_3$ equipped with its standard ${\rm GL}(2,{\mathbb 
C})$--structure.
 
\item $M$ admits an unramified cover by the three-dimensional Lie ball $D_3$ (the noncompact dual of the Hermitian 
symmetric space $Q_3$) and the pull-back of the ${\rm GL}(2,{\mathbb C})$-structure on $D_3$ is the standard one.
\end{enumerate}
\end{theorem}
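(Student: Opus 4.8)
The strategy is to combine the structure theory of compact Kähler--Einstein manifolds (via Bochner-type vanishing and the Beauville--Bogomolov decomposition) with the special rigidity of holomorphic $\text{GL}(2,{\mathbb C})$-structures established in Theorem~\ref{Fano}. First I would use Proposition~\ref{...}: a holomorphic $\text{GL}(2,{\mathbb C})$-structure gives $TM\,\cong\, S^{n-1}(E)$ for a rank-two holomorphic bundle $E$, hence $K_M\,=\,(\bigwedge^2 E^*)^{n(n-1)/2}$, so $K_M$ (equivalently $K_M^{-1}$) is a positive multiple of the line bundle $L\,:=\,\bigwedge^2 E^*$ in $\mathrm{Pic}(M)$. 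The Kähler--Einstein hypothesis means $c_1(M)$ has a sign: either $c_1(M)\,<\,0$, or $c_1(M)\,=\,0$, or $c_1(M)\,>\,0$ (Fano). These three cases will correspond, respectively, to an impossibility, to case (1) of the theorem, and to cases (2)--(3).

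\textbf{The Fano case.} If $c_1(M)\,>\,0$, then $M$ is Fano and Theorem~\ref{Fano} applies verbatim: $n\,=\,3$ and $M$ is biholomorphic to $Q_3$ with its standard $\text{GL}(2,{\mathbb C})$-structure. This is case (2). So the bulk of the remaining work is the Ricci-flat case and the ruling out of the negative case.

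\textbf{The Ricci-flat case.} If $c_1(M)\,=\,0$, pass to the finite unramified cover furnished by the Beauville--Bogomolov decomposition, so that (up to such a cover) $M\,\cong\, T\times\prod_i V_i\times\prod_j X_j$ with $T$ a complex torus, $V_i$ irreducible Calabi--Yau and $X_j$ irreducible holomorphic symplectic. The point is that $TM\,\cong\, S^{n-1}(E)$ forces the tangent bundle to be ``as reducible as a symmetric power'': for $n\,\geq\,3$ the bundle $S^{n-1}(E)$ has a flag/Jordan--Hölder structure incompatible with the holonomy of a nontrivial irreducible Calabi--Yau or hyperkähler factor (those have irreducible holonomy $\mathrm{SU}(m)$ or $\mathrm{Sp}(m)$, whereas the $\text{GL}(2,{\mathbb C})$-structure reduces the holonomy to the image of $\mathrm{GL}(2,{\mathbb C})$ in $\mathrm{GL}(n,{\mathbb C})$ under $S^{n-1}$, which is a proper subgroup behaving like an $\mathrm{SL}_2$-irrep). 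One then argues that only the flat torus factor can occur, giving case (1); alternatively, the holonomy of the Kähler--Einstein metric must lie in $S^{n-1}(\mathrm{GL}(2))$ and be a group from Berger's list, which pins down the torus. I would also invoke the $c_1\,=\,0$, non-flat sub-case leading to $D_3$: a non-flat Kähler--Einstein metric with this holonomy constraint and $c_1\,=\,0$ cannot actually be Ricci-flat unless flat, so the genuinely non-flat non-Fano examples must have $c_1\,<\,0$ — and there the universal cover is a bounded symmetric domain; matching the holonomy representation $S^2({\mathbb C}^2)$ computed in Section~\ref{Q_3} identifies it with the Lie ball $D_3$, giving case (3). (Here the identification $\mathrm{CO}(3,{\mathbb C})$-structure $\leftrightarrow$ $\text{GL}(2,{\mathbb C})$-structure in dimension three from the second Proposition of Section~3, together with the classification of bounded symmetric domains carrying a flat conformal structure, does the job.)

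\textbf{Ruling out $c_1\,<\,0$ with non-trivial story, and the main obstacle.} The negative case must be eliminated in dimension $n\,>\,3$ and reduced to $D_3$ in dimension $3$. For $n\,\geq\,3$ general, the relation $K_M\,=\,(\bigwedge^2 E^*)^{n(n-1)/2}$ combined with stability of $TM$ (automatic for Kähler--Einstein, by Kobayashi--Lübke) forces $E$ to be (poly)stable and then a Chern-class / Bogomolov-inequality computation on $S^{n-1}(E)$, together with the Kähler--Einstein condition, severely restricts the Chern classes; in the general-type case one expects this to force $n\,=\,3$ and the universal cover to be a homogeneous domain, which by the holonomy computation of Section~\ref{Q_3} is $D_3$. \emph{The main obstacle} I anticipate is precisely this last step: showing that in the $c_1\,<\,0$ case no exotic higher-dimensional examples survive and that the three-dimensional ones are exactly quotients of $D_3$. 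This requires knowing that a compact Kähler--Einstein manifold whose tangent bundle admits an $S^{n-1}(E)$-splitting and whose holonomy is therefore constrained to the $\mathrm{SL}_2$-type subgroup must have the holonomy of an irreducible Hermitian symmetric space of rank compatible with the conformal picture — i.e., one leans on the classification of holomorphic conformal structures on compact Kähler manifolds (Proposition~\ref{quadrics} and the Jahnke--Radloff results cited for $Q_n$, $D_n$) in the odd-dimensional reduction, plus a separate argument that the even-dimensional case (holomorphic symplectic, twisted symplectic) cannot be Kähler--Einstein and non-flat with negative $c_1$. Assembling these inputs cleanly, rather than any single calculation, is where the real care is needed.
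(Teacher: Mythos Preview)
The paper does not actually supply a proof of Theorem~\ref{KE}; it cites \cite{BD1} and only remarks that the methods there are specific to $\mathrm{GL}(2,\mathbb C)$-geometry, unify the odd (conformal) and even (twisted symplectic) cases, and \emph{do not rely on} the conformal/symmetric-space classification results of \cite{KO2,KO3,HM,Ye,BM}. So there is no detailed in-paper argument to compare against, but the paper's editorial comments already flag a discrepancy: your plan for the $c_1<0$ case leans precisely on those external classifications (Kobayashi--Ochiai, Jahnke--Radloff, bounded symmetric domains), whereas the original proof apparently avoids them in favour of direct $\mathrm{GL}(2)$-arguments. This does not make your outline wrong, but it is a genuinely different route from what \cite{BD1} does.

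On the substance of your proposal: the trichotomy by the sign of $c_1(M)$ is the right scaffold, and the Fano case is correctly dispatched by Theorem~\ref{Fano}. However your opening paragraph mislabels the cases (you write that $c_1<0$ is ``an impossibility'' and $c_1>0$ yields cases (2)--(3)); quotients of $D_3$ have $c_1<0$, so the negative case is exactly case~(3), not an impossibility, and you silently correct this later. More seriously, the $c_1<0$ case is where the real content lies and your sketch is not yet a proof. Saying that stability of $TM$ plus a ``Bogomolov-inequality computation on $S^{n-1}(E)$\dots\ severely restricts the Chern classes'' and ``one expects this to force $n=3$'' is a hope, not an argument: you have not explained which inequality, applied to which bundle, yields the dimension bound, nor why the universal cover in the three-dimensional negative case must be exactly $D_3$ rather than some other ball-like domain. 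Likewise in the Ricci-flat case your holonomy discussion is vague; the mechanism the paper uses in the closely related Theorem~\ref{kahler even} is the Bochner principle (holomorphic tensors are parallel for the Ricci-flat metric, so the holonomy lies in the compact form of the structure group, hence in $\mathrm{SU}(2)$), and you would do well to invoke that explicitly rather than allude to ``Jordan--H\"older structure incompatible with $\mathrm{SU}(m)$ or $\mathrm{Sp}(m)$''. As written, the proposal identifies the right case split and the right endgame, but the middle---the $c_1<0$ reduction to dimension three and to $D_3$---remains a gap.
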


In order to explain the framework of Theorem \ref{Fano} and Theorem \ref{KE} let us recall that Kobayashi and Ochiai 
proved in \cite{KO3} a similar result on holomorphic conformal structures. More precisely, they showed that compact 
K\"ahler--Einstein manifolds bearing a holomorphic conformal structure are the standard ones: quotients of tori, the 
smooth $n$-dimensional quadric $Q_n$ and the quotients of the noncompact dual $D_n$ of $Q_n$.

Also Kobayashi and Ochiai proved in \cite{KO2} that all holomorphic $G$--structures, modeled on an irreducible 
Hermitian symmetric space of rank $\geq\, 2$ (in particular, a holomorphic conformal structure), on compact 
K\"ahler-Einstein manifolds are {\it flat}. The authors of \cite{HM} proved that all holomorphic irreducible 
reductive $G$--structures on uniruled projective manifolds are flat. Since uniruled projective manifolds are simply 
connected, this implies that a uniruled projective manifold bearing a holomorphic conformal structure is 
biholomorphic to the quadric $Q_n$ with its standard structure (see also \cite{Ye} and Proposition \ref{quadrics}). 
In \cite{BM}, the following generalization was proved:

{\it All holomorphic Cartan geometries (see 
\cite{Sh} or Section \ref{sect-Cartan}) on manifolds admitting a rational curve are flat.}

The methods used in \cite{BD1} to prove Theorem \ref{Fano} and Theorem \ref{KE} does not use the results in 
\cite{BM,HM, KO2,KO3, Ye}: they are specific to the case of $\text{GL}(2,{\mathbb C})$--geometry and unify the 
twisted holomorphic symplectic case (even dimensional case) and the holomorphic conformal case (odd dimensional 
case).

While every compact complex surface of course admits a holomorphic $\text{GL}(2,{\mathbb C})$-structure, the 
situation is much more stringent in higher dimensions. The following result proved in \cite{BD2} shows that a 
compact K\"ahler manifold of even dimension $n\,\geq\, 4$ bearing a holomorphic $\text{GL}(2,{\mathbb 
C})$--structure has trivial holomorphic tangent bundle (up to finite unramified cover).

\begin{theorem}\label{kahler even}
Let $M$ be a compact K\"ahler manifold of even complex dimension $n\,\geq\, 4$ admitting a 
holomorphic ${\rm GL}(2,{\mathbb C})$--structure. Then $M$ admits a finite unramified 
covering by a compact complex torus.
\end{theorem}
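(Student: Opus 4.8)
The plan is to combine the structural consequence of a $\text{GL}(2,\mathbb C)$-structure in even dimension (from the Proposition above) with the Beauville--Bogomolov decomposition theorem for compact K\"ahler manifolds with numerically trivial canonical bundle. First I would note that, by part (ii) of the Proposition, an even-dimensional K\"ahler $M$ with a holomorphic $\text{GL}(2,\mathbb C)$-structure carries a holomorphic twisted symplectic structure: a line subbundle $\mathcal L\subset \bigwedge^2 T^*M$ whose nonvanishing local sections are pointwise nondegenerate, and in the K\"ahler case these local sections are closed. From $TM\stackrel{\sim}{\longrightarrow} S^{n-1}(E)$ one computes $K_M = (\bigwedge^2 E^*)^{\otimes n(n-1)/2}$ exactly as in the proof of Theorem \ref{Fano}, so $\mathcal L$ is identified with a root of $K_M$; more precisely the nondegenerate twisted $2$-form identifies $\mathcal L^{*}$ with $\bigwedge^{n}TM\otimes(\text{line})$ in a way that forces $K_M$ to be a torsion power of the line bundle $L:=\bigwedge^2 E$. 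The next step is to promote this to numerical triviality of $K_M$: a pointwise nondegenerate twisted $2$-form with values in a line bundle whose appropriate power is $K_M^{-1}$ forces, after passing to the finite cover trivializing the $\mathbb Z/2$-ambiguity in the $\text{GL}(2,\mathbb C)$ versus $\text{PGL}(2,\mathbb C)$ picture, a genuine holomorphic symplectic-type form on the total space of a ramified cover, and one extracts from this that $c_1(M)=0$ in $H^2(M,\mathbb R)$.

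Once $c_1(M)_{\mathbb R}=0$, the Beauville--Bogomolov--Yau theorem applies: a finite unramified cover $\widehat M$ of $M$ splits as a product $T\times \prod_i V_i \times \prod_j X_j$, where $T$ is a complex torus, the $V_i$ are irreducible holomorphic symplectic (hyperk\"ahler) manifolds, and the $X_j$ are Calabi--Yau manifolds (simply connected, trivial canonical bundle, no holomorphic $p$-forms for $0<p<\dim$). The key point then is that the $\text{GL}(2,\mathbb C)$-structure is extremely restrictive on each factor: the isomorphism $T\widehat M\cong S^{n-1}(E)$ must be compatible with the product decomposition of the tangent bundle, and $S^{n-1}(E)$ for a rank-two $E$ is an irreducible $\text{GL}(2,\mathbb C)$-module, so its restriction to each factor cannot split as a direct sum of two tangent bundles of positive-dimensional factors unless the symmetric power degenerates. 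I would argue that a factor $V_i$ or $X_j$ of dimension $d\geq 2$ would force $E|_{\text{factor}}$ to be a rank-two bundle with $S^{d-1}(E)\cong TX_j$; but then $\bigwedge^2 E$ is a square root (up to the $n(n-1)/2$ power) of $K_{X_j}$, which is trivial, so $\bigwedge^2 E$ is torsion, hence $E$ is (virtually) an $\text{SL}(2,\mathbb C)$-structure on a Calabi--Yau or hyperk\"ahler factor; for such manifolds Yau's theorem gives a Ricci-flat K\"ahler metric, and one can invoke the flatness results for irreducible reductive $G$-structures on K\"ahler--Einstein manifolds (Kobayashi--Ochiai, as recalled after Theorem \ref{KE}, or directly Theorem \ref{KE} itself in the relevant dimension) to conclude the structure is flat, and then the developing-map argument forces the factor to be a torus, contradicting that $X_j$ or $V_i$ was simply connected of positive dimension.

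Hence no $V_i$ or $X_j$ factor of positive dimension can occur, and $\widehat M = T$ is a compact complex torus; the pull-back $\text{GL}(2,\mathbb C)$-structure, being a holomorphic structure on a torus, is translation-invariant by the standard averaging argument (a holomorphic map from a torus to the affine variety $\text{GL}(n,\mathbb C)/\text{GL}(2,\mathbb C)$ descending from $R(T)$, once trivialized by translation invariance of the frame bundle, is constant), which gives the statement. The main obstacle I anticipate is the second step: rigorously upgrading ``$M$ carries a twisted holomorphic symplectic form whose twisting line bundle is a root of $K_M^{-1}$'' to ``$c_1(M)_{\mathbb R}=0$.'' The subtlety is that $\mathcal L$ is only a fractional power of $K_M$ and is not a priori numerically trivial; one must use that the nondegeneracy of the $2$-form gives an isomorphism $\mathcal L\otimes\mathcal L\cong \bigwedge^2 TM\otimes(\cdots)$ threading through all the $\bigwedge^{2k}$, and ultimately a relation in $\text{Pic}(M)\otimes\mathbb Q$ of the form $c_1(K_M)=\lambda\, c_1(\mathcal L)$ with $\lambda\ne 0$ together with $c_1(\mathcal L)=-\tfrac{1}{\lambda}c_1(K_M)$ forcing $c_1(K_M)$ to pair to zero with K\"ahler classes — i.e. one needs a positivity/semistability input (or the direct curvature computation in \cite{BD2}) to kill $c_1$. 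Everything after the Beauville--Bogomolov split is then a matter of ruling out the non-torus factors, which the irreducibility of the symmetric-power representation makes essentially formal once the K\"ahler--Einstein flatness theorems are invoked.
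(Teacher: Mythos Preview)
Your overall strategy---prove $c_1(M)_{\mathbb R}=0$, apply Beauville--Bogomolov, then rule out non-torus factors---matches the paper's. You correctly identify the first step as the delicate one; the paper dispatches it by quoting Istrati's theorem \cite[Theorem~2.5]{Is1}, which says precisely that a compact K\"ahler manifold carrying a nondegenerate holomorphic $L$-twisted $2$-form with $c_1(L)$ proportional to $c_1(M)$ has $c_1(M)=c_1(L)=0$. So that gap is real but fillable by a citation.

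The more serious gap is in how you eliminate the Calabi--Yau and hyperk\"ahler factors. Your claim that ``a factor $X_j$ of dimension $d\geq 2$ would force $E\vert_{X_j}$ to be a rank-two bundle with $S^{d-1}(E)\cong TX_j$'' is unjustified and in fact false as stated: restricting $T\widehat M\cong S^{n-1}(E)$ to a slice $\{t\}\times X_j$ gives $S^{n-1}(E\vert_{X_j})\cong TX_j\oplus{\mathcal O}^{\oplus(n-d)}$, not a $\text{GL}(2,\mathbb C)$-structure on $X_j$ itself. The irreducibility of $S^{n-1}$ as a $\text{GL}(2,\mathbb C)$-representation does \emph{not} prevent the associated vector bundle from splitting---that is a statement about the structure group of the frame bundle, not about subbundles. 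Consequently your appeal to Theorem~\ref{KE} or to Kobayashi--Ochiai on the factors is vacuous: the factors do not inherit $\text{GL}(2,\mathbb C)$-structures.

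The paper's mechanism is different and is the missing idea. After passing to a cover where $\bigwedge^2 E$ is trivial (so one has an $\text{SL}(2,\mathbb C)$-structure), the paper invokes Chevalley's theorem to realize the $\text{SL}(2,\mathbb C)$-reduction of the frame bundle as the common stabilizer of finitely many holomorphic tensors $\theta_1,\ldots,\theta_s$. On a Ricci-flat K\"ahler manifold every holomorphic tensor is parallel for the Levi-Civita connection, so the $\text{SL}(2,\mathbb C)$-subbundle is holonomy-invariant, forcing the Riemannian holonomy of $\varphi^*g$ into $\text{SU}(2)$. \emph{This} is what makes the irreducibility of $S^{n-1}(\mathbb C^2)$ bite: the holonomy acts on each tangent space through the irreducible $n$-dimensional representation of $\text{SU}(2)$, so there can be no nontrivial parallel direct-sum decomposition of $T\widehat M$, and in the Beauville--Bogomolov product the only possibilities are $\widehat M=T_l$ or $\widehat M=T_l\times(\text{K3})$. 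The K3 case is then excluded by a separate argument (either because $TT_l$ would give nonzero flat sections of $T\widehat M$, impossible under irreducible $\text{SU}(2)$-holonomy, or by a polystability analysis of $E$ restricted to the K3 fiber). You should replace your factor-by-factor argument with this holonomy reduction.
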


We will give below two different proofs of Theorem \ref{kahler even}.

\begin{proof}[{Proof of Theorem \ref{kahler even}}]
Let $E$ be a holomorphic vector bundle on $M$ such that 
$$TM\,\simeq\, S^{n-1} (E)$$ is 
an isomorphism with the symmetric product, 
defining the $\text{GL}(2,{\mathbb C})$--structure on $M$. Then
$$
TM \,=\, S^{n-1} (E)\,=\, S^{n-1} (E)^*\otimes (\bigwedge\nolimits^2E)^{\otimes (n-1)}
\,=\, (T^*M)\otimes L\, ,
$$
where $L\,=\, (\bigwedge\nolimits^2E)^{\otimes (n-1)}$. 

The above isomorphism between
$TM$ and $(T^*M)\otimes L$ produces, when $n$ is even, a holomorphic section
$$
\omega\, \in\, H^0(M,\, \Omega^2_M \otimes L)
$$
which is a fiberwise nondegenerate $2$--form with values in $L$. Writing $n\,=\, 2m$, the
exterior product
$$
\omega^m\,\in \, H^0(M,\, K_M \otimes L^m)
$$
is a nowhere vanishing section, where $K_M\,=\, \Omega^n_M$ is the canonical line bundle of $M$.

Consequently, we have $K_M\,\simeq\,(L^*)^m$, in particular, for the first real Chern class of $M$ we have $c_1(M)\,=\,mc_1(L)$.
Any Hermitian metric on $TM$ induces an associated Hermitian metric on $L^m$, and hence
produces an Hermitian metric on $L$.

We now use a result of Istrati, \cite[p.~747, Theorem 2.5]{Is1}, which says that
$c_1(M)\,=\,c_1(L)\,=\,0$.

Hence the manifold $M$ has vanishing first Chern class: it is a Calabi-Yau manifold. Recall that, 
Yau's proof of Calabi's conjecture, \cite{Ya}, endows $M$ with a Ricci flat K\"ahler metric $g$.

Using de Rham decomposition theorem and Berger's classification of the irreducible holonomy groups 
of nonsymmetric Riemannian manifolds (see \cite{Jo}, Section 3.2 and Theorem 3.4.1 in Section 
3.4) we deduce that the universal Riemannian cover $(\widetilde{M}, \,\widetilde{g}) $ of $(M,\,g)$ splits as 
a Riemannian product 
\begin{equation}\label{e1}
(\widetilde{M},\, \widetilde{g})\,=\, ({\mathbb C}^l,\, g_0) \times
(M_1,\, g_1) \times \cdots \times (M_p,\, g_p)\, ,
\end{equation}
where $({\mathbb C}^l, \, g_0)$ is the standard flat complete K\"ahler manifold and $(M_i,\, 
g_i)$ is an irreducible Ricci flat K\"ahler manifold of complex dimension $r_i\, \geq\, 2$, for every $1\, 
\leq\, i\, \leq\, p$. The holonomy of each $(M_i,\, g_i)$ is either $\text{SU}(r_i)$ 
or the symplectic group ${\rm Sp}(\frac{r_i}{2})$, where $r_i\, =\, \dim_{\mathbb C} M_i$
(in the second case $r_i$ is even). Notice, in 
particular, that symmetric irreducible Riemannian manifolds of (real) dimension at least two 
are never Ricci flat. For more details, the reader is referred to \cite[Theorem 1]{Be}
and \cite[p.~124, Proposition 6.2.3]{Jo}.

As a consequence of Cheeger--Gromoll theorem one can deduce the Beauville--Bogomolov decomposition theorem (see 
\cite[Theorem 1]{Be} or \cite{Bo1}) asserting that there is a finite unramified covering
$$
\varphi\, :\, \widehat{M}\, \longrightarrow\, M\, ,
$$
such that
\begin{equation}\label{e2}
(\widehat{M},\, \varphi^*g)\, =\, (T_l,\, g_0) \times
(M_1,\, g_1) \times \cdots \times (M_p,\, g_p)\, ,
\end{equation}
where $(M_i,\, g_i)$ are as in \eqref{e1} and $(T_l,\, g_0)$ is a flat compact complex torus of dimension
$l$. Of course the pull-back K\"ahler metric $\varphi^*g$ is Ricci--flat because $g$ is so.

We obtain that the initial holomorphic $\text{GL}(2,{\mathbb C})$--structure on $M$ induces a holomorphic 
$\text{SL}(2,{\mathbb C})$--structure on a finite unramified cover of $\widehat{M}$ in 
\eqref{e2}. Indeed, $T\widehat{M}\,=\, S^{n-1}(\varphi^* E)$ and since the canonical 
bundle $K_{\widehat{M}}$ is holomorphically trivial, the holomorphic line bundle $\bigwedge^2 E$ admits a finite multiple which is trivial (in this situation we also say that $\bigwedge^2 E$ is a {\it torsion} line bundle). 
Hence on a finite unramified cover of $\widehat{M}$ (still denoted by 
$\widehat{M}$ for simplicity) we can work with $\bigwedge^2 E$ being 
holomorphically trivial.

This $\text{SL}(2,{\mathbb C})$--structure on $\widehat{M}$ provides a holomorphic reduction $R'(\widehat{M})\, 
\subset\, R(\widehat{M})$ of the structure group of the frame bundle $R(\widehat{M})$ (from $\text{GL}(n,{\mathbb 
C})$) to $\text{SL}(2,{\mathbb C})$.

There is a finite set of holomorphic tensors $\theta_1, \ldots, \theta_s$ on $\widehat{M}$ satisfying the condition 
that the $\text{SL}(2,{\mathbb C})$--subbundle $R'(\widehat{M})\, \subset\,R(\widehat{M})$ consists of those frames 
that pointwise preserve all tensors $\theta_i$. This is deduced from Chevalley's theorem which asserts that there 
exists a finite dimensional linear representation $W$ of $\text{GL}(n,{\mathbb C})$, and an element $$\theta_0 
\,\in\, W\, ,$$ such that the stabilizer of the line ${\mathbb C} \theta_0$ is the image of the homomorphism 
$\text{SL}(2,{\mathbb C})\,\longrightarrow\, \text{GL}(n,{\mathbb C})$ defining the $\text{SL}(2, {\mathbb 
C})$-structure (see \cite[p.~80, Theorem 11.2]{Hu}, \cite[p.~40, Proposition 3.1(b)]{DMOS}; since 
$\text{SL}(2,{\mathbb C})$ does not have a nontrivial character, the line ${\mathbb C} \theta_0$ is fixed 
pointwise.

The group $\text{GL}(n,{\mathbb C})$ being reductive, we decompose $W$ as a direct sum $\bigoplus_{i=1}^s W_i$ of 
irreducible representations. Now, since any irreducible representation $W_i$ of the reductive group 
$\text{GL}(n,{\mathbb C})$ is a factor of a representation $({\mathbb C}^n)^{\otimes p_i} \otimes (({\mathbb 
C}^n)^*)^{\otimes q_i},$ for some integers $p_i,\,q_i\, \geq \, 0$ \cite[p.~40, Proposition 3.1(a)]{DMOS}, the 
above element $\theta_0$ gives rise to a finite set $\theta_1,\, \ldots ,\, \theta_s$ of holomorphic tensors
\begin{equation}\label{th}
\theta_i \,\in\, H^0(\widehat{M},\,(T{\widehat{M}})^{\otimes p_i}\otimes (T^*{\widehat{M}})^{\otimes q_i})
\end{equation}
with $p_i,q_i\, \geq \, 0$. By construction, $\theta_1, \ldots, \theta_s$ are simultaneously stabilized exactly by the frames lying in $R'(\widehat{M})$.

It is known that the parallel transport on $\widehat{M}$ for the Levi--Civita connection associated to the Ricci--flat K\"ahler 
metric $\varphi^*g$ in \eqref{e2}. preserves any 
holomorphic tensor on $\widehat{M}$ \cite[p.~50, Theorem 2.2.1]{LT}. In particular, $\theta_i$ in 
\eqref{th} are all parallel with respect to the Levi-Civita connection of $\varphi^*g$. We conclude that the subbundle 
$R'(\widehat{M})\, \subset\, R(\widehat{M})$ defining the holomorphic $\text{SL}(2,{\mathbb C})$-structure 
 is invariant under the parallel transport of the Levi--Civita 
connection for $\varphi^*g$. This implies that the holonomy group of $\varphi^*g$ lies in the 
maximal compact subgroup of $\text{SL}(2,{\mathbb C})$. Consequently, the holonomy group of $\varphi^*g$ 
lies in $\text{SU}(2)$.

From \eqref{e2} it follows that the holonomy of $\varphi^*g$ is
\begin{equation}\label{e3}
\text{Hol}(\varphi^*g)\,=\, \prod_{i=1}^p \text{Hol}(g_i)\, ,
\end{equation}
where $\text{Hol}(g_i)$ is the holonomy of $g_i$. As noted earlier,
\begin{itemize}
\item either $\text{Hol}(g_i)\,=\, \text{SU}(r_i)$, with $\dim_{\mathbb C} M_i \,=\, r_i\, \geq\, 2$, or

\item $\text{Hol}(g_i)\,=\, {\rm Sp}(\frac{r_i}{2})$, where $r_i\, =\, \dim_{\mathbb C} M_i$ is
even.
\end{itemize}
Therefore, the above observation, that $\text{Hol}(\varphi^*g)$ is contained in $\text{SU}(2)$,
and \eqref{e3} together imply that
\begin{enumerate}
\item either $(\widehat{M},\, \varphi^*g)\, =\, (T_l,\, g_0)$, or

\item $(\widehat{M},\, \varphi^*g)\, =\, (T_l,\, g_0) \times (M_1,\, g_1)$, where $M_1$ is a K3
surface equipped with a Ricci--flat K\"ahler metric $g_1$.
\end{enumerate}

If $(\widehat{M},\, \varphi^*g)\, =\, (T_l,\, g_0)$, then then proof of the theorem evidently is complete.

Therefore, we assume that
\begin{equation}\label{a1}
(\widehat{M},\, \varphi^*g)\, =\, (T_l,\, g_0) \times (M_1,\, g_1)\, ,
\end{equation}
where $M_1$ is a K3 surface equipped with a Ricci--flat K\"ahler metric $g_1$. Note that
$l\, \geq\, 2$ (because $l+2\,=\, n\, \geq\, 4$) and $l$ is even (because $n$ is so).

At this stage there are two different ways to terminate the proof. We will give below the two of them.

I){\it First Proof.}

Since $\text{Hol}(g_1)\,=\, \text{SU}(2)$, we get from (\ref{e3}) that $\text{Hol}(\varphi^*g)\,=\, \text{SU}(2)$. The holonomy of $\varphi^*g$ is 
the image of the homomorphism
\begin{equation}\label{h0}
h_0\, :\, \text{SU}(2)\, \longrightarrow\, \text{SU}(n)
\end{equation}
given by the $(n-1)$--th symmetric power of the standard representation. The action of
$h_0(\text{SU}(2))$ on ${\mathbb C}^n$, obtained by restricting the standard action of $\text{SU}(n)$,
is irreducible. In particular, there are no nonzero
$\text{SU}(2)$--invariants in ${\mathbb C}^n$.

On the other hand we have that:
\begin{itemize}
\item the direct summand of $T \widehat{M}$ given by the tangent bundle $TT_l$ is
preserved by the Levi--Civita connection on $T \widehat{M} $ corresponding to $\varphi^*g$, and

\item this direct summand of $T \widehat{M}$ given by $TT_l$ is generated by flat sections of $TT^l$.
\end{itemize}

Since $T \widehat{M}$ does not have any flat section, we conclude that $l=0$: a contradiction. This terminates the first proof.

II) {\it Second proof}.

Fix a point $t\, \in\, T_l$. The holomorphic vector bundle over $M_1$ obtained by restricting $\varphi^* E$
to $\{t\}\times M_1\, \subset\, T_l\times M_1$ will be denoted by $F$. Restricting the isomorphism
$$
T\widehat{M}\,\stackrel{\sim}{\longrightarrow}\, S^{l+1}(\varphi^* E)
$$
to $\{t\}\times M_1\, \subset\, T_l\times M_1$, from \eqref{a1} we conclude that
\begin{equation}\label{a2}
S^{l+1}(F)\,=\, T{M_1}\oplus {\mathcal O}^{\oplus l}_{M_1}\, ;
\end{equation}
note that $(T{\widehat{M}})\vert_{\{t\}\times M_1}\,=\, TM_1\oplus {\mathcal O}^{\oplus l}_{M_1}$.
The vector bundle $TM_1$ is polystable of degree zero (we may use the K\"ahler structure $g_1$ on
$M_1$ to define the degree), because $M_1$ admits a K\"ahler--Einstein metric. Hence
$T{M_1}\oplus {\mathcal O}^{\oplus(l+1)}_{M_1}$ is polystable of degree zero. This implies that
$\text{End}(T{M_1}\oplus {\mathcal O}^{\oplus(l+1)}_{M_1})$ is polystable of degree zero, because
for any Hermitian--Einstein structure on $T{M_1}\oplus {\mathcal O}^{\oplus(l+1)}_{M_1}$ (which exists by
\cite{UY}), the Hermitian structure on $\text{End}(T{M_1}\oplus {\mathcal O}^{\oplus(l+1)}_{M_1})$ 
induced by it is also Hermitian--Einstein. On the other hand, the holomorphic vector bundle $\text{End}(F)$ 
is a direct summand of $\text{End}(S^{l+1}(F))$. Since $\text{End}(S^{l+1}(F))\,=\,
\text{End}(T{M_1}\oplus {\mathcal O}^{\oplus(l+1)}_{M_1})$ is polystable of degree zero, this
implies that the holomorphic vector bundle $\text{End}(F)$ is also polystable of degree zero.

Since $\text{End}(F)$ is polystable, it follows that $F$ is polystable \cite[p.~224, Corollary 3.8]{AB}.

Let $H$ be a Hermitian--Einstein metric on $F$. The holonomy of the Chern connection on $F$ associated to $H$
is contained in $\text{SU}(2)$, because $\bigwedge^2 F\, =\, {\mathcal O}_{M_1}$ (since $\bigwedge^2 E$ is trivial). Let $\text{Hol}(H)$
denoted the holonomy of the Chern connection on $F$ associated to $H$. We note that $H$ is connected because
$M_1$ is simply connected. Therefore, either $\text{Hol}(H)\,=\, \text{SU}(2)$ or
$\text{Hol}(H)\,=\, \text{U}(1)$ (since $TM_1$ is not trivial, from
\eqref{a2} it follows that $F$ is not trivial and hence the holonomy of $H$ cannot be trivial).

First assume that $\text{Hol}(H)\,=\, \text{SU}(2)$. Let $\mathcal H$ denote the Hermitian--Einstein
metric on $S^{n-1}(F)$ induced by $H$. Since $\text{Hol}(H)\,=\, \text{SU}(2)$, the holonomy
$\text{Hol}({\mathcal H})$ of the Chern connection on $S^{n-1}(F)$ associated to $\mathcal H$ is
the image of the homomorphism
\begin{equation}\label{h01}
h_0\, :\, \text{SU}(2)\, \longrightarrow\, \text{SU}(n)
\end{equation}
given by the $(n-1)$--th symmetric power of the standard representation. The action of
$h_0(\text{SU}(2))$ on ${\mathbb C}^n$, obtained by restricting the standard action of $\text{SU}(n)$,
is irreducible. From this it follows that
$$
H^0(M_1,\, S^{n-1}(F))\, =\, 0
$$
because any holomorphic section of $S^{n-1}(F)$ is flat with respect to the
Chern connection on $S^{n-1}(F)$ associated to $\mathcal H$ \cite[p.~50, Theorem 2.2.1]{LT}.
On the other hand, from \eqref{a2} we have
\begin{equation}\label{a4}
H^0(M_1,\, S^{n-1}(F))\, =\, {\mathbb C}^l\, .
\end{equation}
In view of this contradiction we conclude that $\text{Hol}(H)\,\not=\, \text{SU}(2)$.

So assume that $\text{Hol}(H)\,=\, \text{U}(1)$. Then
\begin{equation}\label{a3}
F\,=\, {\mathcal L}\oplus {\mathcal L}^*\, ,
\end{equation}
where ${\mathcal L}$ is a holomorphic line bundle on $M_1$. Note that $\text{degree}({\mathcal L})\,=\, 0$,
because $F$ is polystable. From \eqref{a3} it follows that
$$
S^{n-1}(F)\,=\, \bigoplus_{j=0}^{n-1} {\mathcal L}^{1-n+2j}\, .
$$
Now using \eqref{a4} we conclude that $H^0(M_1, \, {\mathcal L}^k)\, \not=\, 0$ for some nonzero integer $k$.
Since $\text{degree}({\mathcal L})\,=\, 0$, this implies that the holomorphic line bundle $\mathcal L$ is trivial;
note that the Picard group $\text{Pic}(M_1)$ is torsionfree. Hence $S^{n-1}(F)$ is trivial. Since
$T{M_1}$ is not trivial, this contradicts \eqref{a2} (see \cite[p.~315, Theorem 2]{At0}).

Therefore, \eqref{a1} can't occur. This completes the second proof of the theorem.
\end{proof} 

Recall that a compact K\"ahler manifold of odd complex dimension bearing a holomorphic $\text{SL}(2,{\mathbb 
C})$--structure also admits a holomorphic Riemannian metric and inherits of the associated holomorphic (Levi-Civita) 
 connection on the holomorphic tangent bundle. Those manifolds are known to have vanishing Chern classes 
\cite{At}. Indeed, following Chern-Weil theory one classically computes real Chern classes of the holomorphic 
tangent bundle of a manifold $M$ using a hermitian metric on it. This provides representatives of the Chern class 
$c_k(TM)\,\in\, \text{H}^{2k}(M,\, \mathbb R)$ which are smooth forms on $M$ of type $(k,\,k)$. Starting with a 
holomorphic connection on $TM$ and doing the same formal computations, one gets another representative of $c_k(TM)$ 
which is a holomorphic form: in particular, it is of type $(2k,\,0)$. Classical Hodge theory says that on K\"ahler 
manifolds nontrivial real cohomology classes do not have representatives of different types. This implies the 
vanishing of the real Chern class: $c_k(TM)\,=\,0$ for all $k$.

But a compact K\"ahler manifold $M$ with vanishing first two Chern classes ($c_1(TM)\,=\,c_2(TM)\,=\,0$) is known to be 
covered by a compact complex torus \cite{IKO}. Indeed, using the vanishing of the first Chern class of $M$, Yau's 
proof of Calabi conjecture (which is the key ingredient to obtain the result) \cite{Ya} endows $M$ with a Ricci flat 
K\"ahler metric. The vanishing of the second Chern class implies that the Ricci flat metric is flat (i.e., it has 
vanishing sectional curvature). Hence $M$ admits a flat K\"ahler metric. Since $M$ is compact, Hopf--Rinow theorem 
implies the flat metric is complete, meaning the universal cover of $M$ is isometric with $\CC^n$ endowed with its 
standard translation invariant K\"ahler metric. By Bieberbach's theorem $M$ admits a finite cover which is a 
quotient of $\CC^n$ by a lattice of translations.

Therefore, Theorem \ref{kahler even} has the following corollary.

\begin{corollary}\label{sl2}
Let $M$ be a compact K\"ahler manifold of complex dimension $n \,\geq\, 3$
bearing a holomorphic ${\rm SL}(2,{\mathbb C})$--structure. Then $M$ admits a finite unramified
covering by a compact complex torus.
\end{corollary}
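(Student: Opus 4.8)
The plan is to split the argument according to the parity of the complex dimension $n$, since the two cases draw on different results established above. Suppose first that $n$ is even; because $n\,\geq\, 3$ this forces $n\,\geq\, 4$. As ${\rm SL}(2,{\mathbb C})\,\subset\, {\rm GL}(2,{\mathbb C})$ and the homomorphism ${\rm SL}(2,{\mathbb C})\,\longrightarrow\,{\rm GL}(n,{\mathbb C})$ defining an ${\rm SL}(2,{\mathbb C})$--structure is, by construction, the restriction of the corresponding ${\rm GL}(2,{\mathbb C})$--homomorphism (equivalently, in bundle terms, $TM\,\simeq\, S^{n-1}(E)$ with $\bigwedge^2 E$ trivial is a special instance of $TM\,\simeq\, S^{n-1}(E)$ with $E$ an arbitrary rank two bundle), a holomorphic ${\rm SL}(2,{\mathbb C})$--structure on $M$ is a fortiori a holomorphic ${\rm GL}(2,{\mathbb C})$--structure on $M$. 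Hence Theorem~\ref{kahler even} applies verbatim and produces a finite unramified covering of $M$ by a compact complex torus, so nothing further is needed in this case.

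Now suppose $n$ is odd, so $n\,\in\,\{3,5,7,\ldots\}$. By the observations recalled just before the statement, the holomorphic ${\rm SL}(2,{\mathbb C})$--structure endows $M$ with a holomorphic Riemannian metric $g$, that is, a fiberwise non-degenerate holomorphic section of $S^2(T^*M)$. I would then pass to its holomorphic Levi--Civita connection: the Koszul formula produces a torsion-free holomorphic connection $\nabla$ on $TM$ compatible with $g$, and this is in particular a holomorphic connection on the holomorphic tangent bundle. Invoking Atiyah's obstruction --- equivalently, the Chern--Weil argument recalled above, which represents $c_k(TM)$ by a closed form of type $(2k,0)$ that must vanish on the compact K\"ahler manifold $M$ by Hodge theory --- all real Chern classes of $TM$ vanish \cite{At}; in particular $c_1(TM)\,=\,c_2(TM)\,=\,0$. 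Finally I would apply the classical consequence of Yau's solution of the Calabi conjecture together with Bieberbach's theorem, as recalled above (see \cite{IKO, Ya}): a compact K\"ahler manifold with $c_1\,=\,c_2\,=\,0$ carries a flat K\"ahler metric and therefore admits a finite unramified covering by a compact complex torus. This settles the odd case and completes the proof.

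As for where the real content lies: the even case is the substantial one, resting entirely on the full strength of Theorem~\ref{kahler even}, whose proof was the long part; the odd case, by contrast, is merely the packaging of the holomorphic-connection vanishing argument with the torus characterization via vanishing Chern classes, and presents no genuine obstacle. The only minor point worth checking is that the holomorphic Riemannian metric obtained in the odd case is defined globally on $M$ (it comes from a global reduction of the frame bundle $R(M)$ to ${\rm O}(n,\CC)$), so that its Levi--Civita connection, being uniquely determined by $g$, is likewise a global holomorphic connection on $TM$ --- which is exactly what the Chern-class vanishing argument requires.
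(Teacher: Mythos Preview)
Your proof is correct and follows essentially the same approach as the paper: the even case is reduced to Theorem~\ref{kahler even} via the inclusion ${\rm SL}(2,\mathbb C)\subset{\rm GL}(2,\mathbb C)$, and the odd case uses the holomorphic Riemannian metric and its Levi--Civita connection to force $c_1=c_2=0$, then invokes the torus characterization of \cite{IKO,Ya}. The paper presents exactly this argument in the paragraph preceding the corollary rather than as a formal proof.
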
 

\section{Holomorphic Cartan geometry}\label{sect-Cartan}

In his celebrated Erlangen's address (delivered in  1872), F. Klein defined a {\it geometry} as a manifold $X$ equipped with a 
transitive action of a Lie group $G$. The Lie group $G$ is the symmetry group of the geometry.
Notice that any choice of a base point $p_0 \,\in\, X$ identifies the manifold $X$ with the homogeneous space $G/H$, 
where $H$ is the closed subgroup of $G$ stabilizing $p_0$.

The Euclidean geometry is a first important example of geometry in the sense of Klein. The Euclidean symmetry group 
is the group of all motions. The stabilizer of the origin is the orthogonal group $H \,=\, \text{O}(n, {\mathbb R})$ 
and the group of Euclidean motions $G \,=\, \text{O}(n, \mathbb{R})\ltimes {\mathbb R}^n$ is a semi-direct product 
of the orthogonal group with the translation group.

The holomorphic version of the above geometry is the {\it complex Euclidean space} $(X ={\mathbb C}^n,\, dz_1^2 + 
\ldots + dz_n^2)$. Here $dz_1^2+ \ldots + dz_n^2$ is the nondegenerate complex quadratic form (of maximal rank $n$) 
and $H\,=\,\text{O}(n , \mathbb{C})$ is the corresponding complex orthogonal group. The symmetry group 
$G\,=\,\text{O}(n, \mathbb{C}) \ltimes {\mathbb C}^n$ is the group of complex Euclidean motions. With the point of 
view of $G$-structure, the complex quadratic form $dz_1^2 + \ldots + dz_n^2$ defines a flat holomorphic ${\rm O}(n, 
\CC)$-structure in the sense of Section \ref{section G-struct} and hence a flat holomorphic Riemannian metric (see 
\cite{Le,Gh,Du,DZ,BD2}). Any flat holomorphic Riemannian metric is locally isomorphic with the complex Euclidean 
space.

Another classical geometry encompassed in Klein's definition is the {\it complex affine space} $X\,=\,{\mathbb C}^n$. 
The symmetry group is the complex affine group $G\,=\,\text{GL}(n,\mathbb C) \ltimes {\mathbb C}^n$. This symmetry group 
preserves lines parametrized at constant speed.
 
Klein's definition contains also the {\it complex projective space}. The symmetry group of the complex projective 
space ${\mathbb C}{\rm P}^n$ is the complex projective group $\text{PGL}(n+1, \mathbb C)$. This symmetry group preserves 
lines.
 
As a unifying generalization of the above concept of Klein geometry and of the Riemannian geometry, E. Cartan 
elaborated the framework of what we now call {\it Cartan geometry}. A Cartan geometry is an infinitesimal version of 
a Klein geometry: this generalizes Riemann's construction of a Riemannian metric infinitesimally modeled on the 
Euclidean space. We will see that any Cartan geometry has an associated curvature tensor measuring the 
infinitesimal variation of the Cartan geometry with respect to the corresponding Klein's model. The curvature tensor 
vanishes exactly when the Cartan geometry is locally isomorphic to a Klein geometry. This generalizes to the Cartan 
geometry background the Riemannian notion of curvature: recall that the Riemannian curvature vanishes exactly when 
the metric is flat and hence locally isomorphic with the Euclidean space.
 
The Cartan geometries for which the curvature tensor vanishes identically are called {\it flat}.

\subsection{Classical case} Let us now introduce the classical definition of a Cartan geometry
in the complex analytic category (the reader is 
referred to \cite{Sh} for more details).

The model of the Cartan geometry is a Klein geometry $G/H$, where $G$ is a connected complex Lie group and $H\, 
\subset\, G$ is a connected complex Lie subgroup. The complex Lie algebras of $G$ and $H$ are denoted by 
$\mathfrak g$ and $\mathfrak h$ respectively.

Then we have the following definition due to Cartan and formalized by Ehresmann.

\begin{definition} \label{def Cartan}
A holomorphic Cartan geometry with model $(G,\,H)$ on a complex manifold $M$ is a holomorphic principal $H$--bundle 
$\pi \,: E_H\, \longrightarrow\, M$ equipped with a $\mathfrak g$--valued holomorphic $1$--form $\omega \in 
\text{H}^0(E_H,\, \mathfrak g)$ satisfying:
\begin{enumerate}
\item $\omega \, :\, TE_H\, \longrightarrow\, E_H\times{\mathfrak g}$ defines a vector bundle
isomorphism;

\item $\omega$ is $H$--equivariant with $H$ acting on $\mathfrak g$ via conjugation;

\item the restriction of $\omega$ to each fiber of $\pi$ coincides with the Maurer--Cartan form
associated to the action of $H$ on $E_H$.
\end{enumerate}
\end{definition}

A $\omega$-constant vector field on $E_H$ is a section of $TE_H$ which is the preimage through $\omega$ of a fixed 
element in $\mathfrak g$. Condition (1) in Definition \ref{def Cartan} means that the holomorphic tangent bundle 
$TE_H$ is trivialized by $\omega$-constant vector fields. This condition implies, in particular, that the complex 
dimension of the model space $G/H$ is the same as the complex dimension of $M$.

Condition (3) in Definition \ref{def Cartan} implies that the $\omega$-constant vector fields corresponding to 
preimages of elements in $\mathfrak h\,\subset\, \mathfrak g$ form a family of holomorphic vector fields on $E_H$  trivializing 
the holomorphic vertical tangent space (the kernel of $d\pi$ in $TE_H$). The restriction of $\omega$ to
the vertical tangent bundle identifies the vertical tangent bundle with the trivial
bundle $E_H \times \mathfrak h\, \longrightarrow\, E_H$.

Moreover, condition (2) in Definition \ref{def Cartan} implies that if $Z_1,\, Z_2\, \in\, \text{H}^0(E_H,\, TE_H)$ 
are two $\omega$-constant vector fields such that one of them, say $Z_1$, is vertical, then $\omega(\lbrack Z_1,\, 
Z_2])\,=\,\lbrack \omega(Z_1),\, \omega(Z_2) \rbrack_{\mathfrak{g}}$.

In fact, $\omega$ is a Lie algebra isomorphism from the family of $\omega$-constant vector fields to $\mathfrak g$ 
if and only if the holomorphic $2$-form $d \omega + \frac{1}{2} \lbrack \omega,\, \omega \rbrack_{\mathfrak g}$ 
vanishes identically. In this case the Cartan geometry defined by $\omega$ is called {\it flat}. The holomorphic 
$2$-form $$K(\omega)\,:=\, d \omega + \frac{1}{2} \lbrack \omega,\, \omega \rbrack_{\mathfrak g} \,\in\, 
\text{H}^0(E_H,\, \Omega^2_{E_H})$$ is called the curvature tensor of the Cartan geometry. Note that the above 
observation imply that $K(\omega)$ vanishes on all pairs $(Z_1,\,Z_2)$, where $Z_1$ is a $\omega$-constant vertical 
holomorphic vector field and $Z_2$ is any $\omega$-constant  holomorphic  vector field. Consequently, $K(\omega)$ is the 
pull-back, through $\pi$, of an element of $\text{H}^0(M, \,\Omega^2(M) \otimes \text{ad}(E_G))$; here $\text{ad}(E_G)$ is the 
vector bundle associated to $E_H$ through the action of $H \,\subset\, G$ on $\mathfrak g$
via the adjoint representation. In 
particular, any holomorphic Cartan geometry on a complex manifold of dimension one (complex curve) is necessarily 
flat.

Definition \ref{def Cartan} should be seen as an infinitesimal version of the principal $H$--bundle given by the 
quotient map $G \,\longrightarrow\, G/H$. The form $\omega$ generalizes the left-invariant Maurer-Cartan form 
$\omega_G$ of $G$. It should be recalled here that the Maurer-Cartan form $\omega_G$ is the tautological one-form on 
$G$ which identifies left-invariant vector fields on $G$ with elements in the Lie algebra $\mathfrak g$. It 
satisfies the so-called Maurer-Cartan equation $d \omega + \frac{1}{2} \lbrack \omega,\, \omega \rbrack_{\mathfrak 
g}\,=\,0$ (this is a straightforward consequence of the Lie-Cartan derivative formula, see, for instance, \cite{Sh}).

We recall a classical result of Cartan (see \cite{Sh}):

\begin{theorem}[{Cartan}]Let $(E_H,\,\omega)$ be a Cartan geometry with model $(G,\,H)$. The curvature $K(\omega)\,=\,
d\omega + \frac{1}{2} \lbrack \omega,\,\omega \rbrack_{\mathfrak g}$ vanishes identically if and only if $(E_H, \omega 
)$ is locally isomorphic to $(G \longrightarrow G/H,\, \omega_G)$, where $\omega_G$ is the left-invariant Maurer-Cartan form on 
the (right) principal $H$--bundle $G \,\longrightarrow\, G/H$.
\end{theorem}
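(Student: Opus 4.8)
The plan is to prove the two implications separately, the hard direction being the reconstruction of a local isomorphism with $(G\to G/H,\,\omega_G)$ from the vanishing of the curvature.

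\smallskip

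First I would treat the easy direction: if $(E_H,\,\omega)$ is locally isomorphic to $(G\to G/H,\,\omega_G)$, then since a local isomorphism of Cartan geometries pulls back the Maurer--Cartan form to $\omega$, and $\omega_G$ satisfies the Maurer--Cartan equation $d\omega_G + \frac12[\omega_G,\,\omega_G]_{\mathfrak g}=0$ (a consequence of the Cartan structure equation for left-invariant forms), the curvature $K(\omega)$ pulls back from the identically vanishing curvature on $G$, hence vanishes on $E_H$.

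\smallskip

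For the converse — the substantial part — assume $K(\omega)\equiv 0$, i.e.\ $d\omega + \frac12[\omega,\,\omega]_{\mathfrak g}=0$. The key observation, already noted in the text, is that $\omega$ then becomes a Lie algebra isomorphism from the space of $\omega$-constant vector fields on $E_H$ to $\mathfrak g$: condition (1) makes it a linear isomorphism, and the vanishing of $d\omega + \frac12[\omega,\,\omega]_{\mathfrak g}$ forces $\omega([Z_1,\,Z_2]) = [\omega(Z_1),\,\omega(Z_2)]_{\mathfrak g}$ for all $\omega$-constant $Z_1,\,Z_2$ (expand $d\omega(Z_1,Z_2) = Z_1\omega(Z_2) - Z_2\omega(Z_1) - \omega([Z_1,Z_2])$ and use that $\omega(Z_i)$ is constant). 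Thus $E_H$ carries a locally free holomorphic action of the Lie algebra $\mathfrak g$, integrating (after possibly passing to a neighborhood and using that the vertical $\mathfrak h$-fields already integrate to the $H$-action) to a local holomorphic action of $G$. Fixing a point $e_0\in E_H$, the orbit map $g\mapsto g\cdot e_0$ is a local biholomorphism $G\supset V \to E_H$ near the identity, $H$-equivariant by construction, descending to a local biholomorphism $G/H \to M$; and since the $\mathfrak g$-action is by $\omega$-constant fields, this map pulls $\omega$ back to $\omega_G$. This is the desired local isomorphism.

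\smallskip

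The main obstacle is the passage from the infinitesimal ($\mathfrak g$-action by vector fields) to the local ($G$-action): one must check that the distribution spanned by the $\omega$-constant fields, together with the flow parameters, genuinely integrates to a $G$-action on a neighborhood, which is where the Lie algebra homomorphism property and the Frobenius/Lie--Palais integration for local group actions enter; one also needs to verify carefully that the reconstructed action is compatible with the principal $H$-structure (using condition (3), which pins the vertical part of $\omega$ to the Maurer--Cartan form of $H$). Since this is a classical theorem of Cartan, I would not reproduce the full integration argument but cite \cite{Sh} for the details, emphasizing only that $K(\omega)=0$ is exactly the integrability condition that makes $\omega$ a Lie-algebra isomorphism and hence the geometry locally homogeneous.
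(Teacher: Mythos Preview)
The paper does not actually prove this theorem: it is stated as a classical result of Cartan with a reference to \cite{Sh}, and no proof is given. So there is no ``paper's own proof'' to compare your proposal against. Your closing remark --- that you would cite \cite{Sh} for the full integration argument rather than reproduce it --- is in fact exactly what the paper does.

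Your sketch of the argument is correct in substance. The easy direction is handled properly. For the converse, your route via showing that $\omega$ becomes a Lie algebra isomorphism from $\omega$-constant fields to $\mathfrak g$ and then integrating to a local $G$-action is one valid approach; the presentation in \cite{Sh} is organized slightly differently (via Frobenius integrability of the distribution $\ker(\pi_1^*\omega - \pi_2^*\omega_G)$ on $E_H\times G$, whose leaves are graphs of the desired local isomorphisms), but the content is the same and your identification of the ``main obstacle'' --- passing from the infinitesimal $\mathfrak g$-action to a local $G$-action compatible with the $H$-structure --- is accurate.
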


In \cite{Eh} Ehresmann studied manifolds $M$ equipped with a flat Cartan geometry. More precisely, a flat Cartan 
geometry on $M$, gives a $(G,\,X=G/H)$-structure on $M$ in the following sense described by Ehresmann (in \cite{Eh} 
the terminology is that of a {\it locally homogeneous space}).

\begin{definition}\label{def G,X}
A holomorphic $(G,\,X)$-structure on a complex manifold $M$ is given by an open cover $(U_i)_{i \in I}$ of $M$ with 
holomorphic charts $\phi_i \,:\, U_i \,\longrightarrow\, X$ such that the transition maps $\phi_i \circ \phi_j^{-1}
\,:\, \phi_j(U_i \cap U_j)\,\longrightarrow\, \phi_i(U_i \cap U_j)$ are given (on each connected component) by
the restriction of an element $g_{ij} \,\in\, G$.
\end{definition}
 
Recall that the $G$-action on $X$ is holomorphic. Also note that any geometric feature of $X$ which is invariant 
by the symmetry group $G$ has an intrinsic meaning on the manifold $M$ equipped with a $(G,\,X)$-structure.

Consider a $(G,\,X)$-structure on a manifold $M$ (as in Definition \ref{def G,X}). Then the pull-back of the 
$(G,\,X)$-structure to the universal cover $\widetilde{M}$ of $M$ gives rise to a local biholomorphism $\text{dev}\,: 
\,\widetilde{M} \,\longrightarrow\, X$ which is called the {\it developing map} (of the $(G,\,X)$-structure) and to a 
group homomorphism form the fundamental group of $M$ to $G$ called the {\it monodromy morphism}. The developing map 
is uniquely defined up to a post-composition by an element in $G$ (acting on $X$) and the monodromy morphism is 
well-defined up to an inner automorphism of $G$. Moreover, the developing map is equivariant with respect to the action of 
the fundamental group of $M$ on $\widetilde{M}$ (by deck transformations) and on $X$ (through the monodromy 
morphism). A more detailed construction of the developing map of a flat Cartan geometry will be given in Section 
\ref{section foliated Atyiah} focusing on the foliated case.

By Cartan's equivalence principle, many important geometric structures can be seen as $G$-structures and also as 
Cartan geometries. For example, a holomorphic Riemannian metric on a complex manifold of complex dimension $n$ is a 
holomorphic $\text{O}(n, \mathbb{C})$-structure on $M$ and also a holomorphic Cartan geometry with model 
$(G\,=\,\text{O}(n, \mathbb{C}) \ltimes {\mathbb C}^n,\, H\,=\,\text{O}(n, \mathbb{C})).$ Those two points of view 
are equivalent; in particular the notion of flatness is the same in these two descriptions of the holomorphic 
Riemannian metric.

Also a holomorphic Cartan geometry whose model is the complex affine space (meaning $G\,=\,\text{GL}(n,\mathbb C) 
\ltimes {\mathbb C}^n$ and $H\,=\,\text{GL}(n, {\mathbb C})$) is actually a holomorphic affine connection on $M$. 
This Cartan geometry is flat if and only if the holomorphic affine connection is flat and torsion-free. In this case 
$M$ is endowed with a holomorphic affine structure (i.e., a holomorphic $(G,\,X)$--structure, with 
$X\,=\,G/H\,=\,\CC^n$ being the complex affine space and $G\,=\,\text{GL}(n,\mathbb C) \ltimes {\mathbb C}^n$ its 
symmetry affine group).

A holomorphic Cartan geometry whose model is the complex projective space (meaning $G\,=\,\text{PGL}(n+1,\mathbb C)$ 
and $H$ is its parabolic subgroup stabilizing a line in $\CC{\rm P}^n$) defines a holomorphic projective connection on 
$M$. When the connection flat, this Cartan geometry defines a holomorphic projective structure on $M$ (i.e, a 
holomorphic $(G,\,X)$-structure, with $X\,=\,G/H\,=\,\CC{\rm P}^n$ being the complex projective space while 
$G\,=\,\text{PGL}(n+1,\mathbb C)$ is its automorphism group).

\subsection{Branched Cartan geometry}

In \cite{M1, M2} Mandelbaum defined and studied complex {\it branched affine and projective structures} on Riemann 
surfaces. A holomorphic branched affine (respectively, projective) structure on a Riemann surface is given by some 
open cover (as in Definition \ref{def G,X}) and local charts $\phi_i\,:\, U_i\,\longrightarrow\, \CC$ which are finite branched 
coverings such that the transition maps $\phi_i \circ \phi_j^{-1}\,:\, \phi_j(U_i \cap U_j)\,\longrightarrow\, \phi_i(U_i \cap U_j)$ 
are given by restrictions of an element $g_{ij}$ lying in the complex affine group of $\CC$ (respectively, in 
$\text{PSL}(2, {\mathbb C})$).

Generalizing Mandelbaum's definition, we introduced in \cite{BD} the notion of {\it branched holomorphic Cartan 
geometry} on a complex manifold $M$. This notion is well-defined for manifolds $M$ of any complex dimension and not 
only in the flat case.

The precise definition is the following \cite{BD}:

\begin{definition}\label{def Cartan branched}
A branched holomorphic Cartan geometry with model $(G,H)$ on a complex manifold $M$ is a holomorphic principal 
$H$--bundle $\pi \,: E_H\, \longrightarrow\, M$ equipped with a $\mathfrak g$--valued holomorphic $1$--form 
$\omega\,\in\, \text{H}^0(E_H,\, \mathfrak g)$ satisfying:
\begin{enumerate}
\item $\omega \, :\, TE_H\, \longrightarrow\, E_H\times{\mathfrak g}$ defines a vector bundle
morphism which is an isomorphism on an open dense set in $E_H$;

\item $\omega$ is $H$--equivariant with $H$ acting on $\mathfrak g$ via conjugation.

\item the restriction of $\omega$ to each fiber of $\pi$ coincides with the Maurer--Cartan form
associated to the action of $H$ on $E_H$.
\end{enumerate}
\end{definition}

Condition (1) implies that the complex dimension of the model $G/H$ is the same as the complex dimension of $M$.

Condition (2) implies that the open dense set $U$ in $E_H$ over which $\omega$ is an isomorphism is $H$-invariant. 
More precisely, there exists a divisor $D\,\subset\,M$ such that $U\,=\,\pi^{-1} (M \setminus D)$.

The divisor $D$ is called {\it the branching divisor} of the branched Cartan geometry $(E_H, \omega)$.

The branched Cartan geometry $(E_H,\, \omega)$ is called {\it flat} if its curvature tensor, defined again as 
$K(\omega)\,=\,d \omega + \frac{1}{2} \lbrack \omega,\, \omega \rbrack_{\mathfrak g}$, vanishes identically. In this 
case it was proved in \cite{BD} that there exists a holomorphic developing map $\text{dev}\,:\, 
\widetilde{M}\,\longrightarrow\, X$ and a group homomorphism form the fundamental group of $M$ to $G$ (the 
monodromy morphism of the branched flat Cartan geometry) . The developing map is uniquely defined up to a 
post-composition by an element in $G$ (acting on X) and the monodromy morphism is well-defined up to inner 
conjugacy in $G$. Moreover, the developing map is equivariant with respect to the action of the fundamental group 
of $M$ on $\widetilde{M}$. In the branched case the developing map is a holomorphic dominant map; its differential 
is invertible exactly over the open dense set $M \setminus D$ (away from the branching divisor $D$).

In particular, $M$ is endowed with a branched holomorphic $(G,\,X)$-structure in the sense of the following 
definition:

\begin{definition}\label{def branched G,X}
A branched holomorphic $(G,X)$-structure on a complex manifold $M$ is 
given by an open cover $(U_i)_{i \in I}$ of $M$ with branched holomorphic maps $\phi_i \,:\, U_i\,\longrightarrow\, X$ such that the 
transition maps $\phi_i \circ \phi_j^{-1} \,:\, \phi_j(U_i \cap U_j)\,\longrightarrow\,\phi_i(U_i \cap U_j)$ are given (on each 
connected component) by the restriction of an element $g_{ij}\,\in\, G$.
\end{definition}

Notice that the element $g_{ij}$ in the above definition is unique (since any element of $G$ is uniquely determined 
by the restriction of its action on a nontrivial open set in $X$).

In the most basic example, where $G$ is the complex Lie group $\CC$ and $H= \{0 \}$, the associated Cartan geometry 
is simply a nontrivial holomorphic $1$-form $\omega$ on a Riemann surface. The branching divisor $D$ is the divisor 
of zeros of $\omega$. The developing map is a primitive of $\omega$ and the image of the monodromy morphism is the 
group of periods of $\omega$.

A branched holomorphic Cartan geometry with model the complex projective space (recall that 
$G\,=\,\text{PGL}(n+1,\mathbb C)$ and $H$ is its parabolic subgroup stabilizing a line in $\text{P}^n(\CC)$) defines a 
branched holomorphic projective connection on $M$. When flat this Cartan geometry defines a branched holomorphic 
projective structure on $M$ (i.e., a branched holomorphic $(G,X)$-structure, with $X=G/H=\text{P}^n(\CC)$ being the 
complex projective space and $G\,=\,\text{PGL}(n+1,\mathbb C)$ its symmetry projective group).

This notions of branched Cartan geometry is much more flexible than the standard one. It is stable by pull-back 
through holomorphic ramified maps This was used in \cite{BD} to prove that all compact complex projective manifolds 
admit a branched flat holomorphic projective structure.

A more general version of Cartan geometry was defined in \cite{AM,BD4} where this concept is referred as a {\it 
generalized Cartan geometry}. A holomorphic generalized Cartan geometry $(E_H,\,\omega)$ with model $(G,H)$ on a 
complex manifold $M$ satisfies conditions (2) and (3) in Definitions \ref{def Cartan} and \ref{def Cartan 
branched}, but conditions (1) are dropped. In particular, there is no relation between the complex dimension of $M$ 
and the complex dimension of the model space $G/H$. In the flat case (defined by the vanishing of the curvature 
tensor $K(\omega)\,=\,d \omega + \frac{1}{2} \lbrack \omega, \omega \rbrack_{\mathfrak g}$), the developing map (of a 
holomorphic generalized Cartan geometry) is still a holomorphic map $\widetilde{M} \,\longrightarrow\, X\,=\,G/H$ 
which is equivariant with respect to the monodromy morphism; but here there is no condition on the rank of the 
developing map (see \cite{BD4}).

Consider again the basic example, where $G$ is the complex Lie group $\CC$ and $H\,=\, \{0 \}$. The associated Cartan 
geometry on a complex manifold $M$ is a holomorphic $1$-form $\omega\,\in\, \text{H}^0(\Omega^1(M,\, \CC))$. The 
branching divisor $D$ is the divisor of zeros of $\omega$. The Cartan geometry is flat if and only if $d 
\omega\,=\,0$. In particular, all those geometries are flat on compact K\"ahler manifolds and on compact complex 
surfaces. In this flat case, the developing map is a primitive of $\omega$ and the image of the monodromy morphism 
is the group of periods of $\omega$.

\section{Transverse Cartan geometry}\label{section foliated diff bundle}

The definition of a transverse Cartan geometry was first given by Blumenthal in \cite{Bl} in the context of foliated 
differential bundles (see, for instance, \cite{Mol}).
In \cite{BD3} we worked out a definition of a transverse branched holomorphic Cartan geometry using the formalism of 
Atiyah's bundle.
We present here this notion of transverse branched Cartan geometry in the complex analytic category.

Let $M$ be a complex manifold and $\mathcal F$ a holomorphic foliation on $M$ of complex codimension $n$.
For simplicity we assume first that $\mathcal F$ does not admit singularities. We will explain later how holomorphic 
foliations with singularities can be endowed with transverse Cartan geometry.

Recall that the associated tangent space $T\mathcal F \,\subset\, TM$ is a holomorphic distribution of codimension $n$ 
which is stable by Lie bracket. Conversely, any holomorphic distribution stable by Lie bracket is {\it integrable}, 
meaning it coincides with the tangent space of a foliation $\mathcal F$.
 
The normal bundle of the foliation, defined as the quotient $\mathcal N_{\mathcal F}
\,=\,TM/T \mathcal F$, admits a canonical holomorphic flat connection $\nabla^{\mathcal F}$ along the leafs
of the foliation $\mathcal F$. To define analytically this connection
one chose two local sections $X$ and $N$ of $T\mathcal F$ and $\mathcal N_{\mathcal F}$ respectively and
defines the derivative of $N$ in the direction of $X$ as being
$$\nabla^{\mathcal F}_X N\,=\, q(\lbrack X,\, \widetilde{N} \rbrack),$$ 
where $q \,:\, TM \,\longrightarrow\, \mathcal N_{\mathcal F}\,=\,TM/T \mathcal F$ is the quotient map
and $\widetilde{N}$ is any local section of $TM$ such that $q(\widetilde{N})\,=\,N$. 
 
The above connection is well-defined along $\mathcal F$. Indeed, if $\widehat{N}$ is another choice of lift for $N$ 
to $TM$, then $ q(\lbrack X, \,\widehat{N} \rbrack)\,=\, q(\lbrack X, \,\widetilde{N} \rbrack)$, since 
$\widehat{N}-\widetilde{N} \in T \mathcal F$ the Lie bracket stability of $T\mathcal F$ implies that $\lbrack 
\widehat{N}-\widetilde{N}, X \rbrack \in T\mathcal F$. Consequently, $q(\lbrack \widehat{N}-\widetilde{N}, X 
\rbrack)=0$ which implies $ q(\lbrack X,\, \widehat{N} \rbrack)\,=\,q(\lbrack X,\, \widetilde{N} \rbrack)$.
 
Notice that the local section $N\,=\,q(\widetilde N)$ of $\mathcal N_{\mathcal F}$ is parallel with respect to 
$\nabla^{\mathcal F}$ if and only if $\lbrack \widetilde{N} ,\, X \rbrack\,\in\, \mathcal F$, for all $X\,
\in\, \mathcal F$.
 
Moreover, Jacobi's identity for the Lie bracket implies the vanishing of the curvature tensor of $\nabla^{\mathcal 
F}$ and hence its flatness.
 
A transverse holomorphic Cartan geometry on $(M,\, \mathcal F)$ with model $(G,\,H)$ (with $G$ a complex connected Lie 
group and $H\,\subset\, G$ a connected complex Lie subgroup) is defined by the following data (summarized below in the 
conditions I and II).

I. A holomorphic principal $H$--bundle $\pi \,:\, E_H\,\longrightarrow\, M$ over $M$ which admits a flat partial 
holomorphic connection in the direction of $\mathcal F$: this means there exists a $H$-invariant holomorphic 
foliation $\widetilde{\mathcal F}$ of $E_H$ such that $d\pi (T \widetilde{\mathcal F})\, =\, T \mathcal F$ and the 
restriction of $d \pi$ to $\widetilde{ \mathcal F}$ is a submersion over $\mathcal F$. Hence $\widetilde{\mathcal 
F}$ is a $H$-invariant lift of $\mathcal F$ to $E_H$.

II. A holomorphic $\mathfrak g$--valued one--form $\omega\,\in\, \text{H}^0(E_H, \mathfrak g)$
 satisfying the following conditions:
\begin{enumerate}
\item $\omega$ is $H$--equivariant for the adjoint action of $H$ on $\mathfrak g$;

\item the restriction of $\omega$ to any fiber of $\pi$ coincides with the
Maurer--Cartan form $\omega_H$;

\item $\omega$ vanishes on the foliation $T{\widetilde{\mathcal F}}\, \subset\, TE_H$;

\item the induced  morphism $\omega\, :\, (TE_H)/{T\widetilde{\mathcal F}}
\, \longrightarrow\, E_H\times
{\mathfrak g}$ is constant on parallel sections of $(TE_H)/{T\widetilde{\mathcal F}}$ (with respect to the canonical connection $\nabla^{\widetilde F}$ along  the foliation $\widetilde{\mathcal F}$).

\item  the  homomorphism $\omega\, :\, (TE_H)/{T\widetilde{\mathcal  F}}
\, \longrightarrow\, E_H\times
{\mathfrak g}$ is an isomorphism.
\end{enumerate}

Condition (4) can be formulated, as in \cite{Bl}, as a vanishing of a Lie derivative. More precisely, it is 
equivalent with $L_X \omega\,=\,0$, for any $X$ tangent to $\widetilde{\mathcal F}$ (this comes from our previous 
description of flat sections of the tangent bundle of the foliation). Notice that by Lie-Cartan formula, namely 
$L_X \omega\,=\,i_X d \omega+d (i_X \omega)$, we get $L_X \omega\,=\,i_X d \omega\,=\,0$.

A {\it branched} transverse holomorphic Cartan geometry is defined by the same data, except that condition (5) is 
satisfied {\it over a nonempty open subset of $E_H$} (see \cite{BD3}). Condition (1) implies that this nonempty 
open subset of $E_H$ is the pull-back through $\pi$ of a nonempty open subset in $M$ (the complementary of a 
divisor in $M$ \cite{BD3}).
 
Condition (5) implies in both cases (classical and branched) that the complex dimension of the model $G/H$ is the 
same as the complex codimension of $\mathcal F$.

The transverse curvature $K(\omega)\,=\,d \omega + \frac{1}{2} \lbrack \omega,\, \omega \rbrack_{\mathfrak g}$ is a 
$\mathfrak g$-valued two-form on $E_H$. Conditions (1), (2), (3) and (4) imply that the curvature $K(\omega)$ 
vanishes on any pair $(Z_1, \,Z_2)\, \in\, TE_H$ such that one of them is vertical or tangent to 
$\widetilde{\mathcal F}$. Hence the transverse curvature is the pull-back to $E_H$ of a holomorphic section of 
$\Lambda^2 (\mathcal N_{\mathcal F})^* \otimes \text{ad}(E_G)$ (recall that $\text{ad}(E_G)$ is the vector bundle 
associated to $E_H$ through the action of $H \subset G$ on $\mathfrak g$ by adjoint representation).
 
When the transverse curvature vanishes identically, we have a (branched) transverse holomorphic $(G,X)$-structure. 
In this the pull-back of the (branched) flat transverse Cartan geometry on the universal cover $\widetilde M$ of 
$M$ is given by a holomorphic developing map $\text{dev} \,:\, \widetilde{M} \,\longrightarrow\, X\,=\,G/H$ which 
is constant on the leafs of $\mathcal F$. Moreover, the developing is a submersion in the classical case and a 
submersion away from the branching divisor in the branched case. The developing map is equivariant with respect to 
the monodromy homomorphism from the fundamental group of $M$ into $G$ (see \cite{BD3,Mol}).
 
In the simplest case where $G\,=\,\CC$ and $H\,=\, \{ 0 \}$, the bundle $E_H$ coincides with $M$ and a branched 
transverse structure for this model $(G,H)$ for a complex codimension one holomorphic foliation is given by a 
holomorphic one-form $\omega$ on $M$ whose kernel coincides with $\mathcal F$.  In particular, $\omega$ satisfies 
the integrability condition $\omega \wedge d \omega \,=\,0$. This is equivalent with the condition that $d \omega$ 
vanishes in restriction to $\mathcal F$.

Moreover, $\omega$ satisfies condition (4) above which is $L_X \omega\,=\, i_Xd \omega\, =\,0$, for any local holomorphic 
tangent vector field to $\mathcal F$.  Since $F$ is of complex codimension one, this implies $d \omega\,=\,0$.

Hence codimension one foliations admit branched holomorphic transverse structure exactly when they are defined as 
the kernel of a global holomorphic closed one-form $\omega$. The branching divisor is the divisor of zeros of 
$\omega$. The developing map of the transverse translation structure is a primitive of $\omega$. The monodromy 
group of the transverse translation structure is the additive subgroup of $\CC$ generated by the periods of 
$\omega$. In general this group is not a lattice.

A branched transverse holomorphic Riemannian metric is a branched transverse Cartan geometry with model $(G\,=\, 
\text{O}(n, \mathbb{C}) \ltimes {\mathbb C}^n,\, H\,=\,\text{O}(n, \mathbb{C})).$ In the flat case we get a branched 
transverse $(G,X)$-structure, with $G\,=\, \text{O}(n, \mathbb{C}) \ltimes {\mathbb C}^n$ being the group of 
complex Euclidean motions and $X$ the complex Euclidean space.
 
When the model is that of the complex affine space (respectively, that of the complex projective space) we obtain 
the notion of branched transverse holomorphic affine connection (respectively, that of branched transverse 
holomorphic projective connection). In the flat case, we get a branched transverse affine structure (respectively, 
that of a branched transverse holomorphic projective structure).
 
In the classical (unbranched) case, transversely affine (respectively transversely projective) foliations of 
complex dimension one where studied by several authors (see, for instance \cite{Sc} and references therein).
 
One could define a more general notion of a {\it generalized transverse holomorphic Cartan geometry} with model 
$(G,H)$ by dropping condition (5) (see Section \ref{section foliated Atyiah}). In this case there is no relation 
anymore between the complex codimension of $\mathcal F$ and the complex codimension of the model $G/H$. In the flat 
case the developing map of such a generalized transverse holomorphic Cartan geometry is a holomorphic map from the 
universal cover $\widetilde M$ to the model space $X\,=\,G/H$ which is constant on the leafs of $\mathcal F$. In 
general, this map is not a submersion at the generic point.
 
Notice that for the trivial foliation $\mathcal F$ (given by points in $M$) the definition of a transverse Cartan 
geometry is the same as the definition of a Cartan geometry over $M$. The same holds true in the branched case 
(respectively, in the generalized case).
 
\subsection{Foliated Atiyah bundle description}\label{section foliated Atyiah}

Consider again $\pi\, :\, TE_H\, \longrightarrow \,M$ a holomorphic (right) principal $H$--bundle over $M$. The 
kernel of the differential $d \pi$ defines a holomorphic (vertical) subbundle in the holomorphic tangent bundle 
$TE_H$ which is holomorphically isomorphic to $E_H\times {\mathfrak h}$ (this isomorphism is realized by the 
identification of the fundamental vector fields of the $H$-action with the Lie algebra of the left-invariant vector 
fields on $H$; or equivalently, using the Maurer-Cartan form $\omega_H$ of $H$). Notice that this isomorphism is 
not invariant for the lifted (right) $H$ action on $TE_H$, but equivariant with respect to the adjoint action of 
$H$ on its Lie algebra $\mathfrak h$.
 
 Following Atiyah \cite{At}, let us define the holomorphic quotient bundle over $M$ as $$ \text{ad}(E_H)\, :=\, 
\text{kernel}(\mathrm{d}\pi)/H\, \longrightarrow\, M\, . $$ As explained above, $\text{ad}(E_H)$ is holomorphically 
isomorphic with the twisted vector bundle $E_H\times^H\mathfrak h$ associated to the principal $H$--bundle $E_H$ via 
the adjoint action of $H$ on its Lie algebra $\mathfrak h$.  Recall that $\text{ad}(E_H)$ is known as the adjoint 
vector bundle of $E_H$. Notice that, since the adjoint action of $H$ on its Lie algebra preserves the Lie algebra 
structure of $\mathfrak h$, the fiber of $\text{ad}(E_H)$ has the Lie algebra structure of $\mathfrak h$: it is 
identified with $\mathfrak h$ up to a conjugation.

One can check that the quotient the quotient
$$
(TE_H)/H\, \longrightarrow\, M
$$

has also  the structure of a holomorphic vector bundle over $M$ (see \cite{At}), classically denoted by  $\text{At}(E_H)$ and  known as the {\it Atiyah bundle} of  $E_H$ \cite{At}.

The quotient by $H$ of  the short exact sequence of holomorphic vector
bundles over  $E_H$ 
$$
0\, \longrightarrow\, \text{kernel}(\mathrm{d}\pi)\, \longrightarrow\,
\mathrm{T}E_H \, \stackrel{\mathrm{d}\pi}{\longrightarrow}\,\pi^*TM
\, \longrightarrow\, 0\, .
$$

leads to the following short exact sequence of
\begin{equation}\label{at1}
0\, \longrightarrow\, \text{ad}(E_H)\, \stackrel{\iota''}{\longrightarrow}\,\text{At}(E_H)\,
\stackrel{\widehat{\mathrm{d}}\pi}{\longrightarrow}\, TM\, \longrightarrow\, 0\, ,
\end{equation}
where $\widehat{\mathrm{d}}\pi$ is constructed from $\mathrm{d}\pi$; this is known as
the Atiyah exact sequence for $E_H$. 

{\it A holomorphic connection in the principal $H$--bundle $E_H$} is a splitting of the Atiyah exact sequence \cite{At}.

Let us now consider the foliated case: the basis  $M$ of the principal $H$--bundle $E_H$  is a complex manifold endowed with a holomorphic foliation $\mathcal F$.

Define the foliated Atiyah bundle as the  subbundle
\begin{equation}\label{atF}
\text{At}_{\mathcal F}(E_H)\, :=\, (\widehat{\mathrm{d}}\pi)^{-1}(T{\mathcal F})\,\subset\,
\text{At}(E_H)\, .
\end{equation}
So from \eqref{at1} we get the short exact sequence
\begin{equation}\label{at2}
0\, \longrightarrow\, \text{ad}(E_H)\, \longrightarrow\,\text{At}_{\mathcal F}(E_H)\,
\stackrel{\mathrm{d}'\pi}{\longrightarrow}\,  T{\mathcal F}\, \longrightarrow\, 0\, ,
\end{equation}
where $\mathrm{d}'\pi  $ is the restriction of $\widehat{\mathrm{d}}\pi$
in \eqref{at1} to the subbundle $\text{At}_{\mathcal F}(E_H)$.

The principal $H$-bundle $E_H$ admits a partial holomorphic in the direction of $\mathcal F$ if the exact sequence 
(\ref{at2}) splits, meaning there exists a holomorphic homomorphism
$$
\lambda \, :\, T{\mathcal F}\, \longrightarrow\, \text{At}_{\mathcal F}(E_H)
$$
such that $\mathrm{d}'\pi\circ\lambda\,=\, \text{Id}_{T\mathcal F}$, where
$\mathrm{d}'\pi $ is the projection homomorphism in \eqref{at2}. 

Notice that the splitting of (\ref{at2}) given by the homomorphism $\lambda$ can be also  defined  using a projection   homomorphism
$p\, :\, \text{At}_{\mathcal F}(E_H)\, \longrightarrow\, \text{ad}(E_H)$ such that $p$ is the identity map in restriction  $\text{ad}(E_H)$ (the canonical  inclusion of $\text{ad}(E_H)$ in
$\text{At}_{\mathcal F}(E_H)$ is the injective homomorphism in \eqref{at2}).  The
homomorphism $p$ is  uniquely determined by  $\lambda$ (and conversely)   by the condition that
the image of $\theta$ in  $\text{At}_{\mathcal F}(E_H)$ is  the kernel of $p$.  In general  the image of $\lambda$ is not  a foliation in $TE_H$  and, as first   proved  by Ehresmann, 
the curvature of  the partial connection $\lambda \, :\, T{\mathcal F}\, \longrightarrow\, \text{At}_{\mathcal F}(E_H)$ is  the obstruction to the integrability of the image of $\lambda$.

To precise this statement, consider  two local  holomorphic sections $X_1$ and $X_2$ of $T\mathcal F$ and  compute 
the locally defined holomorphic  section $p ([\lambda(X_1),\, \lambda(X_2)])$ of $\text{ad}(E_H)$ (notice that the Lie bracket is well defined:
$\lambda(X_1)$ and $\lambda(X_2)$ representing   $H$--invariant sections  of  $TE_H$, their Lie bracket
 is also a  $H$--invariant section of $TE_H$).  One can easily check that this defines a 
${\mathcal O}_M$--linear homomorphism
$$
{\mathcal K}(\lambda) \, \in\, H^0(M,\, \text{Hom}(\bigwedge\nolimits^2{T\mathcal F},\, \text{ad}(E_H)))
\,=\, H^0(M, \, \text{ad}(E_H)\otimes \bigwedge\nolimits^2{T\mathcal F}^*)\, ,
$$
which is the \textit{curvature} of the connection $\lambda$. The connection $\lambda$ is
called {\it flat } if ${\mathcal K}(\lambda)$ vanishes identically.

A (partial) connection on the principal $H$-bundle  $E_H$ induces a canonical (partial) connection on any  bundle 
associated to $E_H$ via a representation of $H$. In particular, a (partial) connection on $E_H$ induces a 
(partial) connection on the adjoint bundle $\text{ad}(E_H)$.

Since ${\rm At}_{\mathcal F}(E_H)$ is a subbundle of ${\rm At}(E_H)$, any partial connection
$\lambda\, :\, {T\mathcal F}\, \longrightarrow\, \text{At}_{\mathcal F}(E_H)$ induces  a unique  associated  homomorphism
$\lambda'\,:\,{T\mathcal F}\, \longrightarrow\, \text{At}(E_H)$ and from \eqref{at1} we get the following  exact sequence
\begin{equation}\label{at3}
0\, \longrightarrow\, \text{ad}(E_H)\, \stackrel{\iota'}{\longrightarrow}\,
\text{At}(E_H)/\lambda'({T\mathcal F})\,
\stackrel{\widehat{\mathrm{d}}\pi}{\longrightarrow}\, TM/{T\mathcal F}\,
=\, {\mathcal N}_{\mathcal F}\, \longrightarrow\, 0\, ,
\end{equation}
where $\iota'$ is given by $\iota''$ in \eqref{at1}.

Let $\lambda$ be a flat partial connection on $E_H$. In this case the image of $\lambda$ in the foliated Atiyah 
bundle $\text{At}_{\mathcal F}(E_H)$ is a foliation. It uniquely defines a $H$-invariant foliation 
$\widetilde{\mathcal F}$ in $TE_H$ such that $d\pi (T \widetilde{\mathcal F})\, =\,T \mathcal F$ and the restriction of 
$d \pi$ to $\widetilde{ \mathcal F}$ is a submersion over $\mathcal F$. Consequently, the definition of a a flat 
partial connection on $E_H$ agrees with the one given in the context of foliated differential bundles in Section 
\ref{section foliated diff bundle}.

Recall that the normal bundle $TM/{T\mathcal F}$ is endowed with a canonical partial flat connection along 
$\mathcal F$, namely $\nabla^{\mathcal F}$, defined in Section \ref{section foliated diff bundle}.

\begin{lemma}[{\cite{BD3}}]\label{lemma1}
The  flat partial connection $\lambda$ on  $E_H$ induces  a unique  flat partial
connection on ${\rm At}(E_H)/\lambda'({T\mathcal F})$ (along $\mathcal F$)  such  that the
homomorphisms in the exact sequence \eqref{at3} are connection preserving (where $\text{ad}(E_H)$ is endowed with the canonical connection induced from $(E_H,\lambda)$ and $TM/{T\mathcal F}$ is endowed with $\nabla^{\mathcal F}$).
\end{lemma}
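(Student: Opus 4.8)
The plan is to build the connection geometrically from the lifted foliation $\widetilde{\mathcal F}$ on $E_H$, and then to read off flatness and the compatibility with \eqref{at3} from the standard properties of the canonical (Bott) partial connection along a foliation.

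Since $\lambda$ is flat, its image in $\text{At}(E_H)\,=\,TE_H/H$ is the $H$--quotient of an $H$--invariant holomorphic foliation $\widetilde{\mathcal F}\,\subset\, TE_H$ with $d\pi(T\widetilde{\mathcal F})\,=\,T\mathcal F$, the restriction $d\pi\vert_{T\widetilde{\mathcal F}}\,:\,T\widetilde{\mathcal F}\,\to\, T\mathcal F$ being a fiberwise isomorphism (the ranks agree because $\lambda$ is a section of $\mathrm{d}'\pi$); this is exactly the lift discussed before the lemma. Under this identification $\lambda'(T\mathcal F)$ corresponds to $T\widetilde{\mathcal F}/H$, so that
\[
\text{At}(E_H)/\lambda'(T\mathcal F)\;\cong\;(TE_H/T\widetilde{\mathcal F})/H\;=\;{\mathcal N}_{\widetilde{\mathcal F}}/H\, ,
\]
where ${\mathcal N}_{\widetilde{\mathcal F}}\,=\,TE_H/T\widetilde{\mathcal F}$ is the normal bundle of $\widetilde{\mathcal F}$. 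The bundle ${\mathcal N}_{\widetilde{\mathcal F}}$ carries its canonical flat partial connection $\nabla^{\widetilde{\mathcal F}}$ along $\widetilde{\mathcal F}$, defined by the same Lie-bracket formula as $\nabla^{\mathcal F}$ in Section \ref{section foliated diff bundle}; this connection is $H$--invariant, because $\widetilde{\mathcal F}$ is $H$--invariant and the defining formula is canonical. I would therefore take the connection asserted in the lemma to be the one induced on ${\mathcal N}_{\widetilde{\mathcal F}}/H$ by $\nabla^{\widetilde{\mathcal F}}$ after passing to the $H$--quotient; it is a partial connection along $\mathcal F$ precisely because $\pi$ maps $\widetilde{\mathcal F}$ onto $\mathcal F$. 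Its flatness is inherited from that of $\nabla^{\widetilde{\mathcal F}}$ --- whose curvature vanishes by Jacobi's identity, exactly as for $\nabla^{\mathcal F}$ --- since flatness is preserved by the $H$--quotient.

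Next I would check that the two homomorphisms in \eqref{at3} are connection preserving. For $\widehat{\mathrm{d}}\pi\,:\,\text{At}(E_H)/\lambda'(T\mathcal F)\,\to\,{\mathcal N}_{\mathcal F}$ this is the functoriality of the canonical partial connection under the foliated submersion $\pi\,:\,(E_H,\widetilde{\mathcal F})\,\to\,(M,\mathcal F)$: an $H$--invariant local field tangent to $\widetilde{\mathcal F}$ is $\pi$--related to a local field tangent to $\mathcal F$, and comparing the defining bracket formulas shows that $\mathrm{d}\pi$ intertwines $\nabla^{\widetilde{\mathcal F}}$ with $\nabla^{\mathcal F}$. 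For the inclusion $\iota'\,:\,\text{ad}(E_H)\,\hookrightarrow\,\text{At}(E_H)/\lambda'(T\mathcal F)$ one first notes that $\text{kernel}(\mathrm{d}\pi)\cap T\widetilde{\mathcal F}$ is the zero section, so $\text{kernel}(\mathrm{d}\pi)$ embeds into ${\mathcal N}_{\widetilde{\mathcal F}}$; moreover it is a $\nabla^{\widetilde{\mathcal F}}$--parallel subbundle, because for $X$ tangent to $\widetilde{\mathcal F}$ (chosen $H$--invariant, hence $\pi$--related to a field downstairs) and $\widetilde N$ vertical one has $\mathrm{d}\pi([X,\widetilde N])\,=\,[\mathrm{d}\pi(X),\,0]\,=\,0$, so $[X,\widetilde N]$ is again vertical. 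It then remains to match the restriction of $\nabla^{\widetilde{\mathcal F}}$ to $\text{kernel}(\mathrm{d}\pi)$, pushed down by $H$, with the canonical partial connection that $\lambda$ induces on $\text{ad}(E_H)\,=\,E_H\times^H{\mathfrak h}$. Here two a priori different descriptions of the same object must be reconciled: on one side the covariant derivative along $\bar X$ is the derivative of the corresponding equivariant $\mathfrak h$--valued function along the horizontal lift $\lambda(\bar X)$; on the other it is the Lie bracket of $\lambda(\bar X)$ with the associated fundamental (vertical) vector field. Their agreement is the standard fact that, for a principal connection, the bracket of a horizontal field with a fundamental vector field is the fundamental vector field attached to the covariant derivative. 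I expect this step --- unwinding the Maurer--Cartan trivialization of $\text{kernel}(\mathrm{d}\pi)$ and checking the bracket identity together with equivariance --- to be the main technical point; the rest is bookkeeping with \eqref{at1} and the identification displayed above.

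Finally, for uniqueness: the connection constructed above is the one canonically attached to $\lambda$ by the recipe "canonical connection of $\widetilde{\mathcal F}$ on ${\mathcal N}_{\widetilde{\mathcal F}}$, pushed down by $H$", equivalently the connection on the quotient of $\text{At}(E_H)$ by the parallel subbundle $\lambda'(T\mathcal F)$; it is thus uniquely determined by $\lambda$, and one then verifies, as in the previous paragraph, that it is connection preserving for the maps in \eqref{at3}, which is the characterization stated in the lemma.
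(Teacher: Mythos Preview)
Your proposal is correct and follows essentially the same route as the paper: lift $\lambda$ to the $H$--invariant foliation $\widetilde{\mathcal F}$ on $E_H$, take the canonical (Bott) partial connection $\nabla^{\widetilde{\mathcal F}}$ on the normal bundle $TE_H/T\widetilde{\mathcal F}$, descend by $H$ to obtain the flat partial connection on $\text{At}(E_H)/\lambda'(T\mathcal F)$, and verify compatibility with $\iota'$ by computing that $[X',N]$ stays vertical when $X'$ is tangent to $\widetilde{\mathcal F}$ and $N$ is vertical. The paper's proof is terser than yours---it does not separately spell out the match between the restricted Bott connection on $\text{kernel}(\mathrm{d}\pi)/H$ and the canonical partial connection on $\text{ad}(E_H)$ coming from $\lambda$, nor does it discuss uniqueness---so the extra care you flag as ``the main technical point'' is a welcome clarification rather than a deviation.
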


\begin{proof}
We have seen that the image of $\lambda$ defines an $H$--invariant holomorphic foliation $\widetilde{\mathcal F}$ 
on $E_H$; such that the differential $d\pi (T \widetilde{\mathcal F})\,=\,T \mathcal F$ and the restriction of $d \pi$ 
to $\widetilde{\mathcal F}$ is a submersion over $\mathcal F$.

Consider the canonical partial connection $\nabla^{\widetilde{\mathcal F}}$ on 
the normal bundle $TE_H/T\widetilde{\mathcal F}$. Notice that $\nabla^{\widetilde{\mathcal F}}$ is $H$-invariant.
Since $\text{At}(E_H)\,=\, (TE_H)/H$ we have $(TE_H/T \widetilde{\mathcal F})/H\,=\, {\rm 
At}(E_H)/\lambda'({\mathcal F})$. By $H$-invariance, the natural connection $\nabla^{\widetilde{\mathcal F}}$ on 
$TE_H/{T\widetilde F}$ in the direction of $\widetilde{\mathcal F}$ descends to a flat 
partial connection on ${\rm At}(E_H)/\lambda(T{\mathcal F})$ in the direction on 
$\mathcal F$.

Let us show that the morphism $\iota'$ in (\ref{at3}) is connection preserving. Consider 
$X$ a  local holomorphic vector field on $M$ tangent to $\mathcal F$, defined on an open set
$U\, \subset\, M$. Let $X'$ be the unique holomorphic vector field in $\pi^{-1}(U)\, \subset\, E_H$, tangent to
$\widetilde{\mathcal F}$, which lifts $X$, meaning that
$\mathrm{d}\pi(X')\,=\, X$. Let $N$ be a 
holomorphic section of $\text{kernel}(\mathrm{d}\pi)\, \subset\, TE_H$ over
$\pi^{-1}(U)$. Then the Lie bracket $[X', \,N]$ is such that 
$\mathrm{d}\pi([X', \,N])\,=\, 0$, meaning $[X', \,N]$ is a vertical vector field: a section of
$\text{kernel}(\mathrm{d}\pi)$. But $\text{ad}(E_H)\,=\, \text{kernel}(\mathrm{d}\pi)
/H$, and, consequently, the inclusion $\iota'$ of $\text{ad}(E_H)$ in 
${\rm At}(E_H)/\lambda'(T{\mathcal F})$ in \eqref{at3} intertwines the partial connections
on $\text{ad}(E_H)$ and ${\rm At}(E_H)/\lambda'(T{\mathcal F})$ 
in the direction of $\mathcal F$. It also follows that the projection homomorphism 
$\widehat{\mathrm{d}}\pi$ in \eqref{at3} is partial connection preserving as well.
\end{proof}

Now we give the foliated Atiyah bundle theoretical definition of a transverse branched Cartan geometry as 
introduced in \cite{BD3}.

Let $G$ be a connected complex Lie group and $H\, \subset\, G$ a complex Lie subgroup with Lie algebras $\mathfrak 
g$ and $\mathfrak h$ respectively.

Again $\pi \,:\, E_H \,\longrightarrow\, M$ is a holomorphic principal $H$--bundle over $M$.
Define 
\begin{equation}\label{eg}
E_G\,=\, E_H\times^H G\,\longrightarrow\, M
\end{equation}
be the principal $G$--bundle over  $M$ obtained by extending the structure group of $E_H$ using the 
inclusion of $H$ in $G$.  Denote by $\text{ad}(E_G)\,=\, E_G\times^G{\mathfrak g}$  the adjoint bundle for $E_G$.

The Lie algebra  inclusion of $\mathfrak h$ in $\mathfrak g$ induces an
injective homomorphism of holomorphic vector bundles 
\begin{equation}\label{i1}
\iota\, :\, \text{ad}(E_H)\,\longrightarrow\,\text{ad}(E_G)\, .
\end{equation}

Consider $\lambda$ a flat partial connection on $E_H$ in the direction of $\mathcal F$. We have seen that $\lambda$ 
induces flat partial connections on all the associated bundles, in particular, on $E_G$, $\text{ad}(E_H)$ and 
$\text{ad}(E_G)$.

A transverse  {\it branched} holomorphic Cartan geometry with model  $(G,\, H)$
on the foliated manifold $(M,\, {\mathcal F})$ is given by the following data 

I)  A holomorphic principal $H$--bundle $E_H$ on $M$ equipped with a flat partial
connection $\lambda$, and

II)  A holomorphic homomorphism

\begin{equation}\label{beta}
\beta\,:\, \text{At}(E_H)/\lambda'(T{\mathcal F})\, \longrightarrow\,
\text{ad}(E_G)\, ,
\end{equation}

such that the following three conditions hold:
\begin{enumerate}
\item $\beta$ is partial connection preserving,

\item $\beta$ is an isomorphism {\it over a nonempty open subset of $M$}, and

\item the following diagram is commutative:
\begin{equation}\label{cg1}
\begin{matrix}
0 &\longrightarrow & \text{ad}(E_H) &\stackrel{\iota'}{\longrightarrow} &
\text{At}(E_H)/\theta'(T{\mathcal F}) &
\longrightarrow & {\mathcal N}_{\mathcal F} &\longrightarrow & 0\\
&& \Vert &&~ \Big\downarrow\beta && ~ \Big\downarrow\overline{\beta}\\
0 &\longrightarrow & \text{ad}(E_H) &\stackrel{\iota}{\longrightarrow} &
\text{ad}(E_G) &\longrightarrow &
\text{ad}(E_G)/\text{ad}(E_H) &\longrightarrow & 0
\end{matrix}
\end{equation}
\end{enumerate}
where the top exact sequence is the one in \eqref{at3}, and $\iota$ is the homomorphism
in \eqref{i1}.

From the commutativity of \eqref{cg1} it follows immediately that the homomorphism $\overline{\beta} \,:\, 
{\mathcal N}_{\mathcal F}\,\longrightarrow\, \text{ad}(E_G)/\text{ad}(E_H)$ in \eqref{cg1} is an isomorphism over a 
point $m\, \in\, M$ if and only if $\beta(m)$ is an isomorphism.  Notice that the classical case of an unbranched 
transverse Cartan geometry is that where the open subset in condition II) (2) is the entire manifold $M$. This 
corresponds to condition II) (5) in the equivalent definition given in Section \ref{section foliated diff bundle}. 
Notice also that the condition II) (1) here is equivalent with the condition II (5) in Section \ref{section 
foliated diff bundle}.

Let $n$ be the complex dimension of $\mathfrak g$. Consider the homomorphism of $n$-th
exterior products
$$
\bigwedge\nolimits^n\beta\, :\, \bigwedge\nolimits^n(\text{At}(E_H)/\lambda'({T\mathcal F}))
\, \longrightarrow\, \bigwedge\nolimits^n\text{ad}(E_G)
$$
induced by $\beta$. The homomorphism $\beta$ fails to be an isomorphism precisely over the
divisor of the section $\bigwedge\nolimits^n\beta$ of the line bundle
$\text{Hom}(\bigwedge\nolimits^n(\text{At}(E_H)/\lambda'(T{\mathcal F})),
\,\bigwedge\nolimits^n\text{ad}(E_G))$. 

The {\it branching} set $D \subset M$ defined earlier  in Section \ref{section foliated diff bundle} coincides  with the vanishing set of the   holomorphic  section $\bigwedge\nolimits^n\beta$.
This divisor $\text{div}(\bigwedge\nolimits^n\beta)$ is  called the \textit{branching divisor} for $((E_H,\, \lambda),\, \beta)$.

The triple $((E_H,\, \lambda),\, \beta)$ characterizes a classical (unbranched)  holomorphic
Cartan geometry if  and only if $\beta$ is an isomorphism over  the entire $M$. In this case the branching divisor is trivial.

In order to define the more general notion of a {\it transverse generalized Cartan geometry} (which is a foliated 
version of a generalized Cartan geometry, as defined in \cite{AM,BD4}) one drops condition II) (2) in the above 
definition of a transverse branched Cartan geometry.

In order to define the developing map of a transverse (branched or generalized) holomorphic Cartan geometry the 
following results is useful.

\begin{lemma} \label{lem connection E_G}
A transverse generalized holomorphic Cartan geometry $((E_H,\, \lambda),\, \beta)$ over the foliated manifold $(M, 
\mathcal F)$ defines a holomorphic connection on the principal $G$--bundle $E_G$ which is flat in the direction of 
$\mathcal F$.
\end{lemma}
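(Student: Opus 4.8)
The plan is to produce, from the data $((E_H,\,\lambda),\,\beta)$, a $\mathfrak g$-valued holomorphic $1$-form $\varpi$ on the total space of $E_G$ satisfying the Cartan connection axioms in the direction of $\mathcal F$, and then to verify that the resulting curvature $d\varpi + \frac12[\varpi,\varpi]_{\mathfrak g}$ vanishes when restricted to (lifts of) $T\mathcal F$. First I would use the flat partial connection $\lambda$ on $E_H$ and the inclusion $H\subset G$ to obtain a flat partial connection on $E_G$ along $\mathcal F$; this is automatic since $\lambda$ induces partial connections on all associated bundles (in particular on $E_G$), and it gives an $H$-invariant — in fact $G$-invariant after extension — foliation $\widetilde{\mathcal F}_G$ on $E_G$ lifting $\mathcal F$. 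The role of $\beta$ is to supply the transverse part of the connection form: recall that $\text{At}(E_G) = (TE_G)/G$ sits in its own Atiyah sequence with kernel $\text{ad}(E_G)$, and a holomorphic connection on $E_G$ is exactly a splitting $TM \to \text{At}(E_G)$, equivalently a bundle projection $\text{At}(E_G)\to\text{ad}(E_G)$ restricting to the identity on $\text{ad}(E_G)$.

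The key construction is therefore: define the projection $\text{At}(E_G)\to\text{ad}(E_G)$ by declaring it to kill the image of the flat partial lift $\lambda'_G(T\mathcal F)$ inside $\text{At}(E_G)$, and on a complement transverse to $T\mathcal F$ to be given by $\beta$ composed with the projection $\text{At}(E_H)/\lambda'(T\mathcal F)\to$ (its $E_G$-analogue). Concretely, the commuting diagram \eqref{cg1} says precisely that $\beta$ is a morphism of the $\mathcal N_{\mathcal F}$-extension $\text{At}(E_H)/\lambda'(T\mathcal F)$ into the $\text{ad}(E_G)/\text{ad}(E_H)$-extension $\text{ad}(E_G)$ restricting to the identity on $\text{ad}(E_H)$; extending structure groups turns this into a splitting datum for $\text{At}(E_G)$ over the open set where $\beta$ is an isomorphism, and — crucially — the homomorphism $\beta$ itself (not just its restriction to the locus where it is invertible) still defines a bundle map $\text{At}(E_G)\to\text{ad}(E_G)$ that is the identity on $\text{ad}(E_G)$, because the failure of $\beta$ to be an isomorphism is concentrated on $\mathcal N_{\mathcal F}$ and does not affect the $\text{ad}$ part. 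This is the one place where one must be a little careful in the generalized (non-branched-condition) setting: a priori $\beta$ being merely a homomorphism might obstruct getting a genuine splitting, but since the top row of \eqref{cg1} has $\text{ad}(E_H)$ mapped \emph{isomorphically} (the double bar in the diagram), one always gets a well-defined connection $1$-form on $E_G$; only its behaviour transverse to $\mathcal F$ degenerates, which is irrelevant for the stated conclusion.

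Having built the connection $\varpi$ on $E_G$, it remains to check flatness along $\mathcal F$, i.e. that the curvature $2$-form of $\varpi$ vanishes on $\bigwedge^2 T\widetilde{\mathcal F}_G$. Here I would invoke two inputs already in the excerpt: first, Lemma \ref{lemma1}, which says the flat partial connection on $E_H$ induces connection-preserving maps in \eqref{at3}, so that $\beta$ being partial-connection preserving (condition II)(1)) transports the flatness of $\nabla^{\widetilde{\mathcal F}}$ on $\text{At}(E_H)/\lambda'(T\mathcal F)$ over to the image side; second, the flatness $\mathcal K(\lambda)=0$ of $\lambda$ itself, which guarantees that the lift $\widetilde{\mathcal F}_G$ is genuinely integrable. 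Combining: a pair of vector fields tangent to $\widetilde{\mathcal F}_G$ projects to sections of $T\mathcal F$, their bracket lies again in $\widetilde{\mathcal F}_G$ by integrability, and condition II)(1) forces the $\mathfrak g$-component computed via $\varpi$ to agree with the bracket of components — exactly the vanishing of the curvature on such pairs. The main obstacle I anticipate is bookkeeping: carefully identifying $\text{At}(E_G)$ and its quotient by $\lambda'_G(T\mathcal F)$ with the objects appearing in \eqref{cg1} after the structure-group extension $H\hookrightarrow G$, and checking that the partial-connection-preserving condition on $\beta$ really is the right infinitesimal statement to conclude $d\varpi+\frac12[\varpi,\varpi]_{\mathfrak g}$ annihilates $T\widetilde{\mathcal F}_G$ — essentially reproving, in the Atiyah-bundle language, that condition (4)/(5) in the differential-bundle definition of Section \ref{section foliated diff bundle} yields $L_X\varpi = i_X\,d\varpi = 0$ for $X$ tangent to the lifted foliation.
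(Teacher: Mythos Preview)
Your outline is on the right track and matches the paper's strategy: build a splitting $\varphi:\text{At}(E_G)\to\text{ad}(E_G)$ from $\beta$, then argue that along $\mathcal F$ this splitting agrees with the partial connection induced by $\lambda$ and is therefore flat. However, what you label as ``bookkeeping'' at the end is in fact the substantive step, and your description of it is not quite right. You propose to build $\varphi$ by first killing $\lambda'_G(T\mathcal F)$ and then using $\beta$ ``on a complement transverse to $T\mathcal F$'', but $\beta$ is a map out of $\text{At}(E_H)/\lambda'(T\mathcal F)$, not out of anything built from $\text{At}(E_G)$, and there is no natural map from $\text{At}(E_G)/\lambda'_G(T\mathcal F)$ to $\text{At}(E_H)/\lambda'(T\mathcal F)$ (the ranks do not even match in the generalized setting, since condition~(5) is dropped).

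The paper resolves this with a concrete identification you are missing: one has
\[
\text{At}(E_G)\;\cong\;\bigl(\text{ad}(E_G)\oplus\text{At}(E_H)\bigr)\big/\text{ad}(E_H),
\]
where $\text{ad}(E_H)$ is embedded antidiagonally via $v\mapsto(\iota(v),-\iota''(v))$. With this description the connection is simply $\varphi([v,w])=v+\beta'(w)$, where $\beta'$ is the composite $\text{At}(E_H)\to\text{At}(E_H)/\lambda'(T\mathcal F)\xrightarrow{\beta}\text{ad}(E_G)$; the commutativity of \eqref{cg1} guarantees this is well defined on the quotient, and it is visibly the identity on $\text{ad}(E_G)$. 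Once you have this formula, the flatness along $\mathcal F$ is exactly the one-line argument you sketched: since $\beta'$ kills $\lambda'(T\mathcal F)$, the restriction of $\varphi$ to $T\mathcal F$ coincides with the partial connection on $E_G$ induced by $\lambda$, and $\lambda$ is flat. The extra curvature computation you outline (brackets of lifted vector fields, Lemma~\ref{lemma1}, $L_X\varpi=0$) is not needed.
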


The proof below is that given in \cite{BD3}. It does not use condition II) (2) in the definition of the transverse 
Cartan geometry and works as well for transverse branched and generalized Cartan geometries.

\begin{proof} 
Consider the homomorphism
\begin{equation}\label{eh}
\text{ad}(E_H)\,\longrightarrow\, {\rm ad}(E_G)
\oplus \text{At}(E_H)\, , \ \ v \, \longmapsto\,
(\iota(v),\, -\iota''(v))
\end{equation}
(see \eqref{i1} and \eqref{at1} for $\iota$ and $\iota''$ respectively).

The corresponding quotient $({\rm ad}(E_G)\oplus \text{At}(E_H))/\text{ad}(E_H)$
is identified with the Atiyah bundle
${\rm At}(E_G)$. The inclusion of $\text{ad}(E_G)$ in ${\rm At}(E_G)$ as in
\eqref{at1} is given by the inclusion $\text{ad}(E_G) \, \hookrightarrow\,
{\rm ad}(E_G)\oplus \text{At}(E_H)$, $w\, \longmapsto\, (w,\, 0)$, while the projection
${\rm At}(E_G)\, \longrightarrow\, TM$ is given by the
composition
$$
{\rm At}(E_G)\, \hookrightarrow\, {\rm ad}(E_G)
\oplus \text{At}(E_H) \,\stackrel{(0,\widehat{\mathrm{d}}\pi)}{\longrightarrow}\, TM\, ,
$$
with $\widehat{\mathrm{d}}\pi $  the projection in \eqref{at1}.

Consider the subbundle $\lambda'(T{\mathcal F})\, \subset\, \text{At}(E_H)$ in \eqref{at3}.
The composition
$$
{\rm At}(E_H)\, \longrightarrow\, \text{At}(E_H)/\lambda'(T{\mathcal F})
\,\stackrel{\beta}{\longrightarrow}\, \text{ad}(E_G)\, ,
$$
where the first homomorphism is the quotient map, will be denoted by $\beta'$. The homomorphism
\begin{equation}\label{hv}
{\rm ad}(E_G)\oplus \text{At}(E_H)\, \longrightarrow\, {\rm ad}(E_G)\, , \ \
(v,\, w) \, \longmapsto\, v+\beta'(w)
\end{equation}
vanishes on the image of $\text{ad}(E_H)$ by the map in \eqref{eh}. Therefore,
the homomorphism in \eqref{hv} produces a homomorphism
\begin{equation}\label{vp}
\varphi\, :\, \text{At}(E_G)\,=\, ({\rm ad}(E_G)\oplus \text{At}(E_H))/\text{ad}(E_H)
\,\longrightarrow\, \text{ad}(E_G)\, .
\end{equation}
The composition
$$
\text{ad}(E_G)\,\hookrightarrow\, \text{At}(E_G)\, \stackrel{\varphi}{\longrightarrow}\,\text{ad}(E_G)
$$
 is  the identity map of $\text{ad}(E_G)$. Consequently,  $\varphi$ defines a holomorphic
connection on the principal $G$--bundle $E_G$ (see above  and also \cite{At}). 

It remains to prove  that $\varphi$ is flat along $\mathcal F$. Since the homomorphism $\beta$ in \eqref{beta} is partial connection 
preserving, it follows that the restriction of the connection $\varphi$ in the direction of $\mathcal F$ coincides with partial connection along $\mathcal F$ induced on $E_G$ by $\lambda$. Since $\lambda$ is flat along $\mathcal F$, the corresponding 
induced connection on $E_G$ is flat as well.
\end{proof} 

Denote by 
$$
{\rm Curv}(\varphi)\, \in\, H^0(M,\, \text{ad}(E_G)\otimes\Omega^2_M)
$$
the curvature of the  connection $\varphi$.

Since $\beta$ is connection preserving, the contraction of ${\rm Curv}( 
\varphi)$ with  any tangent vector of $T\mathcal F$ vanishes. This 
implies that ${\rm Curv}(\varphi)$ is actually a section of ${\rm ad}(E_G)\otimes 
\bigwedge\nolimits^2 {\mathcal N}^*_{\mathcal F}$.

The transverse (branched  or generalized) Cartan geometry $((E_H, \theta),\, \beta)$ is 
called \textit{flat} if the above  curvature tensor  ${\rm Curv}(\varphi)$ vanishes identically.

Assume that the transverse Cartan geometry is flat and $M$ is simply connected. Then the flat bundle $E_G$ is 
trivial over $M$, isomorphic to $M \times E_G$. The subbundle $E_H \subset E_G$ is described as a holomorphic 
reduction of the structure group of $E_G$ to $H$ and hence by a holomorphic map $M \longrightarrow G/H$.  This is 
the developing map $\text{dev}$ of the transverse (branched or generalized) flat Cartan geometry.
 
The differential of this developing map $\text{dev}$ is given by the homomorphism $\overline{\beta}$ in( 
\ref{cg1}). Consequently, in the branched case, the developing map is {\it a submersion away from the branching 
divisor}. There is no condition on the differential of the developing map for a transverse generalized Cartan 
geometry.
 
Since the connection $\varphi$ on $E_G$ along $\mathcal F$  is induced  from the  flat connection $\lambda$ on $E_H$, the developing map  $\text{dev}$ is constant on each (connected)  leaf of $\mathcal F$.
 
If $M$ is not simply connected the monodromy morphism of the flat connection $\varphi$ on $E_G$ is the monodromy 
morphism of the transverse branched Cartan geometry. When pulled-back on the universal cover $\widetilde M$, the 
flat bundle $E_G$ becomes trivial and the associated developing map obtained as above is equivariant with respect 
to the monodromy morphism. For more details about the construction of the developing map the reader is referred to 
\cite{Bl, Mo, BD3,BD4}.

{\it Singular Foliations.} Let us consider now a complex manifold $\widehat{M}$ endowed with a holomorphic singular 
foliation $\mathcal F$. It is classically known that there exists a maximal open dense set $M \,\subset\, \widehat{M}$ 
such that in restriction to $M$ the foliation $\mathcal F$ is a nonsingular foliation. Moreover, this maximal open 
set $M$ is the complementary of an analytic subset in $\widehat{M}$ which is of complex codimension at least two in 
$\widehat{M}$.

We will say that $(\widehat{M},\, \mathcal F)$ admits a transverse (branched or generalized) holomorphic Cartan 
geometry with model $(G,\,H)$ if $(M,\, \mathcal F)$ admits a transverse (branched or generalized) holomorphic Cartan 
geometry with model $(G,\,H)$.

An easy example of this situation is given by fibrations over homogeneous spaces. More precisely, consider a 
complex manifold $\widehat{M}$ which admits a holomorphic map $\rho$ to a complex homogeneous space $G/H$ which is 
a submersion on a nontrivial open dense set in $\widehat{M}$. Then the fibers of $\rho$ define a singular 
holomorphic foliation $\mathcal F$ on $\widehat{M}$ bearing a transverse branched holomorphic flat Cartan geometry 
with model $(G,H)$ in the sense of the above definition.

More generally, it was proved in \cite[Proposition 3.2]{BD3} that if $\rho\,:\, \widehat{M}\,\longrightarrow\, N$ 
is a holomorphic map which is a submersion of a nontrivial open dense set in $\widehat{M}$ and $N$ admits a 
holomorphic Cartan geometry with model $(G,H)$, then the holomorphic (singular) foliation $\mathcal F$ defined by 
the fibers of $\rho$ bears a transverse branched holomorphic Cartan geometry with model $(G,H)$. This transverse 
geometry is flat if and only if the Cartan geometry on $N$ is flat.
 
\subsection{Flatness results}
 
\subsubsection{Rationally connected manifolds}
 
Let us recall that a complex projective manifold $\widehat{M}$ is called {\it rationally connected} if for any pair 
of points $m,\,n \,\in\, \widehat{M}$ there exists a (maybe singular) rational curve $C\,\subset\, \widehat{M}$ such that 
$m,n \in C$.
 
Examples of rationally connected projective manifolds are given by Fano projective manifolds \cite{Ca2,KMM}. Recall 
that a projective manifold $\widehat{M}$ is Fano if its anti-canonical bundle $-K_{\widehat M}$ is ample.
 
In this context the following result was proved in \cite{BD3}.
 
\begin{theorem}\label{thmflat}
Let $\widehat M$ be a rationally connected projective manifold endowed with a (possibly singular) holomorphic 
foliation $\mathcal F$.  Then:

(1) Any transverse generalized holomorphic Cartan geometry with model $(G,\,H)$ on $(\widehat{M},\, \mathcal F)$ is 
flat and defined by a holomorphic map $\widehat{M} \,\longrightarrow\, G/H$ (constant on the leafs of $\mathcal F$);

(2) There is no transverse branched holomorphic Cartan geometry on $(\widehat{M}, \,\mathcal F)$ with model a 
nontrivial analytic affine variety $G/H$.  In particular, there is no transverse branched holomorphic affine 
connection on $(\widehat{M}, \,\mathcal F)$.
\end{theorem}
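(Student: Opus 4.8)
The plan is to reduce both assertions to the vanishing of the curvature of the holomorphic connection on $E_G$ supplied by Lemma \ref{lem connection E_G}, using the abundance of rational curves on $\widehat M$ in the spirit of \cite{BM}. Write $M\subset\widehat M$ for the open set on which $\mathcal F$ is nonsingular, so that $Z:=\widehat M\setminus M$ is analytic of complex codimension at least two. Given a transverse generalized (resp.\ branched) holomorphic Cartan geometry $((E_H,\lambda),\beta)$ with model $(G,H)$ on $(M,\mathcal F)$, Lemma \ref{lem connection E_G} yields a holomorphic connection $\varphi$ on $E_G=E_H\times^H G$ over $M$, and (as recorded there) $\mathrm{Curv}(\varphi)$ is a holomorphic section of $\mathrm{ad}(E_G)\otimes\bigwedge^2\mathcal N_{\mathcal F}^*$. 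The first and main step is to show $\mathrm{Curv}(\varphi)\equiv 0$ (there is nothing to prove when $\mathrm{rk}\,\mathcal N_{\mathcal F}\le 1$).

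For this I would restrict $\varphi$ and $\mathrm{Curv}(\varphi)$ to a very free rational curve $f\colon\mathbb P^1\to\widehat M$ chosen so that $f(\mathbb P^1)\subset M$; since $\widehat M$ is rationally connected, very free curves exist, a general one through a general point of $M$ avoids the codimension $\ge 2$ locus $Z$, and such curves still sweep out a dense subset of $\widehat M$ (standard theory, see \cite{Ca2,KMM}). Along such a curve, $f^*\mathrm{ad}(E_G)$ inherits a holomorphic connection, so it is a trivial bundle on $\mathbb P^1$; and $f^*\mathcal N_{\mathcal F}$, being a bundle quotient of the ample bundle $f^*T\widehat M$ (the foliation is nonsingular along $f(\mathbb P^1)\subset M$), is ample, so every line-bundle summand of $\bigwedge^2 f^*\mathcal N_{\mathcal F}^*$ has degree $\le -2$. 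Hence $f^*\bigl(\mathrm{ad}(E_G)\otimes\bigwedge^2\mathcal N_{\mathcal F}^*\bigr)$ is a direct sum of negative line bundles on $\mathbb P^1$, has no nonzero section, and therefore $\mathrm{Curv}(\varphi)$ vanishes along $f(\mathbb P^1)$. Letting $f$ run over a covering family of such curves forces $\mathrm{Curv}(\varphi)\equiv 0$ on $M$; that is, the transverse (generalized or branched) Cartan geometry is flat.

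This already gives the flatness in (1). For the rest of (1): $\widehat M$, being rationally connected, is simply connected \cite{Ca1}, and removing the codimension $\ge 2$ set $Z$ does not change the fundamental group, so $\pi_1(M)=1$. Thus the now genuinely flat connection $\varphi$ has trivial monodromy, $E_G$ is the trivial flat $G$-bundle $M\times G$, and the holomorphic reduction $E_H\subset E_G$ is precisely a holomorphic map $M\to G/H$; by the construction recalled in Section \ref{section foliated Atyiah} this is the developing map, it is constant on the leaves of $\mathcal F$, and it extends across $Z$ (codimension $\ge 2$) to a holomorphic map $\widehat M\to G/H$ defining the geometry, as in \cite{BD3}.

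For (2) I would argue by contradiction. Suppose $(\widehat M,\mathcal F)$ carries a transverse branched holomorphic Cartan geometry with model $(G,H)$ where $G/H$ is a nontrivial affine variety. By the above it is flat, so there is a developing map $\mathrm{dev}\colon M\to G/H$ whose differential is the homomorphism $\overline\beta$ of \eqref{cg1}, an isomorphism over the complement of the branching divisor; in particular $\mathrm{dev}$ is nonconstant since $\dim G/H\ge 1$. As $G/H$ is affine, $\mathrm{dev}$ extends across the codimension $\ge 2$ set $Z$ (extend, by the Hartogs phenomenon, the coordinate functions of an embedding $G/H\hookrightarrow\mathbb C^N$), producing a nonconstant holomorphic map from the compact manifold $\widehat M$ to an affine variety --- impossible. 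Since a transverse branched holomorphic affine connection is exactly a transverse branched holomorphic Cartan geometry with model $(\mathrm{GL}(n,\mathbb C)\ltimes\mathbb C^n,\,\mathrm{GL}(n,\mathbb C))$, whose homogeneous space $\mathbb C^n$ with $n=\mathrm{codim}\,\mathcal F\ge 1$ is a nontrivial affine variety, the final assertion follows. The hard part throughout is the curvature-vanishing step of the second paragraph: one must secure very free rational curves that avoid the singular locus of $\mathcal F$ while still forming a covering family, and then combine the triviality of $f^*\mathrm{ad}(E_G)$ with the ampleness of $f^*\mathcal N_{\mathcal F}$ to leave no room for a nonzero curvature; everything after that is essentially formal.
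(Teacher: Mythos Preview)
Your proposal is correct and follows essentially the same route as the paper: reduce to flatness of the induced connection $\varphi$ on $E_G$ via Lemma \ref{lem connection E_G}, kill the curvature by restricting to very free rational curves inside the nonsingular locus $M$, then use simple connectivity of rationally connected manifolds to get a global developing map and extend it across the codimension $\ge 2$ singular set by Hartogs. The only cosmetic difference is that the paper kills the curvature using the cruder inclusion ${\rm Curv}(\varphi)\in H^0(M,{\rm ad}(E_G)\otimes\Omega^2_M)$ together with ampleness of $(TM)|_C$ (so $H^0(C,(\Omega^2_M)|_C)=0$), whereas you use the sharper location ${\rm ad}(E_G)\otimes\bigwedge^2\mathcal N_{\mathcal F}^*$ and ampleness of $f^*\mathcal N_{\mathcal F}$; both arguments are equivalent here and the rest of your proof matches the paper's.
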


Notice that if $G$ is a complex linear algebraic group and $H$ a closed reductive 
algebraic subgroup, then $G/H$ is an affine analytic variety (see Lemma 3.32 in 
\cite{Mc}).

\begin{proof}
Let  $\widehat M$ be a  complex projective rationally connected manifolds. Consider $M \, \subset\,
\widehat{M}$ be  the maximal  open subset such that  the foliation $\mathcal F$ is nonsingular on $M$. Then 
the complex codimension of the complement of $M$ in $\widehat{M}$ is at least two.

(1) Let $((E_H,\, \lambda),\, \beta)$ be a transverse  generalized  holomorphic Cartan geometry with model 
$(G,\, H)$ on the foliated manifold $(M,\, {\mathcal F})$.

By Lemma \ref{lem connection E_G} the bundle $E_G$ (obtained from $E_H$ by extension of the structure group) 
inherits a holomorphic connection $\varphi$ over $M$ (flat in the direction of $\mathcal F$). On a smooth curve any 
holomorphic connection is flat. In particular, the restriction of $E_G$ to any smooth rational curve lying in $M$ 
is flat. Moreover, since a rational curve is simply connected, the flat holomorphic bundle $E_G$ over the rational 
curve is isomorphic to the trivial bundle endowed with the trivial connection.

There is a nonempty open subset of $M$ which is covered by smooth rational curves $C \subset M$ such that the 
restriction  $(TM)\vert_C$ of the holomorphic tangent bundle to $C$ is ample. Consequently, $H^0(C,\, 
(\Omega^2_M)\vert_C) \,=\, 0$. In particular, the curvature of the holomorphic connection $\varphi$ vanishes 
identically on $M$. By definition of its curvature, the transverse generalized holomorphic Cartan geometry 
$((E_H,\, \lambda),\, \beta)$ is flat.

Rationally connected manifolds are known to be simply connected \cite{Ca1}. Hence $\widehat{M}$ is simply 
connected. Since $M$ is a dense open subset of complex codimension at least $2$, its fundamental group is 
isomorphic to that of $\widehat{M}$. It follows that $M$ is simply connected. Therefore, the developing map 
$\text{dev}\, :\,M \, \longrightarrow\, G/H$ of the transverse generalized flat Cartan geometry $((E_H,\, \lambda 
),\, \beta)$ is defined on $M$.  Recall that the developing is constant on the leafs of $\mathcal F$.

By Hartog's extension theorem $\text{dev}$ extends as a holomorphic map defined on $\widehat{M}$. 

(2) In the branched case, the above developing map $\text{dev}\, :\, \widehat{M} \, \longrightarrow\, G/H$ is a holomorphic submersion on an open dense set and the generic leafs of $\mathcal F$ coincide with the  connected components of the fibers
of the developing map. But if $G/H$ is an analytic affine variety any holomorphic map from the compact manifold $\widehat{M}$ to $G/H$ must be constant: a contradiction. In particular, this holds if $G/H$ is the complex affine space (the model of an affine
connection transverse to $\mathcal F$).
\end{proof} 

\subsubsection{Simply connected Calabi--Yau manifolds}

Let $\widehat M$ be a simply connected compact K\"ahler manifold with vanishing real first Chern class 
$c_1(T\widehat{M})\,=\, 0$.

Those manifolds are known as simply connected {\it Calabi-Yau manifolds.} Recall that by Yau's proof of Calabi 
conjecture they are endowed with Ricci flat K\"ahler metrics.

In this context, using the main result in \cite{BD} we proved in \cite{BD3} the following result:

\begin{theorem}
Let $\widehat M$ be a simply connected Calabi-Yau manifold endowed with a (possible singular) holomorphic foliation 
$\mathcal F$. Let $G$ be a complex Lie group which is simply connected or complex semi-simple and let $H \subset G$ 
be a closed complex Lie subgroup. Then:

(1) Any transverse generalized holomorphic Cartan geometry with model $(G,\,H)$ on $(\widehat{M},\, \mathcal F)$ is 
flat and defined by a holomorphic map $\widehat{M} \,\longrightarrow\, G/H$ (constant on the leafs of $\mathcal F$).

(2) If the model $G/H$ is a nontrivial analytic affine variety $G/H$, then there is no transverse branched Cartan 
geometry on $(\widehat{M},\, \mathcal F)$ with model $(G,H)$.  In particular, there is no transverse branched 
holomorphic affine connection on $(\widehat{M},\, \mathcal F)$.
\end{theorem}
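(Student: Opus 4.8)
The plan is to follow the strategy of Theorem~\ref{thmflat}, with the family of ample rational curves used there replaced by the Ricci-flat K\"ahler structure provided by Yau's theorem \cite{Ya}, and to reduce the foliated statement to the non-foliated flatness result of \cite{BD}. First I would pass to the maximal open subset $M\,\subset\,\widehat M$ on which $\mathcal F$ is a regular foliation; as recalled above, $\widehat M\setminus M$ is an analytic subset of complex codimension at least two, so $M$ is again simply connected. A transverse generalized holomorphic Cartan geometry $((E_H,\,\lambda),\,\beta)$ with model $(G,\,H)$ on $(\widehat M,\,\mathcal F)$ is, by definition, such a geometry on $(M,\,\mathcal F)$, and Lemma~\ref{lem connection E_G} turns it into a holomorphic connection $\varphi$ on the principal $G$--bundle $E_G\,\longrightarrow\, M$ which is flat in the direction of $\mathcal F$; its curvature ${\rm Curv}(\varphi)$ is a holomorphic section of ${\rm ad}(E_G)\otimes\bigwedge^2\mathcal N_{\mathcal F}^*$ over $M$.

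The core of the argument is to prove that $\varphi$ is flat. Here I would invoke the main result of \cite{BD}: since $\widehat M$ is a simply connected compact K\"ahler Calabi--Yau manifold and $G$ is simply connected or complex semi-simple, a holomorphic principal $G$--bundle carrying a holomorphic connection is holomorphically trivial and the connection on it is flat. The mechanism behind this is that the existence of a holomorphic connection forces all Chern--Weil characteristic classes of $E_G$ to vanish (Atiyah); combined with the Ricci-flat K\"ahler metric coming from \cite{Ya} and the Uhlenbeck--Yau/Bando--Siu correspondence \cite{UY}, the bundle is polystable with vanishing Chern classes, hence carries a flat structure and is trivial because $\widehat M$ is simply connected; finally, since $H^0(\widehat M,\,\Omega^1_{\widehat M})\,=\,0$ for a simply connected compact K\"ahler manifold, the only holomorphic connection on the trivial $G$--bundle is the trivial (flat) one. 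To deploy this on $M$ rather than on $\widehat M$, one uses that $\widehat M\setminus M$ has codimension at least two: the relevant holomorphic objects (the bundle data, and then the $\mathfrak g$--valued $1$--form describing $\varphi$ on the trivialized bundle) extend across $\widehat M\setminus M$ by Hartogs, so the vanishing on $\widehat M$ yields ${\rm Curv}(\varphi)\,=\,0$ on $M$.

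Once $\varphi$ is flat, $M$ being simply connected the flat bundle $E_G$ is holomorphically trivial over $M$ together with its connection, and the reduction of structure group $E_H\,\subset\, E_G$ becomes a holomorphic map ${\rm dev}\,:\, M\,\longrightarrow\, G/H$; because $\varphi$ restricts along $\mathcal F$ to the flat partial connection induced by $\lambda$, this developing map is constant on the leaves of $\mathcal F$, and by Hartogs it extends to a holomorphic map $\widehat M\,\longrightarrow\, G/H$. This proves~(1). For~(2), if the transverse Cartan geometry is branched (and not merely generalized), then by the commutativity of \eqref{cg1} the homomorphism $\overline\beta\,:\,\mathcal N_{\mathcal F}\,\longrightarrow\,{\rm ad}(E_G)/{\rm ad}(E_H)$ is an isomorphism over a dense open subset of $M$, so ${\rm dev}$ is a submersion there, in particular non-constant; but if $G/H$ is a nontrivial analytic affine variety then, embedding $G/H$ in some $\CC^N$, every holomorphic map from the compact connected manifold $\widehat M$ to $G/H$ is constant --- a contradiction. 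Taking $(G,\,H)\,=\,({\rm GL}(n,\CC)\ltimes\CC^n,\,{\rm GL}(n,\CC))$ gives the assertion about transverse branched holomorphic affine connections.

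The main obstacle is the flatness step. Unlike in the rationally connected case of Theorem~\ref{thmflat}, there is no covering family of ample rational curves against which to test ${\rm Curv}(\varphi)$, so one genuinely needs the Ricci-flat K\"ahler metric and the structure of its holonomy (the de~Rham and Beauville--Bogomolov decompositions, as in the proof of Theorem~\ref{kahler even}), together with the hypothesis that $G$ is simply connected or semi-simple; the careful transfer of the compact-manifold conclusions of \cite{BD} to the open set $M$ through the codimension-$\ge 2$ singular locus, via Hartogs-type extensions, is the remaining technical point.
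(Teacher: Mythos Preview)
Your proposal is correct and follows the same overall architecture as the paper's proof: restrict to the regular locus $M$, use Lemma~\ref{lem connection E_G} to produce a holomorphic connection $\varphi$ on $E_G$, extend the pair $(E_G,\varphi)$ across the codimension-two singular set to all of $\widehat M$, invoke \cite{BD} on the compact Calabi--Yau manifold to force flatness, and then run the developing-map construction, Hartogs extension, and affine-target contradiction exactly as you describe.

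The only noteworthy difference is in the execution of the flatness step. You argue abstractly that the principal bundle becomes trivial (polystability, vanishing Chern classes, simple connectedness) and then use $H^0(\widehat M,\Omega^1_{\widehat M})=0$ to pin down the connection. The paper instead picks a linear representation $\alpha:G\longrightarrow{\rm GL}(N,\mathbb C)$ with discrete kernel --- this is exactly where the hypothesis that $G$ is simply connected or semi-simple is used, via Ado's theorem in the first case and standard structure theory in the second --- applies \cite[Theorem~6.2]{BD} to the associated \emph{vector} bundle $E_\alpha$ with its induced holomorphic connection, and then pulls flatness back to $\widehat\varphi$ via the injectivity of $d\alpha$. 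The paper also cites \cite{Bi} rather than a bare Hartogs argument for extending the principal bundle and its connection across $\widehat M\setminus M$; your ``Hartogs for the bundle data'' is morally right but this is the precise reference you would want.
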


\begin{proof}
Let  $\widehat M$ be a  complex projective rationally connected manifolds. Consider $M \, \subset\,
\widehat{M}$ be  the maximal  open subset such that  the foliation $\mathcal F$ is nonsingular on $M$. Then 
the complex codimension of the complement of $M$ in $\widehat{M}$ is at least two.

(1) Let $((E_H,\, \lambda),\, \beta)$ be a transverse  generalized  holomorphic Cartan geometry with model 
$(G,\, H)$ on the foliated manifold $(M,\, {\mathcal F})$.

By Lemma \ref{lem connection E_G} the bundle $E_G$ (obtained  from $E_H$ by extension of the structure group) inherits a holomorphic connection  $\varphi$ over $M$  (flat in the direction of $\mathcal F$). 

We prove now that  $\varphi$ is flat on entire $M$.

The principal $G$--bundle $E_G$ extends to a 
holomorphic principal $G$--bundle $\widehat{E}_G$ over $\widehat M$, and the connection 
$\varphi$ extends to a holomorphic connection $\widehat\varphi$ on $\widehat{E}_G$ \cite[Theorem 
1.1]{Bi}. 

Consider  $\alpha  \,:\, G \,\longrightarrow\, {\rm GL}(N, {\mathbb C})$,  a linear representation of $G$ with discrete kernel. The corresponding Lie algebra representation $\alpha'$ is an injective Lie algebra homomorphism
from $\mathfrak g$ to $\mathfrak{ gl}(N, {\mathbb C}). $ For $G$ simply connected those representations exist by Ado's theorem. For $G$ complex 
semi-simple those representations do also exist (see Theorem 3.2, chapter XVII in \cite{H}).

Consider the holomorphic vector bundle $E_{\alpha}$  over $\widehat{M}$  with fiber type  ${\mathbb C}^N$ associated to $\widehat{E}_G$ via the representation $\alpha$.  We have seen that $E_{\alpha}$  inherits from $\widehat\varphi$  a 
holomorphic connection $\widehat{\varphi}_{\alpha}$. By Theorem \cite[Theorem 6.2]{BD}, the holomorphic  connection $\widehat{\varphi}_{\alpha}$
 is  flat over $\widehat{M}$. Since the curvature of $\widehat{\varphi}_{\alpha}$ is the image of the curvature of $\widehat{\varphi}$  through  the Lie algebra homomorphism $\alpha'$ and $\alpha'$ is injective, it follows that $\widehat{\varphi}$ is also flat. 
 Therefore $\varphi$ is flat and   the transverse  generalized holomorphic Cartan 
geometry $((E_H,\, \lambda),\, \beta)$ is flat.

Since $M$ is simply connected,  the  developing map  $\text{dev}\, :\,M \, \longrightarrow\, G/H$ of the transverse  flat generalized  Cartan geometry  $((E_H,\, \lambda ),\, \beta)$ is defined on $M$.  
Recall that  the developing is constant on the leafs of $\mathcal F$.

By Hartog's extension theorem $\text{dev}$ extends as a holomorphic map defined on $\widehat{M}$. 

(2) In the branched case, the above developing map $\text{dev}\, :\, \widehat{M} \, \longrightarrow\, G/H$ is a holomorphic submersion on an open dense set and the generic leafs of $\mathcal F$ coincide with the connected components of the fibers
of the developing map. But if $G/H$ is an analytic affine variety any holomorphic map from the compact manifold $\widehat{M}$ to $G/H$ must be constant: a contradiction. In particular, this holds if $G/H$ is the complex affine space (the model of an affine
connection transverse to $\mathcal F$).
\end{proof} 

\subsection{A topological criterion}

Let $M$ be a compact connected K\"ahler manifold of complex dimension $n$ equipped with a K\"ahler
form $\omega$. 

For any holomorphic vector bundle $V$ over $M$ we define 
\begin{equation}\label{deg}
\text{degree}(V)\,:=\, (c_1(V)\cup\omega^{n-1})\cap [M]\, \in\, {\mathbb R}\,,
\end{equation}

with $c_1(V)$ the real first Chern class of $V$.
The degree of a divisor $D$ on $M$ is defined as being  $\text{degree}({\mathcal O}_M(D))$.

The degree is a topological invariant.

Fix an effective divisor $D$ on $X$. Fix a holomorphic principal $H$--bundle $E_H$ 
on $X$.

\begin{theorem}\label{thm1}
Let $M$ be a K\"ahler manifold endowed with a nonsingular holomorphic foliation $\mathcal F$. Assume that 
$(M,\,\mathcal F)$ admits a transverse branched holomorphic Cartan geometry $((E_H,\, \lambda),\, \beta)$ with model 
$(G,H)$ and branching divisor $D \,\subset\, M$.

Then ${\rm degree}({\mathcal 
N}^*_{\mathcal F})-{\rm degree}(D)\, =\, {\rm degree}({\rm ad}(E_H))$.

In particular, if $D\, \not=\, 0$, then ${\rm degree}({\mathcal N}^*_{\mathcal F})\,>\, 
{\rm degree}({\rm ad}(E_H))$.
\end{theorem}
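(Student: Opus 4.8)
The plan is to identify the line bundle ${\mathcal O}_M(D)$ inside the foliated Atiyah sequence \eqref{at3} and then pass to degrees. Write $r\,=\,\dim_{\mathbb C}{\mathfrak g}$; this is the common rank of the holomorphic vector bundles $\text{At}(E_H)/\lambda'(T{\mathcal F})$ and $\text{ad}(E_G)$. Recall from the discussion preceding the statement that the branching divisor $D$ is, by definition, the zero divisor of the holomorphic section $\bigwedge^r\beta$ of the line bundle $\big(\det(\text{At}(E_H)/\lambda'(T{\mathcal F}))\big)^{*}\otimes\det\text{ad}(E_G)$; this section is not identically zero precisely because $\beta$ is an isomorphism over a nonempty open subset of $M$ by hypothesis. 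Hence there is an isomorphism of holomorphic line bundles
$$
{\mathcal O}_M(D)\,\cong\,\big(\det(\text{At}(E_H)/\lambda'(T{\mathcal F}))\big)^{*}\otimes\det\text{ad}(E_G)\, .
$$

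Next I would apply $\det$ to the exact sequence \eqref{at3}, namely $0\,\to\,\text{ad}(E_H)\,\to\,\text{At}(E_H)/\lambda'(T{\mathcal F})\,\to\,{\mathcal N}_{\mathcal F}\,\to\,0$, to get $\det(\text{At}(E_H)/\lambda'(T{\mathcal F}))\,\cong\,\det\text{ad}(E_H)\otimes\det{\mathcal N}_{\mathcal F}$. Substituting this into the previous isomorphism and taking degrees with respect to the fixed K\"ahler form $\omega$ (using that $\text{degree}$ is additive in short exact sequences and that $\text{degree}(V)\,=\,\text{degree}(\det V)$) gives
$$
\text{degree}(D)\,=\,-\,\text{degree}(\text{ad}(E_H))\,-\,\text{degree}({\mathcal N}_{\mathcal F})\,+\,\text{degree}(\text{ad}(E_G))\, .
$$

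The only non-formal ingredient is then the vanishing $\text{degree}(\text{ad}(E_G))\,=\,0$, and I expect this to be the step requiring care. For it I would invoke Lemma \ref{lem connection E_G}: the transverse branched Cartan geometry $((E_H,\,\lambda),\,\beta)$ equips the principal $G$--bundle $E_G$ with a holomorphic connection, which therefore induces a holomorphic connection on $\text{ad}(E_G)$ and hence on the line bundle $\det\text{ad}(E_G)$. On the compact K\"ahler manifold $M$ the curvature of a holomorphic connection on a line bundle is a holomorphic $2$--form, hence a closed form of type $(2,0)$; since it also represents (a nonzero multiple of) the real Chern class, which is of type $(1,1)$, classical Hodge theory forces it to be cohomologically trivial (this is the same argument recalled in the excerpt for the Chern classes of manifolds admitting a holomorphic connection). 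Thus $c_1(\text{ad}(E_G))\,=\,0$, so $\text{degree}(\text{ad}(E_G))\,=\,0$. One should resist arguing this representation-theoretically: the character $g\,\mapsto\,\det(\text{Ad}_g)$ of $G$ need not be trivial --- for instance for the affine model $G\,=\,\text{GL}(m,{\mathbb C})\ltimes{\mathbb C}^m$ --- so the connection furnished by Lemma \ref{lem connection E_G} together with the compact K\"ahler hypothesis is genuinely needed.

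Inserting $\text{degree}(\text{ad}(E_G))\,=\,0$ into the displayed equality and rewriting $-\,\text{degree}({\mathcal N}_{\mathcal F})\,=\,\text{degree}({\mathcal N}^{*}_{\mathcal F})$ yields exactly $\text{degree}({\mathcal N}^{*}_{\mathcal F})-\text{degree}(D)\,=\,\text{degree}(\text{ad}(E_H))$. Finally, the branching divisor $D$ is effective, so if $D\,\neq\,0$ then $\text{degree}(D)\,=\,(c_1({\mathcal O}_M(D))\cup\omega^{n-1})\cap[M]\,>\,0$ because $\omega$ is a K\"ahler form; combined with the identity just proved, this gives the strict inequality $\text{degree}({\mathcal N}^{*}_{\mathcal F})\,>\,\text{degree}(\text{ad}(E_H))$.
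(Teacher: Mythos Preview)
Your proof is correct and follows essentially the same route as the paper. The only cosmetic difference is that the paper computes the branching divisor via the top exterior power of the quotient map $\overline{\beta}\,:\,{\mathcal N}_{\mathcal F}\to\text{ad}(E_G)/\text{ad}(E_H)$ (in rank $k=\dim G/H$) rather than of $\beta$ itself (in rank $r=\dim\mathfrak g$), and cites Atiyah's theorem directly for $c_1(\text{ad}(E_G))=0$ instead of spelling out the Hodge-theoretic argument; by the commutative diagram \eqref{cg1} these computations are equivalent, and your added remark that the vanishing of $c_1(\text{ad}(E_G))$ genuinely requires the holomorphic connection (rather than a representation-theoretic identity) is a useful clarification.
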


\begin{proof} Let $k$ be the complex dimension of the transverse model geometry $G/H$.

Recall that the homomorphism $\overline{\beta} \,:\, {\mathcal N}_{\mathcal 
F}\,\longrightarrow\, \text{ad}(E_G)/\text{ad}(E_H)$ in \eqref{cg1} is an 
isomorphism over a point $m  \in\, M$ if and only if $\beta(m)$ is an isomorphism.

The branching divisor $D$ coincides with the vanishing divisor of the holomorphic section $\bigwedge^k \overline{ \beta}$ of the holomorphic line bundle $\bigwedge^k ({\mathcal N}^*_{\mathcal F}) \otimes \bigwedge^k ({\rm ad}(E_G)/{\rm ad}(E_H))$. We have
$$
\text{degree}(D)\,=\, \text{degree}(\bigwedge\nolimits^k ({\rm ad}(E_G)/{\rm ad}(E_H))
\otimes \bigwedge^k ({\mathcal N}^*_{\mathcal F}) )
$$
\begin{equation}\label{f2}
=\, \text{degree}({\rm ad}(E_G)) - \text{degree}({\rm ad}(E_H))
+ {\rm degree}({\mathcal N}^*_{\mathcal F})\, .
\end{equation}
Recall that $E_G$ has a holomorphic connection $\phi$ (see \eqref{vp}) which induces a holomorphic connection on $\text{ad}(E_G)$. Hence we have
$c_1({\rm ad}(E_G)) \,=\, 0$ \cite[Theorem~4]{At}, which implies that
$\text{degree}({\rm ad}(E_G)) \,=\, 0$. Therefore, from \eqref{f2} it follows that
\begin{equation}\label{e7}
{\rm degree}({\mathcal N}^*_{\mathcal F})-{\rm degree}(D)\, =\, {\rm degree}({\rm ad}(E_H))\, .
\end{equation}

If $D\,\not=\, 0$, then $\text{degree}(D)\, >\, 0$. Hence in that case \eqref{e7} yields
${\rm degree}({\mathcal N}^*_{\mathcal F} )\, >\, {\rm degree}({\rm ad}(E_H))$.
\end{proof}

\begin{corollary}\label{corollaire deg}\mbox{} The hypothesis and notation  of Theorem \ref{thm1} is used.
\begin{enumerate}
\item[(i)] If ${\rm degree}({\mathcal N}^*_{\mathcal F})\, <\, 0$, then there is no transverse 
branched  holomorphic affine connection on $(M,{\mathcal F})$.

\item[(ii)] If ${\rm degree}({\mathcal N}^*_{\mathcal F})\, =\, 0$, then  every transverse  branched
 holomorphic affine connection on $(M,\mathcal F)$ has  a trivial  branching divisor on $M$.
\end{enumerate}
\end{corollary}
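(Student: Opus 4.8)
The plan is to specialize the degree identity of Theorem \ref{thm1} to the affine model, where the adjoint bundle ${\rm ad}(E_H)$ contributes nothing to the equality. First I would record what the model is: a transverse branched holomorphic affine connection on $(M,\mathcal F)$ is, by definition, a transverse branched holomorphic Cartan geometry whose model is the complex affine space, i.e.\ $(G,H)\,=\,({\rm GL}(k,\CC)\ltimes\CC^k,\, {\rm GL}(k,\CC))$, and conditions II)(2)--(3) force the complex dimension $k$ of $G/H$ to equal the complex codimension of $\mathcal F$. Hence the principal $H$--bundle underlying such a structure is a principal ${\rm GL}(k,\CC)$--bundle $E_H$ on $M$, and $\mathrm{ad}(E_H)\,=\,E_H\times^{{\rm GL}(k,\CC)}\mathfrak{gl}(k,\CC)$ with the conjugation (adjoint) action.

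The key step is the computation ${\rm degree}(\mathrm{ad}(E_H))\,=\,0$. For $g\,\in\,{\rm GL}(k,\CC)$ the linear automorphism $\mathrm{Ad}(g)\colon X\,\mapsto\, gXg^{-1}$ of $\mathfrak{gl}(k,\CC)$ has determinant $\det(g)^{k}\cdot\det(g)^{-k}\,=\,1$, so ${\rm GL}(k,\CC)$ acts trivially on $\bigwedge^{k^{2}}\mathfrak{gl}(k,\CC)$. Therefore the determinant line bundle $\bigwedge^{k^{2}}\mathrm{ad}(E_H)$ is holomorphically trivial, so $c_1(\mathrm{ad}(E_H))\,=\,0$ and ${\rm degree}(\mathrm{ad}(E_H))\,=\,0$. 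Equivalently, writing $W\,=\,E_H\times^{{\rm GL}(k,\CC)}\CC^{k}$ for the associated rank $k$ bundle, one has $\mathrm{ad}(E_H)\,=\,\mathrm{End}(W)\,=\,W\otimes W^{*}$, whose determinant is $\mathcal O_M$.

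Now I would feed this into Theorem \ref{thm1}, whose conclusion ${\rm degree}(\mathcal N^{*}_{\mathcal F})-{\rm degree}(D)\,=\,{\rm degree}(\mathrm{ad}(E_H))$ becomes ${\rm degree}(D)\,=\,{\rm degree}(\mathcal N^{*}_{\mathcal F})$. Since the branching divisor $D$ is effective, ${\rm degree}(D)\,\geq\,0$, with equality if and only if $D\,=\,0$. For (i): if ${\rm degree}(\mathcal N^{*}_{\mathcal F})\,<\,0$ this would force ${\rm degree}(D)\,<\,0$, which is impossible for an effective divisor, so no transverse branched holomorphic affine connection can exist on $(M,\mathcal F)$. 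For (ii): if ${\rm degree}(\mathcal N^{*}_{\mathcal F})\,=\,0$ then ${\rm degree}(D)\,=\,0$, hence $D\,=\,0$, i.e.\ the branching divisor is trivial.

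The argument has essentially no obstacle beyond Theorem \ref{thm1}; the only point that genuinely needs care is the identification of the structure group of an affine connection (that it is ${\rm GL}(k,\CC)$ with $k$ the codimension of $\mathcal F$) together with the resulting vanishing ${\rm degree}(\mathrm{ad}(E_H))\,=\,0$, which is precisely what turns the general formula of Theorem \ref{thm1} into the clean equality ${\rm degree}(D)\,=\,{\rm degree}(\mathcal N^{*}_{\mathcal F})$ from which both statements follow immediately.
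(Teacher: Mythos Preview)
Your proof is correct and follows essentially the same route as the paper: identify the affine model $(G,H)=({\rm GL}(k,\CC)\ltimes\CC^k,\ {\rm GL}(k,\CC))$, show that ${\rm degree}({\rm ad}(E_H))=0$, plug into Theorem~\ref{thm1} to get ${\rm degree}(D)={\rm degree}(\mathcal N^*_{\mathcal F})$, and conclude using positivity of the degree of a nonzero effective divisor on a compact K\"ahler manifold. The only cosmetic difference is the justification of ${\rm degree}({\rm ad}(E_H))=0$: the paper argues via self-duality of $\mathfrak{gl}(k,\CC)$ as an $H$--module using the trace pairing $(A,B)\mapsto\mathrm{trace}(AB)$, whereas you compute the determinant of the adjoint action directly (equivalently observe ${\rm ad}(E_H)\cong\mathrm{End}(W)$); these are interchangeable standard arguments.
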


\begin{proof}
Recall that  the model of a transverse holomorphic affine connection on $(M,\, \mathcal F)$ is $(G,\,H)$, with 
$H\,=\,
\text{GL}(k, {\mathbb C})$ and $G\,=\, {\mathbb C}^k\rtimes\text{GL}(k, {\mathbb C})$.
The homomorphism $$\text{M}(k, {\mathbb C})\otimes
\text{M}(k, {\mathbb C})\, \longrightarrow\, \mathbb C\, ,\ \ A\otimes B\, \longmapsto\,
\text{trace}(AB)$$ is nondegenerate and $\text{GL}(k, {\mathbb C})$--invariant. In other words, 
the Lie algebra $\mathfrak h$ of $H\,=\, \text{GL}(k, {\mathbb C})$
is self-dual as an $H$--module. Hence we have $\text{ad}(E_H)\,=\, \text{ad}(E_H)^*$, in particular,
the equality $$\text{degree}(\text{ad}(E_H))\,=\,0$$ holds. Hence from
Theorem  \ref{thm1},
\begin{equation}\label{prc}
{\rm degree}({\mathcal N}^*_{\mathcal F})\,=\, {\rm degree}(D)\, .
\end{equation}

For  the effective divisor $D$ we have ${\rm degree}(D)\, \geq \, 0$. Moreover, for a {\it nonzero} effective divisor $D$ we have $\text{degree}(D)\, >\, 0$.
Therefore, the two points of the  corollary follow from \eqref{prc} and Theorem \ref{thm1}.
\end{proof}

\section{Some related open problems}

We present here some open questions dealing with holomorphic $G$-structures and holomorphic (foliated) Cartan 
geometries of compact complex manifolds.

\textbf{$\text{SL}(2,{\mathbb C})$--structures in odd dimension and holomorphic Riemannian metrics}

We have seen that any holomorphic $\text{SL}(2,{\mathbb C})$--structure on a complex manifold of odd dimension $M$ produces an associated  {\it holomorphic Riemannian metric } on $M$. Moreover, for the complex dimension three, these two structures are equivalent.
 
Exotic compact complex threefolds endowed with holomorphic Riemannian metrics (or equivalently, $\text{SL}(2,{\mathbb C})$--structures) were constructed by  Ghys  in \cite{Gh} using  deformations of quotients of 
$\text{SL}(2,{\mathbb C})$ by normal lattices. Moreover, Ghys proved in \cite{Gh} that all  holomorphic Riemannian metrics on those   exotic deformations  are  {\it locally homogeneous} (meaning that local holomorphic vector fields preserving the
holomorphic Riemannian metric on $M$ span the holomorphic tangent bundle $TM$).

It was proved in 
\cite{Du} that all holomorphic Riemannian metrics on compact complex threefolds  are locally 
homogeneous. This means that  any compact complex threefold  $M$ bearing a holomorphic Riemannian metric  admits a flat holomorphic Cartan geometry, or equivalently, a holomorphic $(G,X)$-structure  such that the holomorphic Riemannian metric on $M$ comes form
a global $G$-invariant holomorphic Riemannian metric on the model space $X$. The classification of all possible models $(G,X)$ was done in \cite{DZ} where it was deduced that all compact complex threefolds endowed with a (locally homogeneous)  holomorphic Riemannian metric 
also admit a finite unramified cover equipped with a holomorphic Riemannian metric of constant sectional curvature.
 
We conjecture that {\it ${\rm SL}(2,{\mathbb C})$--structures on compact complex manifolds of odd 
dimension are always locally homogeneous.} More generally we conjectured that {\it holomorphic Riemannian metrics on compact complex manifolds are locally homogeneous}. 

Of course, these conjectures generalize to the non K\"ahler framework, known results in the K\"ahler context (see  Theorem \ref{sl2} and also the  discussion below  about the Fujiki case).

Some evidence toward these conjectures  is given by the recent result  in \cite{BD2} proving that simply connected compact complex manifolds do not admit  holomorphic Riemannian metrics. Notice that in the locally homogeneous case, this 
would be a direct  consequence of the fact that the developing map $\text{dev}: M \longrightarrow X$ should be a submersion  from a compact manifold to a complex affine model: a contradiction.

It should be clarified  that the above mentioned conjectures are an important {\it  partial} step toward the classification of compact complex manifolds with holomorphic Riemannian metrics. Indeed, even    the classification of compact complex manifolds endowed
with {\it flat}  holomorphic Riemannian metrics is still an open problem. To understand the flat case one should prove first that $M$ is a quotient of the complex Euclidean space $(\CC^n, dz_1^2+ \ldots +dz_n^2)$  by a discrete subgroup of  the group of complex Euclidean motions 
$O(n, \CC) \ltimes \CC^n$ (this is a special holomorphic  version  of Markus conjecture
which asserts that compact manifolds  with unimodular affine structures, meaning here  locally modeled on $(G,X)$, with $X=\mathbb R^{2n}$  and  $G=SL(2n, \mathbb R) \ltimes \mathbb R^{2n}$ are complete:  they are quotients of the model space $X$ by a discrete subgroup of $G$.

The second step would be to classify discrete subgroups of $G=O(n, \CC) \ltimes \CC^n$ acting properly discontinuously and with a compact quotient on the model  $(\CC^n, dz_1^2+ \ldots +dz_n^2)$. Notice that a  special case of Auslander conjecture predicts that those subgroups are
virtually solvable (i.e., they admit a finite index subgroup which is solvable). More precisely, the general Auslander conjecture  states
 that {\it a  flat complete  unimodular affine compact manifold has virtually solvable fundamental group.} Notice that  compactness is essential in the statement of Auslander conjecture; non compact flat complete unimodular affine manifolds  with non abelian free fundamental group were constructed by Margulis.

\textbf{Holomorphic affine connections  on Fujiki class $\mathcal C$ manifolds}

Recall that a compact K\"ahler manifold bearing a holomorphic connection in its holomorphic tangent bundle $TM$ has 
vanishing real Chern classes \cite{At}. We have seen that this result can be obtain following Chern-Weil theory and 
computing real Chern classes of the holomorphic tangent bundle $TM$ first using a hermitian metric on it and then 
the holomorphic connection. This provides representatives of the Chern class $c_k(TM) \in \text{H}^{2k}(M, \mathbb 
R)$ which are smooth forms on $M$ of type $(k,k)$ (given by the first computation) and other of type $(2k,0)$ (when 
computed via the holomorphic connection). Classical Hodge theory says that on K\"ahler manifolds nontrivial real 
cohomology classes do not have representatives of different types. This implies the vanishing of the real Chern 
class: $c_k(TM)=0,$ for all $k$. This part of the argument directly adapts to compact complex manifolds manifolds 
which are bimeromorphic to K\"ahler manifolds; those manifolds were studied in \cite{Fu} and are now said to be 
{\it in the Fujiki class $\mathcal C$.} All images of compact K\"ahler manifolds through holomorphic maps are known 
to belong to the Fujiki class $\mathcal C$ \cite{Va}.

We have also seen that a compact K\"ahler manifold $M$ with vanishing first two Chern classes ($c_1(TM)=c_2(TM)=0$) 
admits a finite unramified covering by a compact complex torus \cite{IKO,Ya}.
 
We conjecture that the same is true for Fujiki class $\mathcal C$ manifolds, namely {\it a compact complex manifold 
in the Fujiki class $\mathcal C$ having vanishing first two Chern classes admits a finite unramified covering by a 
compact complex torus.}

In particular, we conjecture that {\it a complex compact manifold $M$ in the Fujiki class $\mathcal C$ bearing a 
holomorphic connection in its holomorphic tangent bundle $TM$ admits a finite unramified covering by a compact 
complex torus.} This was proved in \cite[Proposition 4.2]{BD5} for {\it Moishezon manifolds} (i.e. compact complex 
manifolds bimeromorphic to projective manifolds) \cite{Mo}. Also this was proved in \cite[Theorem C]{BDG} for 
holomorphic Riemannian metrics on Fujiki class $\mathcal C$ manifolds under some technical assumption (which could 
probably be removed).

\textbf{$\text{GL}(2,{\mathbb C})$--structures on compact K\"ahler manifolds of odd dimension}
 
Recall that   complex manifolds of odd dimension  bearing  a $\text{GL}(2,{\mathbb C})$--structure also admit a holomorphic conformal structure. Flat conformal structures on 
compact projective manifolds were classified in \cite{JR2}: beside some projective surfaces, there are only 
the standard examples.
 
As for the conclusion in Theorem \ref{Fano} and in Theorem \ref{KE}, we conjecture  that {\it K\"ahler 
manifolds of odd complex dimension $\geq 5$ and bearing a holomorphic $\text{GL}(2,{\mathbb 
C})$--structure are covered by compact tori.}

\textbf{Holomorphic Cartan geometries on  compact complex surfaces}

In several papers  Inoue, Kobayashi and Ochiai  studied holomorphic affine and projective connections on compact complex surfaces. A consequence of their work is  that compact complex surfaces bearing holomorphic affine 
connections (respectively, holomorphic projective connections) also admit {\it flat } holomorphic affine connections (respectively, flat holomorphic 
projective 
connections) with corresponding injective developing map. In particular, those complex surfaces are uniformized as quotients of open subsets in the 
complex affine plane (respectively, complex projective plane) by a discrete subgroup of affine transformations (respectively, projective 
transformations)  
acting 
properly and discontinuously \cite{IKO, KO1,KO2}. 

We conjecture that those results can be  generalized to all holomorphic Cartan geometries; namely, that compact complex surfaces bearing  holomorphic Cartan geometries with model $(G,\,H)$ also admit {\it flat}  holomorphic
Cartan geometries with model $(G,\,H)$ and with corresponding injective developing map into $G/H$. This would  provide uniformizations result for compact complex surfaces bearing holomorphic Cartan geometries with model $(G,\,H)$ as compact quotients of open subsets $U$ in $G/H$ by discrete subgroups in $G$ preserving $U \subset G/H$ and acting properly and discontinuously on $U$.

In order to address this problem locally (in the neighborhood of a Cartan geometry in the deformation space), the deformation theory  for holomorphic  (non necessarily flat)  Cartan geometries was recently  worked out in \cite{BDS}.

One could  also naturally ask the analogous  question   in the framework of {\it branched}  holomorphic Cartan geometries on compact complex surfaces. Recall that the branched framework is much broader since all projective surfaces admit
 branched flat holomorphic projective connections \cite{BD}. Moreover,  branched torsion free  holomorphic affine connections on compact complex surfaces which are {\it non  projectively flat } were constructed in \cite{BD}: these branched affine connections are 
 essential (meaning they are not obtained as pull-back of unbranched ones by a ramified holomorphic map) since it is known that (unbranched) torsion free  holomorphic affine connections on compact complex surfaces are {\it projectively flat} \cite{Du2}.

\textbf{Foliated Cartan geometries on compact complex tori}

This question is about holomorphic foliations on complex torii with transverse holomorphic Cartan geometry.

Our motivation for this question came  from Ghys  classification of   codimension one  holomorphic  nonsingular foliations on   complex tori \cite{Gh2} which be briefly describe here.  

I)  The simplest examples of codimension one  holomorphic foliations are those given by the kernel of some holomorphic 1-form  $\omega$.  Since holomorphic one-forms on complex tori are  necessarily translation invariant, the foliation
given by the kernel of $\omega$  is  also translation invariant.

II) Assume now that $T=\CC^{n}/ \Lambda$ , with $\Lambda$ a lattice in  $\CC^{n}$ such that  there exists a linear form $L : \CC^{n} \longrightarrow \CC$ sending $\Lambda$  to a lattice $\Lambda'$ in $\CC$.  Then $L$  descends
to a map $\widehat{L}  : T \longrightarrow  \CC/ \Lambda'$. Pick a nonconstant meromorphic function  $u$  on the elliptic curve $\CC/ \Lambda'$ and consider the meromorphic closed 1-form $\Omega=\widehat{L}^{*}(udz) + \omega$  on 
$T$, with $\omega$ as above and $dz$ a uniformizing  holomorphic 1-form on $\CC/ \Lambda'$ . It is easy to check  that the foliation given by the kernel of $\Omega$ extends to all of $T$ as a nonsingular  holomorphic codimension one foliation. This foliation is not invariant by all translations in $T$, but only by those which lie the kernel of $L$. 
They act on $T$ as a subtorus of symmetries of codimension one.

Ghys's theorem asserts that all  codimension one (nonsingular)  holomorphic  foliations on complex tori are constructed as in  I) or II) above. In particular, they are invariant by a subtorus of complex codimension one.
Moreover, for generic complex torii, there are no nonconstant meromorphic functions and, consequently the construction II) does not apply. All holomorphic codimension one foliations on generic  tori  are translation invariant.

We recently proved in \cite{BD6}  that all holomorphic  Cartan geometries  on complex tori are translation invariant.

We conjecture that the foliated analogous also holds, namely that {\it all holomorphic  foliations bearing  transverse holomorphic Cartan geometries on complex tori are translation invariant.}

\section*{Acknowledgements}

We warmly thank Charles Boubel who kindly explained to us the geometric construction  of the noncompact dual $D_3$ of the   three dimensional quadric $Q_3$ presented in Section \ref{section G-struct}.
This work has been supported by the French government through the UCAJEDI Investments in the 
Future project managed by the National Research Agency (ANR) with the reference number 
ANR2152IDEX201. The first-named author is partially supported by a J. C. Bose Fellowship, and 
school of mathematics, TIFR, is supported by 12-R$\&$D-TFR-5.01-0500. 


\end{document}